\pgfplotsset{compat=1.15}
\theoremstyle{plain}
	\newtheorem{Theo}{Theorem}[section] 
	\newtheorem{Prop}[Theo]{Proposition}        
	\newtheorem{Lem}[Theo]{Lemma}            
	\newtheorem{Conj}[Theo]{Conjecture}
\theoremstyle{definition}
	\newtheorem{Def}[Theo]{Definition}
\theoremstyle{remark}
	\newtheorem{Rema}[Theo]{Remark}
\def\NN{{\mathbb N}}    
\def\ZZ{{\mathbb Z}}     
\def\RR{{\mathbb R}}    
\def\QQ{{\mathbb Q}}    
\def\CC{{\mathbb C}}    
\def\HH{{\mathbb H}^2}    
\def\OP{{\mathcal P}}
\title{Connection points on double regular polygons}
\author{Julien Boulanger
}
\address{\flushleft{Centro de Modelamiento matemático,\newline University of Chile,\newline
Beaucheff 851, Santiago, Chile}}
\email{jboulanger@cmm.uchile.cl}
\date{\today}
\begin{document}

\maketitle

\begin{abstract}
In this paper we study connection points on the double regular $n$-gon translation surface, for $n \geq 7$ odd and its staircase model. For $n \neq 9$, we provide a large family of points with coordinates in the trace field that are not connection points. This family includes the central points, and for $n=7$ we conjecture that all the remaining points are connection points. Further, in the case where $n \geq 7$ is a prime number, we provide a constructive proof by exhibiting an explicit separatrix passing through a central point that does not extend to a saddle connection.
\end{abstract}

\section{Introduction and statement of the results}
The present paper lies at the crossroads between geometric topology and number theory. We study properties of the so-called \emph{Hecke groups} and use a \emph{modulo two reduction} method from \cite{Bo73, HMTY} to derive explicit information on periodic directions for a family of translation surfaces (the surfaces affinely equivalent to a double regular $n$-gon, with $n \geq 7$ odd), and use them to study connection points. As a consequence we also obtain an interesting property for billiards in rational polygons: If a trajectory from the center of a regular $n$-gon goes through a vertex, then, if $n \geq 7$ is odd, the reversed trajectory (starting from the center but with the opposite direction) does not necesarily go to a vertex (although it is the case for even $n$, and for $n=3,5$). \newline

Let $n \geq 7$ be an odd integer, and let $X_n$ be the double regular $n$-gon translation surface, obtained from first gluing two regular polygons along a side, then identifying the other sides by pairs, respecting the parallelism. The purpose of this paper is to study \emph{periodic directions} and \emph{connection points} on $X_n$. A direction on a translation surface is (completely) periodic if every geodesic in this direction is either periodic or is a saddle connection, that is, a geodesic between two singularities. A connection point on a translation surface $X$ is by definition a non-singular point of $X$ such that any geodesic from a singularity through this point extends to a saddle connection. Such points have been introduced by P.~Hubert and T.~Schmidt \cite{HS04} who gave a construction of translation surfaces with infinitely generated Veech groups as branched covers over non-periodic connection points.

It is a consequence of a result by C.~McMullen \cite{Mc06} (see also \cite{Bo88} in the setting of interval exchange transformations) that such points exist in a Veech surface whose \emph{trace field} is quadratic: the connection points are exactly the points with coordinates in the trace field (after a natural normalization). However, there is no such result in higher degree, neither concerning connection points nor about infinitely generated Veech groups. More, we do not know any single example of a non-periodic connection point on a translation surface whose trace field has degree three or greater over $\QQ$. In this paper we study one of the easiest family of non-quadratic surfaces, the double regular $n$-gons for $n \geq 7$ odd, whose trace field is of degree $\frac{1}{2} \varphi (2n)$ over $\QQ$, where $\varphi$ is Euler's totient function. It is a consequence of the work of Arnoux and Schmidt \cite{AS09} that for the double regular $n$-gon with $n \geq 7$, there are points with coordinates in the trace field that are not connection points. In the quest of finding non-periodic connection points, the central points of the double $n$-gon (which are not periodic) seem to be good candidates, but the author proves in \cite{Bo20} that central points of the double heptagon and the double nonagon are not connection points. Hovewer, the method presented in \cite{Bo20} does not extend to the case of polygons with more sides. The main purpose of this paper is to provide a new proof of this result, which generalizes to $n \geq 11$. Namely:

\begin{Theo}\label{theo:central_points}
The central points of the double regular $n$-gon are not connection points for $n \geq 7$ odd.
\end{Theo}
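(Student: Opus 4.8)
The plan is to reduce the statement to producing, through a central point, a \emph{single} separatrix whose direction is not periodic, and then to certify this non-periodicity by a reduction modulo $2$ of the Veech group. Recall first that $X_n$ is a Veech surface, so the Veech dichotomy applies: every direction on $X_n$ is either completely periodic or uniquely ergodic, and a direction carrying a saddle connection is not minimal, hence completely periodic. Since $X_n$ is moreover a lattice surface, its completely periodic directions are exactly those fixed by some parabolic element of the Veech group $\Gamma=\mathrm{SL}(X_n)$; here $\Gamma$ is the Hecke group $H_n$, a $(2,n,\infty)$ triangle group, which has a single cusp, so these directions form a single $\Gamma$-orbit in $\mathbb P^1(\RR)$. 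Consequently, if $c$ is a central point of $X_n$ and $\gamma$ is a separatrix from a singularity through $c$, then $c$ fails to be a connection point as soon as the direction of $\gamma$ is not parabolic: for then that direction is uniquely ergodic and carries no saddle connection, so $\gamma$ cannot extend to one.

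For $\gamma$ I would take the prolongation of the segment joining a vertex of the polygon to the center $c$. Since $n$ is odd, this segment reaches the midpoint of the opposite side, and one then continues it through the side identifications of $X_n$. Working in the staircase model --- where the coordinates involved lie transparently in the trace field $K_n=\QQ(2\cos\tfrac{\pi}{n})$, of degree $\tfrac12\varphi(2n)$ over $\QQ$ --- one reads off the slope of $\gamma$ as an explicit element $\sigma_n\in\mathbb P^1(K_n)$. On the other side, the set $\mathcal P\subset\mathbb P^1(K_n)$ of periodic slopes equals the orbit $\Gamma\cdot s_0$ with $s_0=\infty$ (the parabolic generator $T=\bigl(\begin{smallmatrix}1&\lambda_n\\0&1\end{smallmatrix}\bigr)$, $\lambda_n=2\cos\tfrac{\pi}{n}$, fixes $\infty$). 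Now fix a prime $\mathfrak p$ of $\mathcal O_{K_n}$ above $2$, with residue field $k_{\mathfrak p}$. Since $H_n$ has entries in $\ZZ[\lambda_n]\subseteq\mathcal O_{K_n}$ and determinant $1$, reduction modulo $\mathfrak p$ sends $\Gamma$ onto a subgroup $\bar\Gamma\le\mathrm{PGL}_2(k_{\mathfrak p})$, and the reduction map $s\mapsto\bar s$ on $\mathfrak p$-integral slopes intertwines the $\Gamma$-action with the $\bar\Gamma$-action on $\mathbb P^1(k_{\mathfrak p})$. Hence the reduction $\overline{\mathcal P}$ lies inside the orbit $\bar\Gamma\cdot\bar s_0$, which is finite and effectively computable, and --- this is the crux --- a \emph{proper} subset of $\mathbb P^1(k_{\mathfrak p})$. (In characteristic $2$ one has $\bar T^2=I$, so $\bar\Gamma$ is generated by the two involutions $\bar S$ and $\bar T$, hence is finite dihedral with a correspondingly small orbit of $\bar s_0$; for $n=7$, for instance, $2$ is inert in $\mathcal O_{K_n}$, so $k_{\mathfrak p}=\mathbb F_8$ and $\mathbb P^1(k_{\mathfrak p})$ has only nine points.)

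It then remains to compute $\bar\sigma_n\in\mathbb P^1(k_{\mathfrak p})$, the reduction of the central separatrix slope, and to verify that $\bar\sigma_n\notin\bar\Gamma\cdot\bar s_0$. Granting this, the direction of $\gamma$ is not parabolic, hence not completely periodic, so $\gamma$ does not extend to a saddle connection and $c$ is not a connection point. The affine automorphism of $X_n$ with derivative $-I$ interchanges the two polygons, hence the two central points, so treating one of them suffices; and since the argument is uniform in $n$, the theorem follows for every odd $n\geq 7$.

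I expect the main obstacle to be the two arithmetic ingredients of the reduction step: determining the splitting of $2$ in $\mathcal O_{K_n}$ together with the residue field $k_{\mathfrak p}$, and then controlling the reduced Veech group $\bar\Gamma\le\mathrm{PGL}_2(k_{\mathfrak p})$ precisely enough to compute the finite orbit $\bar\Gamma\cdot\bar s_0$ and to confirm that $\bar\sigma_n$ avoids it. By contrast, following the central separatrix through the identifications of the staircase model to obtain $\sigma_n$, and performing the matrix reductions modulo $\mathfrak p$, should be routine once $\mathfrak p$ has been fixed.
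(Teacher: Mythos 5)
Your overall strategy---find one separatrix through the center whose direction is certified non-periodic by reduction modulo two---is the right general idea and is indeed how the paper's constructive variant (Theorem \ref{theo:prime}) works. But your specific choice of separatrix is fatally wrong. The prolongation of the segment from a vertex to the center continues to the midpoint of the opposite side of the $n$-gon. By the Apisa--Saavedra--Zhang classification cited in the paper, the midpoints of the sides are exactly the periodic points of $X_n$, and periodic points are automatically connection points; hence \emph{every} separatrix through that midpoint extends to a saddle connection. Concretely, using the central symmetry exchanging the two polygons, your trajectory continues from that midpoint through the center of the other polygon and terminates at its opposite vertex: it is a saddle connection, its direction is completely periodic, and the verification you defer ($\bar\sigma_n\notin\bar\Gamma\cdot\bar s_0$) would come out the wrong way. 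This is not a repairable computation but a wrong choice of witness. Relatedly, your claim that ``the argument is uniform in $n$'' is unsupported: even for the carefully chosen direction in Theorem \ref{theo:prime}, the paper needs genuine arithmetic in $\ZZ[\lambda]$ (the shape of the minimal polynomial of $2\cos\frac{\pi}{n}$ for prime $n$) to verify the mod-two obstruction, and that same direction actually \emph{is} periodic for $n=9$ and $n=15$. For $n=9$ no mod-two argument can work at all, since $\overline{H_9}\cdot[\overline{1}:\overline{0}]$ is all of $\mathbb{P}^1(\overline{\mathcal{O}})$; the paper disposes of $n=9$ by citing \cite{Bo20}.

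Two further points. First, you reduce modulo a single prime $\mathfrak p$ above $2$; the paper reduces modulo the full ideal $2\mathcal{O}$, and its counting argument ($|\overline{H_n}\cdot[\overline{1}:\overline{0}]|\le n < 2^{\frac12\varphi(2n)}+1\le|\mathbb{P}^1(\overline{\mathcal{O}})|$) can fail over a single residue field when $2$ splits (e.g.\ for $n=17$ the residue field is $\mathbb{F}_{16}$, so $|\mathbb{P}^1(k_{\mathfrak p})|=17=n$, and properness of the orbit is no longer guaranteed by counting). Second, the paper's actual proof of Theorem \ref{theo:central_points} (via Theorem \ref{theo:non_connexion_impair}) is deliberately \emph{non-constructive}: rather than fixing a separatrix in advance, it studies the orbit of the central point under the affine group, shows that every residue class modulo two of the $\ZZ[\lambda]$-coordinates is realized by a dense set of orbit points (Propositions \ref{prop:dense_reduction} and \ref{prop:density_one_reduction}, using twists in cylinders where the point has irrational height), and then picks an orbit point in the unit square whose coordinates reduce to a class outside $\overline{p}^{-1}(\overline{H_n}\cdot[\overline{1}:\overline{0}])$; the segment from the origin to that point is the desired non-extendable separatrix. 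If you want a constructive argument, you must follow Section \ref{sec:the_separatrix}: choose a direction through the center that decomposes as a sum of explicit holonomy vectors and whose image in the staircase model reduces to $[\overline{\lambda^2+1}:\overline{1}]$, and then prove Proposition \ref{prop:lambda2}.
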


In fact, we provide a large family of points whose coordinates lie in the trace field and which are not connection points. Namely,

\begin{Theo}\label{theo:non_connexion_impair}
Let $n \geq 7$ odd. Assume that $n \neq 9$. Let $\lambda = 2 \cos \left( \frac{\pi}{n} \right)$ and let $P_0$ be a point on the staircase model of the double regular $n$-gon whose coordinates\footnote{Obtained from the developing map after setting the development of a singular point to the origin, see Section \ref{sec:staircase_model} and Remark \ref{rema:coordinates}.} are of the form $\frac{1}{N}(x,y)$, where $x,y \in \ZZ[\lambda]$ and $N \geq 1$ is an odd integer. Then $P_0$ is not a connection point.
\end{Theo}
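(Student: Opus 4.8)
The strategy is to exploit the "modulo two reduction" technique referenced in the introduction (following \cite{Bo73, HMTY}) together with the staircase model of $X_n$, in order to exhibit, for each candidate point $P_0$ with coordinates in $\frac{1}{N}\ZZ[\lambda]$ with $N$ odd, a separatrix through $P_0$ that fails to close up into a saddle connection. Concretely, I would first recall that the Veech group of $X_n$ is (up to finite index / conjugation) the Hecke group $H_n$ generated by a parabolic $T = \begin{pmatrix} 1 & \lambda \\ 0 & 1\end{pmatrix}$ in the horizontal direction and a rotation; periodic directions correspond, via the $H_n$-action, to the orbit of the horizontal (or vertical) direction. A direction $v$ fails to contain a saddle connection through $P_0$ precisely when the $H_n$-orbit computation, read through the developing map, never sends the separatrix endpoint to a singularity. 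The key arithmetic object is the reduction $\ZZ[\lambda] \to \ZZ[\lambda]/2\ZZ[\lambda] = \mathbb{F}_{2^d}$ (or a product of such fields), where $d = \tfrac12\varphi(2n)$; since $2$ is not a ramified/special prime for odd $n$ (this is where $n \neq 9$ will enter, as $9$ is exactly the case where the reduction degenerates), the image of the parabolic generators and of the coordinates of singular points is controlled.

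**Main steps.** (1) Write down explicitly the staircase model coordinates of the singularities of $X_n$ and verify they lie in $\ZZ[\lambda]$ (equivalently have $N=1$); this is essentially the normalization in the footnote and Remark \ref{rema:coordinates}. (2) Observe that any saddle connection through $P_0$, when developed, is a vector joining two points of $\ZZ[\lambda]^2$ (the developments of singularities) and passing through $\tfrac1N(x,y)$; such a vector has the form $\tfrac1N$ times an integer vector, so its *slope* is forced to lie in a restricted set. More usefully: the periodic directions on $X_n$ are exactly the $H_n$-images of the horizontal direction, and a direction is periodic iff its slope, written suitably, reduces mod $2$ in $\mathbb{F}_{2^d}$ to something in a distinguished "cusp" set. (3) Choose a separatrix emanating from a singularity and passing through $P_0$: parametrize the directions $\theta$ of separatrices through $P_0$ from a fixed singularity $Q$; generically this direction has slope in $\frac1N\ZZ[\lambda] \cup \{\infty\}$-combinations. (4) The crux: show that one can pick such a $\theta$ whose slope, under the mod $2$ reduction, lands *outside* the set of reductions of periodic directions — hence $\theta$ is not periodic, hence the separatrix in direction $\theta$ through $P_0$ does not extend to a saddle connection (if it did, $\theta$ would be a saddle-connection direction, which on a Veech surface is periodic). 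Because $N$ is odd, $\tfrac1N$ is invertible mod $2$, so the reduction of $P_0$'s coordinates is the reduction of $N^{-1}(x,y)$, a *well-defined* nonzero element that we are free to play against the finite set of "bad" (periodic) reductions; a counting argument over the finite field $\mathbb{F}_{2^d}$ shows the bad set is too small to contain all the eligible slopes when $n \neq 9$.

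**Main obstacle.** The delicate point is step (4): translating "direction of a separatrix through $P_0$" into a clean arithmetic statement about slopes modulo $2$, and then proving that the relevant set of reductions of *periodic* directions is a proper subset. This requires (a) an exact description of which elements of $\mathbb{P}^1(\mathbb{F}_{2^d})$ arise as mod-$2$ reductions of the $H_n$-orbit of the horizontal direction — i.e. understanding the image of $H_n \to \mathrm{PSL}_2(\mathbb{F}_{2^d})$ (or of the even congruence quotient) — and (b) checking that a separatrix from a singularity through $P_0$ can be taken in a direction whose reduction avoids that image. The exclusion of $n=9$ is expected to be exactly the arithmetic accident where this image is all of $\mathbb{P}^1$ (or the construction otherwise collapses). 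I would handle (a) by analyzing the group generated by the reductions of $T = \begin{pmatrix}1&\lambda\\0&1\end{pmatrix}$ and the rotation generator mod $2$, using that $\lambda \bmod 2$ generates $\mathbb{F}_{2^d}$ over $\mathbb{F}_2$, and (b) by a direct dimension/counting estimate; once both are in place, Theorem \ref{theo:central_points} follows as the special case where $P_0$ is the central point, which indeed has coordinates of the stated form with suitable odd $N$.
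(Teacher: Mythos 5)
Your overall framework is the right one -- the paper does indeed use the mod--two obstruction of Borho and Hanson--Merberg--Towse--Yudovina, together with the fact that $\overline{H_n}\cdot[\overline{1}:\overline{0}]$ is a proper subset of $\mathbb{P}^1(\overline{\mathcal{O}})$ for $n\neq 9$ (your guess that $n=9$ is excluded because the reduced orbit is everything is correct). But your step (4) contains a genuine gap, and it is exactly where all the work of the paper lives. You propose to fix the point $P_0$, look at the slopes of separatrices through $P_0$ emanating from a fixed singularity, and argue by ``a counting argument over the finite field'' that some such slope reduces outside the bad set. The slopes in question are of the form $[\overline{x}+\overline{a}:\overline{y}+\overline{b}]$ where $(a,b)$ ranges over the developments of singularities \emph{visible from $P_0$ along a straight segment in the surface}, and there is no a priori control over which classes $(\overline{a},\overline{b})$ this visibility constraint allows; the eligible set is not ``all of $\mathbb{P}^1(\overline{\mathcal{O}})$ minus something small'' by any counting estimate, and $\overline{\mathcal{O}}$ need not even be a field (the paper notes $n=17$ is the first odd case where it is not). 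So the crux is asserted, not proved.

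The paper circumvents this by moving $P_0$ rather than the separatrix: it considers the orbit $\OP$ of $P_0$ under the full affine group (which preserves the property of being a connection point and preserves coordinates in $\frac1N(\ZZ[\lambda]\times\ZZ[\lambda])$), and proves (Proposition \ref{prop:dense_reduction}) that for \emph{every} residue class $(u,v)\in\overline{\mathcal{O}}\times\overline{\mathcal{O}}$ the set of orbit points reducing to $(u,v)$ is nonempty and dense in $S_n$. This requires two nontrivial ingredients you do not have: (i) a non-periodic point has irrational height in some cylinder (Proposition \ref{prop:irrational_height}), so that \emph{double} twists in that cylinder -- which preserve the mod--two class precisely because $N$ is odd -- give density along a line, then a strip, then the whole surface; and (ii) the specific affine elements $\varphi_R$, $\varphi_S$ and $\varphi_T^2$ change the class by swapping coordinates, adding $(\overline{0},\overline{1})$, and adding $(\overline{\omega_i},\overline{0})$ respectively, and since the cylinder widths $\omega_i$ are monic polynomials in $\lambda$ of degree $i$ they generate $\ZZ[\lambda]$ as a $\ZZ$-module, so every class is reached. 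Once density is known, one picks an orbit point $Q$ inside the unit square $R_{\frac{n-1}{2}}$ whose class is outside $\overline{p}^{-1}(\overline{H_n}\cdot[\overline{1}:\overline{0}])$, and the straight segment from the origin to $Q$ is the desired non-extendable separatrix. Without an argument of this kind (or an explicit enumeration of visible singularities, which the paper only achieves for prime $n$ in Theorem \ref{theo:prime}), your proof does not close.
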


The central points of the double $n$-gon lie in this category, the denominator $N$ being in fact exactly $n$. Theorem \ref{theo:central_points} is then a corollary of Theorem \ref{theo:non_connexion_impair}. Our proof of Theorem \ref{theo:non_connexion_impair} relies on the fact that for $n = 7$ and $n \geq 11$, there is an obstruction \emph{modulo two} for a direction to be periodic in the double regular $n$-gon. In particular, this method does not work for $n=9$, for which the reduction modulo two does not give any relevant information (see Section \ref{sec:Hecke}). However, we already know from \cite{Bo20} that the central points of the double regular nonagon are not connection points.\newline

At this point, it might be tempting to ask, and rightly so, whether there are any non-periodic connection points on the double regular $n$-gon. We conjecture that such points exist for $n=7$:

\begin{Conj}\label{conj:connexion_points_heptagon_first}
There exist non-periodic connection points on the double regular heptagon.
\end{Conj}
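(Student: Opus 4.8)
We outline a possible line of attack. The plan is to produce an explicit candidate point $P_0$ and to reduce the conjecture to an arithmetic statement about which $\QQ(\lambda)$-rational directions of $X_7$ are periodic. Two preliminary observations shape the search. First, the Veech group of $X_7$ contains $-\mathrm{Id}$ (it is the Hecke group $H_7$), realised by an affine involution $\iota$; since $X_7$ has a single singularity $s$, if $Q$ is a fixed point of $\iota$ and $\gamma$ is a separatrix from $s$ through $Q$, then the continuation of $\gamma$ beyond $Q$ is the $\iota$-image of $\gamma$ run backwards, so it reaches $s$ again after the same length. Hence \emph{every} fixed point of $\iota$ is a connection point --- but $\iota$ is central in the affine group, so its fixed set is finite and affine-invariant, and these points are periodic: they do not help. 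Second, Theorem~\ref{theo:non_connexion_impair} already disposes of all points with an odd denominator. So the candidates are the points whose coordinates on the staircase model have denominator a power of two, the simplest being $P_0 = \tfrac12(x,y)$ with $x,y \in \ZZ[\lambda]$ not both even.

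For such a $P_0$, every separatrix from $s$ through $P_0$ has holonomy in the coset $\tfrac12(x,y) + L$, where $L \subset \ZZ[\lambda]^2$ is the $\ZZ[\lambda]$-module of absolute holonomy vectors; clearing the denominator, the direction of such a separatrix equals $[\,x + 2\ell_1 : y + 2\ell_2\,]$ for some $(\ell_1,\ell_2) \in L$, so its reduction modulo two is the \emph{fixed}, nonzero class $[x:y]$. If $(x,y)$ is chosen so that $[x:y]$ is a residue not obstructed by the reduction modulo two of Section~\ref{sec:Hecke} --- for instance the residue class of the horizontal direction --- then every separatrix through $P_0$ satisfies the necessary condition for its direction to be periodic. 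Two things then remain: (a) upgrading this to \emph{genuine} periodicity of each such direction, and (b) checking that $P_0$ is not a periodic point. For (b) one proceeds as for the central points: under the parabolic fixing the horizontal direction the orbit of $P_0$ is infinite because the height of $P_0$ in the horizontal cylinder decomposition is not of the special form stabilised by that parabolic, and $(x,y)$ can be chosen so that this holds. For (a), the plan is to use a renormalization algorithm on directions adapted to $H_7$ --- a continued-fraction induction \`a la Arnoux--Schmidt~\cite{AS09}, or the cutting-sequence formalism for the double heptagon --- with the properties that a direction is periodic if and only if its expansion is eventually periodic and that the reduction modulo two varies in a finite-state way along the induction, and then to prove that a $\QQ(\lambda)$-rational direction with admissible modulo-two class has an eventually periodic expansion.

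The main obstacle is step (a). The reduction modulo two is only a necessary condition for periodicity, and there is no reason it should be sufficient even among $\QQ(\lambda)$-rational directions: one may need to iterate the obstruction to higher powers of two (reductions modulo $4, 8, \dots$, or a single $2$-adic condition) and, worse, to prove that the resulting condition cuts out exactly the set of periodic directions. Obtaining such a complete arithmetic description is precisely what is at present out of reach for translation surfaces whose trace field has degree three over $\QQ$, which is why the statement is only a conjecture. A more modest --- and perhaps computer-assisted --- version of the strategy would forgo describing all periodic directions and work only with the (still infinite) set of directions realised by separatrices through one carefully chosen $P_0$, aiming to show by a self-similarity argument that this set consists of a single $H_7$-orbit of the horizontal direction together with finitely many exceptional directions, each verifiable by hand. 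Even in that form, proving that the renormalization terminates is the crux, and it is there that a genuinely new idea appears to be needed.
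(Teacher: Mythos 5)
This statement is a conjecture that the paper does not prove; it only reduces it (Section \ref{sec:Hecke}) to Conjecture \ref{conj:mod2hepta}, by observing that every separatrix through a point with coordinates $\frac{1}{2k}(x,y)$, with $x,y\in\ZZ[\lambda]$ not both even, has direction reducing modulo two to the fixed class $[\overline{x}:\overline{y}]$ — and your outline performs exactly this reduction, with the same candidate points, and correctly isolates the same missing ingredient (that the modulo-two obstruction is the \emph{only} obstruction to membership in $H_7\cdot\infty$) as the open crux. The only minor divergence is in your step (b): for non-periodicity of the candidate point you invoke a parabolic-orbit/height argument, whereas the paper can simply quote the Apisa--Saavedra--Zhang classification, by which the periodic points are exactly the finitely many edge midpoints.
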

A more precise conjecture is formulated in Section \ref{sec:Hecke} (Conjecture \ref{conj:connexion_points_heptagon}). This is due to the fact that, for $n=7$, the obstruction modulo two seems to be the only obstruction for a direction to be periodic (see Conjecture \ref{conj:mod2hepta} for a more precise statement). For $n = 9$ (and experimentally also for $n \geq 11$), this is not true anymore and there seem to be additional obstructions for being a periodic direction. We conjecture instead:

\begin{Conj}
For $n \geq 9$ odd, there are no non-periodic connection points on the double regular $n$-gon.
\end{Conj}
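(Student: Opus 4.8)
The plan is to prove the equivalent statement that, for $n\ge 9$ odd, every connection point of $X_n$ is a periodic point; the reverse inclusion (periodic points are connection points) holds on any Veech surface, so only this direction has content. By the Veech dichotomy a non-periodic direction on $X_n$ is minimal and hence carries no saddle connection, so a non-singular point $P$ is a connection point if and only if \emph{every} separatrix issued from a singularity through $P$ lies in a periodic direction. That a connection point must have coordinates in the trace field $K=\QQ(\lambda)$ is part of the general theory: two non-parallel separatrices through $P$ extend to saddle connections with holonomy in $K^2$ and so pin $P$ down to $K^2$, and the remaining case in which all separatrices through $P$ share a common direction is ruled out directly. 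One may thus write $P=\tfrac1N(x,y)$ with $x,y\in\ZZ[\lambda]$ and $N\ge1$ minimal, as in Theorem~\ref{theo:non_connexion_impair}. Since the Veech group is contained in $\mathrm{GL}_2(\ZZ[\lambda])$, the denominator $N$ is a Veech invariant of $P$; it therefore suffices to fix $N$ and show that $X_n$ has only finitely many connection points of denominator $N$, for then those points, being permuted by the Veech group, are fixed by a finite-index subgroup and hence periodic.

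\textbf{Congruence obstructions.} The core of the argument is to replace the modulo-two reduction of Section~\ref{sec:Hecke} by a \emph{complete} congruence description of the periodic directions. For each prime $p$ and each $k\ge1$ there is a reduction homomorphism from the Veech group to $\mathrm{PSL}_2(\ZZ[\lambda]/p^k\ZZ[\lambda])$, and the image of any periodic direction under it must lie in the orbit of the reduced cusp under the finite image group. For $n=7$ and $n\ge 11$ the case $p=2$, $k=1$ already produces a non-trivial obstruction, which is what drives Theorem~\ref{theo:non_connexion_impair}; for $n=9$ the image modulo $2$ is transitive on $\mathbb P^1(\ZZ[\lambda]/2\ZZ[\lambda])$ and useless, but the prime $p=3$ — which divides $9$, so that $\lambda^3-3\lambda-1$ is totally ramified at $3$ and $\ZZ[\lambda]/3\ZZ[\lambda]$ acquires nilpotents — is expected to have non-transitive image and to supply the missing obstruction. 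The decisive step, the analogue for $n\ge 9$ of Conjecture~\ref{conj:mod2hepta}, is to prove that these congruences are \emph{exhaustive}: a direction $[u:v]\in\mathbb P^1(K)$ is periodic if and only if, for every $p$ (only $p=2$ and the primes dividing $n$ being relevant) and every $k$, its reduction modulo $p^k$ lies in the cusp orbit.

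\textbf{From the congruence description to the conclusion.} Granting exhaustiveness, I would develop a singularity of $X_n$ to the origin and record the separatrices through $P$: their holonomy vectors are of the form $(x,y)-Nw$, where $w$ runs over the developed positions of singularities visible from $P$ along an unobstructed segment, a set which — after removing finitely many points — is a coset of a finite-index subgroup of the module of developed singularity positions. One then shows that, $P$ being non-periodic, this family is too spread out to be entirely congruence-good: at some level $p^k$ its reductions meet an orbit of $\mathrm{PSL}_2(\ZZ[\lambda]/p^k\ZZ[\lambda])$ on $\mathbb P^1$ disjoint from the cusp orbit. Any such member is a separatrix through $P$ in a non-periodic direction, so $P$ is not a connection point; the finitely many points of denominator $N$ escaping all these reductions are then periodic, as desired. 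The contrast with $n=7$ is precisely that there the single test $p=2$ leaves a large supply of non-periodic points all of whose separatrix directions reduce into the small ``good'' orbit, which is why Conjecture~\ref{conj:connexion_points_heptagon_first} predicts the existence of non-periodic connection points in that case.

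\textbf{Main obstacle.} The principal difficulty is the exhaustiveness statement — ruling out ``sporadic'' periodic directions that pass every congruence test. This is a congruence-subgroup / strong-approximation–type property for the non-arithmetic Hecke triangle group acting on $X_n$, currently supported only by computation; a proof would presumably require a careful study of the Hecke continued-fraction (boundary-expansion) algorithm certifying whether a direction is parabolic, together with the geometry of the cusp. A secondary, more technical, point is the visibility bookkeeping needed to guarantee that a bad-reduction direction is realised by an \emph{unobstructed} separatrix; this is where the exact value of $n$ re-enters, mirroring the hypothesis $n\neq 9$ in Theorem~\ref{theo:non_connexion_impair} and, in the opposite direction, Conjecture~\ref{conj:connexion_points_heptagon_first}.
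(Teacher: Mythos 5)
This statement is labelled as a \emph{conjecture} in the paper, and the paper offers no proof of it; so there is nothing to compare your argument against, and the only question is whether your proposal actually closes the problem. It does not: it is a research programme whose decisive step is explicitly left open. The ``exhaustiveness'' claim --- that a direction is periodic if and only if all of its reductions modulo prime powers lie in the cusp orbit --- is exactly a strong form of Rosen's cusp challenge, which the paper describes as a long-standing open problem solved only for $n=3,4,5,6,8,10,12$. You acknowledge this is ``currently supported only by computation,'' so the argument is conditional on an unproven hypothesis at its core, and the subsequent ``too spread out to be entirely congruence-good'' step is asserted rather than argued.

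Worse, two of your specific expectations are contradicted by the paper itself. First, your hope that the prime $p=3$ supplies a non-trivial obstruction for $n=9$ runs against Borho's theorem, recorded in the Remark of Section \ref{sec:outline_mod_2}: for every prime $p\neq 2$ the image of $H_n\cdot[1:0]$ in $\mathbb{P}^1(\mathcal{O}/p\mathcal{O})$ is \emph{all} of $\mathbb{P}^1(\mathcal{O}/p\mathcal{O})$, so the mod-$3$ reduction (at level $k=1$, ramified or not) gives no information; you would need to prove something new at higher level $p^k$, with no evidence offered. Second, the paper's discussion after Conjecture \ref{conj:connexion_points_heptagon_first} states that for $n=9$ and experimentally for $n\geq 11$ the congruence obstruction is \emph{not} the only obstruction to periodicity --- i.e., the very exhaustiveness you want to prove is expected to be false for the range $n\geq 9$ that the conjecture concerns. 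Indeed, the heuristic reason the paper conjectures \emph{no} non-periodic connection points for $n\geq 9$ is precisely that the periodic directions appear to be much sparser than any congruence class, which is the opposite of a complete congruence description. So the proposal is not a proof, and its intended mechanism points in the wrong direction for the case at hand.
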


\subsection*{An explicit separatrix for prime $n$.}

As we will shortly see, our proof of Theorem \ref{theo:central_points} is not constructive: it does not give a specific separatrix passing through the central point which does not extend to a saddle connection. In fact, if $n$ is a prime number, one can exhibit such a separatrix using elementary number theory. Namely:

\begin{Theo}\label{theo:prime}
Let $n \geq 7$ be a prime number. We consider the double regular $n$-gon represented on $\RR^2$ with the origin placed at the central point of one of the $n$-gons and with a vertex at the point of coordinates $(1,0)$. Then, the separatrix starting from the point of coordinates $\left(\cos \frac{2\pi}{n}, \sin \frac{2\pi}{n}\right)$ with direction 
\[ (X,Y) = \left(1+2 \cos \left(\frac{2\pi}{n}\right) \left(1 + \cos \frac{\pi}{n}\right), 2 \sin \left(\frac{2\pi}{n}\right)\left(1 - \cos \frac{\pi}{n}\right)\right), \]
passes through the origin (the central point of the right $n$-gon) and does not extend to a saddle connection. 
\end{Theo}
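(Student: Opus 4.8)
The plan is to prove the two assertions of Theorem~\ref{theo:prime} separately. First I would check, by an explicit unfolding, that the prescribed separatrix reaches the central point; then I would show that its direction is not periodic, which — since $X_n$ is a Veech surface and by the Veech dichotomy — is equivalent to the separatrix not extending to a saddle connection (in a non-periodic direction the directional flow is minimal, so there are no saddle connections at all, while in a periodic direction every separatrix lies on a cylinder boundary and hence does extend to one). The second step is where the number-theoretic input enters, and for $n$ prime it can be run by hand.

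\emph{Step 1: the separatrix meets the origin.} I would develop the geodesic ray issued from $V:=\left(\cos\frac{2\pi}{n},\sin\frac{2\pi}{n}\right)$ in the direction $(X,Y)$, unfolding it across the side-pairings of the double $n$-gon. Since $X>0$, $Y>0$ and this direction falls outside the polygon angle at $V$, the ray immediately crosses into the side-pairing neighbour of $V$; applying successively the relevant gluing translations one obtains, after a bounded number of steps, a developed broken line whose segments can be written in closed form in $\cos\frac{\pi}{n}$ and $\sin\frac{\pi}{n}$. The claim that the ray passes through $O$ then becomes the collinearity of the appropriate developed copy of the central point with the last such segment, i.e.\ a single trigonometric identity, which I would verify using $\lambda=2\cos\frac{\pi}{n}$, the recursion $2\cos\frac{(k+1)\pi}{n}=\lambda\cdot 2\cos\frac{k\pi}{n}-2\cos\frac{(k-1)\pi}{n}$ and $\sin\frac{k\pi}{n}=\sin\frac{\pi}{n}\,U_{k-1}\!\left(\cos\frac{\pi}{n}\right)$. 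Equivalently, and perhaps more cleanly, one may transport everything to the staircase model of Section~\ref{sec:staircase_model}, where the collinearity is a linear identity over $\QQ(\lambda)$.

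\emph{Step 2: the direction is not periodic.} I would pass to the staircase model: there is a fixed $A\in\mathrm{GL}_2(\RR)$ carrying $X_n$, with a chosen vertex developed to the origin, to the staircase model, whose relative periods lie in $\ZZ[\lambda]$, $\lambda=2\cos\frac{\pi}{n}$. A direct computation — using the Chebyshev polynomials once more, this time to cancel the factor $\sin\frac{2\pi}{n}$ appearing in $Y$ — shows that $A$ sends $(X,Y)$ to a direction with coordinates in $\QQ(\lambda)$; clearing the (odd, since $N=n$ for the central point and more generally by construction of the staircase model) common denominator yields $(p,q)\in\ZZ[\lambda]^2$, proportional to $A(X,Y)$ and generating the unit ideal. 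If the separatrix closed up into a saddle connection, $(p:q)$ would be a periodic direction of the staircase model, so, by the reduction modulo two of Section~\ref{sec:Hecke} (following \cite{Bo73,HMTY}), its class in $\mathbb P^1(\ZZ[\lambda]/2\ZZ[\lambda])$ would lie in the orbit $\mathcal C$ of the cusp under the image of the Veech group $\Gamma$ in $\mathrm{PSL}_2(\ZZ[\lambda]/2\ZZ[\lambda])$. For $n$ prime, $\ZZ[\lambda]/2\ZZ[\lambda]$ is the \'etale $\mathbb F_2$-algebra $\prod_{j=1}^{g}\mathbb F_{2^{f}}$ with $fg=\tfrac{n-1}{2}$ and $f$ the order of $2$ in $(\ZZ/n\ZZ)^{\times}/\{\pm1\}$ — because $\bar\lambda=\bar\zeta+\bar\zeta^{-1}$ for $\bar\zeta$ a primitive $n$-th root of unity in characteristic two, and primality of $n$ pins down the factorization of the minimal polynomial of $\lambda$ modulo two — and in each factor the reduced image of $\Gamma$ is a dihedral group of order $2n$ (both standard generators become involutions modulo two, and their product has trace $\bar\lambda$, hence order $n$). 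Thus $\mathcal C$ is fully explicit and has at most $n$ elements per component, namely the classes $(u_i:u_{i-1})$, $0\le i<n$, with $u_{-1}=0$, $u_0=1$, $u_{i+1}=\bar\lambda u_i+u_{i-1}$. The theorem then amounts to checking that $\bar p\,u_{i-1}-\bar q\,u_i\neq 0$ in $\ZZ[\lambda]/2\ZZ[\lambda]$ for every $i$, a finite verification performed from the explicit value of $(p,q)$.

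The crux is the very last point of Step 2: making the modulo-two obstruction actually bite. The Veech group $\Gamma$ in staircase coordinates is only conjugate to the standard Hecke group, so to describe $\mathcal C$ correctly one must keep track of the relevant conjugating matrix modulo two; with a careless normalization the reduced direction lands spuriously inside $\mathcal C$ and one concludes nothing. Primality of $n$ is essential precisely here: it forces the order-$n$ elliptic generator of $\Gamma$ to survive modulo two, which is what keeps the reduced image dihedral of order $2n$ and $\mathcal C$ small and explicit — and it is also why the argument is constructive for prime $n$ whereas the general odd case of Theorem~\ref{theo:non_connexion_impair} requires the coarser, non-constructive version of the reduction. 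Step~1, by contrast, is a routine if somewhat lengthy unfolding computation.
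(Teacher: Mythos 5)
Your overall architecture is the same as the paper's: verify that the separatrix hits the center, transport its direction to the staircase model, reduce modulo two, and show the reduced class lies outside the orbit of the cusp under the reduced Hecke group. Step 1 is indeed a routine computation (the paper does it by writing $(X,Y)$ as a sum of three explicit holonomy vectors), and your reduction of Step 2 to showing $[\overline{p}:\overline{q}]\notin\overline{H_n}\cdot[\overline{1}:\overline{0}]$ is exactly right, with the orbit described by the recursion $P_{i+1}=\lambda P_i+P_{i-1}$ just as in the paper.

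The genuine gap is in the very last step, which you dismiss as ``a finite verification performed from the explicit value of $(p,q)$.'' For any \emph{fixed} prime this is indeed a finite check, but the theorem is a statement about \emph{all} primes $n\geq 7$, so one needs an argument uniform in $n$. This is the actual mathematical content of the paper's proof (its Proposition \ref{prop:lambda2}): after computing that the staircase direction is $[-1-3\lambda+\lambda^2+\lambda^3:-1+\lambda+\lambda^2]$, hence equivalent under $TST^{-1}$ to a class reducing to $[\overline{\lambda^2+1}:\overline{1}]$, one must show this class avoids all $n$ elements $[\overline{P_{i\pm1}(\lambda)}:\overline{P_i(\lambda)}]$ of the orbit simultaneously for every prime. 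The paper does this by expressing everything in the basis $1,\lambda,\dots,\lambda^{\frac{p-3}{2}}$ of $\overline{\mathcal{O}}$ and comparing coefficients, which requires knowing the two top non-leading coefficients of the minimal polynomial of $\lambda$ (computed from $\Phi_{2p}(X)=\sum_{k=0}^{p-1}(-1)^kX^k$, where primality is used) and the top coefficients of the $P_i$, followed by a four-case analysis near the top degree. None of this is present in your proposal, and it is not automatic. Relatedly, you locate the role of primality in the wrong place: the reduced group $\overline{H_n}$ is dihedral and the orbit has at most $n$ elements for \emph{every} odd $n$ (this is Borho's result, used in the paper for all $n$); what primality actually buys is that $[\QQ[\lambda]:\QQ]=\frac{n-1}{2}$, so the $P_i(\lambda)$ for $i\leq\frac{n-1}{2}$ are already minimal expressions and the coefficient comparison is clean. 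Your concern about a conjugating matrix is also moot: the Veech group of the staircase model is the Hecke group on the nose, not merely up to conjugation.
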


Another interest of this alternative proof is that it gives, for prime $n \geq 7$, an explicit set of directions in the trace field that are not periodic directions. See Proposition \ref{prop:lambda2}
for a more precise formulation.\newline

In light of Theorem \ref{theo:prime}, one could wonder whether the same separatrix extends to a saddle connection if $n$ is not a prime number, and the answer is false: this separatrix extends to a saddle connection for $n = 9$ and $n=15$. However, it could also be checked on the computer that this separatrix does not extend to a saddle connection\footnote{Or, more precisely, its direction is not periodic modulo two; see Section \ref{sec:Hecke}.} for $17 \leq n \leq 199$, independently of $n$ being prime or not.

\subsection*{Remark: billiard trajectories in a regular $n$-gon}
It is well known that translation surfaces are powerful tools to study billiard trajectories, using the unfolding procedure described in \cite{FK36,KaZe}. For a regular $n$-gon, $n \geq 3$ odd, the unfolding procedure gives rise to a necklace surface described for example in \cite{DL18}, which is a degree $n$ cover of the double regular $n$-gon, and for which in particular periodic directions coincide with those on the double regular $n$-gon. As a consequence one can interpret Theorem \ref{theo:central_points} as an anwser to the following question:\newline

\textit{Given $n \geq 3$ and an ideal billiard trajectory on the regular $n$-gon starting at the very center of the polygon, assume the trajectory reaches a vertex. Is it true that the billard trajectory starting from the center of the polygon but in the opposite direction also reaches a vertex ?}\newline

The answer is obviously yes when $n$ is even, by symmetry. For odd $n$, this is equivalent to the fact that the central points of the double regular $n$-gon are (or not) connection points, which we know is true for $n=3,5$, but, from Theorem \ref{theo:central_points}, is false for $n \geq 7$.

\subsection*{Outline of the proof of Theorem \ref{theo:non_connexion_impair}}
In order to prove that a given point is not a connection point, the key idea is to find an explicit separatrix passing through this point whose direction is not periodic. By a result of Veech \cite{Ve89}, periodic directions coincide with eigendirections of parabolic matrices of the so-called Veech group. The Veech group of the \emph{staircase model of the double $n$-gon} for odd $n\geq 3$ is the Hecke group of order $n$, denoted here $H_n$, and hence determining the periodic directions on the double $n$-gon is equivalent to determining the orbit of $\infty \in \partial \HH$ under the action of $H_n$ on the boundary at infinity of the hyperbolic plane. This problem, sometimes reffered to as \emph{Rosen's cusp challenge}, have been studied by various authors, see \cite{Rosen1954, Leu67, Leu74, Bo73, BoRo, SS95, HMTY} among others. In \cite{Bo20}, we were able to certify that certain directions are not in the orbit of $\infty$ by showing that their \emph{next-integer Hecke continued fraction expansion} are eventually periodic, which is equivalent to saying that the direction is fixed by an hyperbolic element of the Veech group. Although this method works well for the double heptagon and the double nonagon, it doesn't seem to help for polygons with more sides as starting from the double hendecagon we were not able to find any hyperbolic direction in the trace field (see also Remark 9 of \cite{HMTY}). In this latter case, it turns out that looking at directions in the trace field \emph{modulo two} gives an easily computable obstruction for a direction to be periodic, see \cite{Bo73,BoRo,HMTY}. We use this obstruction in our proof of Theorem \ref{theo:non_connexion_impair}: given $P_0$ as in the statement of the theorem, we study its orbit under the action of the group of affine diffeomorphisms and we show, using appropriate twists, that every possible reduction modulo two appears in the orbit, and more, corresponds to a dense set of elements in the surface (Proposition \ref{prop:dense_reduction}). This will imply that there are separatrices passing through $P_0$ whose direction is not periodic because they reduce \emph{modulo two} to a direction which is not periodic.

\subsection*{Organization of the paper.} 
We first provide a short introduction to translation surfaces in Section \ref{sec:translation_surfaces}, and we describe the staircase model of the double regular $n$-gon. Next, we recall useful background on Hecke groups in Section \ref{sec:Hecke} and we explain the \emph{obstruction modulo two} for an element to be in $H_n \cdot \infty$, from \cite{Bo73, HMTY}. Then, we prove Theorem \ref{theo:non_connexion_impair} in Section \ref{sec:non_connexion_impair}, and finally Theorem \ref{theo:prime} in Section \ref{sec:prime}.

\subsection*{Acknowledgements.}
The author is grateful to R.~Gutiérrez-Romo, E.~Lanneau and D.~Massart for discussions related to the content of this paper and for comments on a preliminary version of this text. The author thanks the IDEX Université Grenoble Alpes for funding his mobility grant. This work was supported by Centro de Modelamiento Matemático (CMM) BASAL fund FB210005 for center of excellence from ANID-Chile.

\section{Translation surfaces and their Veech groups}\label{sec:translation_surfaces}
We begin with a short background on translation surfaces and their Veech groups, mainly following \cite{Bo20}. The interested reader could also check out the books \cite{DHV_book, AM_book}.

A (finite) translation surface is the data of a (finite) collection of polygons embedded in $\CC$ and a pairing of the sides, where each side pairing must respect the length and the parallelism. From this data, translation surfaces inherit a flat metric and a finite number of singularities, whose angles are integer multiples of $2\pi$. In particular, geodesics are piecewise straight lines (that may change direction only at singularities), and we can make the following definitions:

\begin{Def}
A \emph{separatrix} is a geodesic line starting at a singularity.\newline
A \emph{saddle connection} is a geodesic line starting and ending at a singularity.\newline
A \emph{connection point} on a translation surface $X$ is a non-singular point $P$ of $X$ such that every separatrix passing through $P$ extends to a saddle connection.
\end{Def}

As said in the introduction, connection points on translation surfaces have been introduced by P.~Hubert and T.~Schmidt \cite{HS04} in order to construct translation surfaces with infinitely generated Veech groups.\newline

\paragraph{\textbf{Moduli space and Veech groups}}
The moduli space of translation surfaces is defined as the set of equivalence classes of translation surfaces up to the "cut-and-paste" equivalence relation. Namely, two translation surfaces represented by polygons and gluings are considered equivalent if one can cut the polygons of the first translation surface and rearrange the pieces (respecting the identifications) to obtain the second translation surface.

The action of $GL_2^+(\RR)$ on $\RR^2$ provides an action on the polygons, which in turn induces an action on the moduli space of translation surfaces. Two surfaces are said \emph{affinely equivalent} if they lie in the same orbit, and the stabilizer of a given translation surface $X$ is called the \emph{Veech group} of $X$, which we will denote by $SL(X)$. In particular, affinely equivalent surfaces have conjugated Veech groups. To an element $M$ of the Veech group of $X$ corresponds (at least) an \emph{affine diffeomorphism} of the surface obtained from applying $M$ and then cutting and pasting back to the original surface using translations\footnote{There may be several ways to cut and paste back to the original surface, but it is not the case for the surfaces we study here.}. Veech groups are named after W. Veech, who showed that they are discrete subgroups of $SL_2(\RR)$ and studied some of their properties: one of their main features is that they capture geometric and dynamical information about the geodesic (directional) flow, namely

\begin{Theo}[\cite{Ve89}]\label{theo:Veech}
Let $X$ be a translation surface such that $SL(X)$ is a lattice. Then for every direction $\theta \in \mathbb{S}^1$, the directionnal flow in direction $\theta$ is either completely periodic (every trajectory is either periodic or a saddle connection) or uniquely ergodic. 

Further, the first case arises if and only if $\theta$ is the eigendirection of a parabolic matrix of $SL(X)$.
\end{Theo}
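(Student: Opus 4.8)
Since $SL(X)$ is a lattice, the $SL_2(\RR)$-orbit of $X$ in its stratum of translation surfaces is closed and carries a finite $SL_2(\RR)$-invariant measure, so it may be identified with the finite-volume homogeneous space $SL_2(\RR)/SL(X)$. The plan is to encode the flow in direction $\theta$ by the associated Teichmüller geodesic ray in this space: after conjugating $X$ by a rotation so that $\theta$ becomes the vertical direction, this ray is $t \mapsto g_t \cdot X$ with $g_t = \mathrm{diag}(e^{t/2},e^{-t/2})$, and its asymptotic behaviour controls the dynamics of the vertical flow via \emph{Masur's criterion}: if $g_t \cdot X$ returns infinitely often to some fixed compact subset of the moduli space, then the vertical flow on $X$ is uniquely ergodic. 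This is the main analytic ingredient, and I would take it as a known input.

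Next I would run the following dichotomy on the ray $g_t \cdot X$ inside $SL_2(\RR)/SL(X)$. A lattice in $SL_2(\RR)$ has only finitely many cusps, each a quotient of a horoball by a maximal parabolic subgroup, and the complement of a suitable family of disjoint cusp neighbourhoods is compact. Hence the ray either (i) meets this compact \emph{thick part} for a sequence of times tending to infinity, or (ii) eventually leaves every compact set, in which case --- being connected and eventually contained in the disjoint union of the cusp neighbourhoods --- it must eventually enter and remain inside a single cusp neighbourhood. In case (i), Masur's criterion gives that the flow in direction $\theta$ is uniquely ergodic. In case (ii), a geodesic ray that stays forever in the cusp attached to a parabolic subgroup $\langle P\rangle$ has its forward endpoint on $\partial\HH$ equal to the fixed point of $P$; undoing the rotation, this says precisely that $\theta$ is the eigendirection of the parabolic element $P\in SL(X)$.

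It remains to check the converse implication: if $\theta$ is the eigendirection of a parabolic $P\in SL(X)$, then the flow in direction $\theta$ is completely periodic (hence not uniquely ergodic, since a single cylinder already carries a one-parameter family of invariant measures, so the two cases are genuinely exclusive). Here I would use the affine automorphism $\phi$ of $X$ with derivative $P$. Since $P$ is parabolic, $\phi$ has infinite order and preserves the direction $\theta$, so by Thurston's classification of surface homeomorphisms it is reducible --- it cannot be periodic (its derivative is not), nor pseudo-Anosov (its derivative is parabolic, not hyperbolic) --- and one shows that its canonical reduction system is realized by simple closed geodesics in direction $\theta$. This exhibits $X$ as a union of flat cylinders in direction $\theta$, on each of which $\phi$ acts as a nonzero power of the Dehn twist; consequently every leaf of the flow in direction $\theta$ is either the core of such a cylinder (a closed geodesic) or one of the saddle connections forming the cylinder boundaries. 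Combined with the dichotomy above, this gives the asserted equivalence.

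The step I expect to be genuinely hard is Masur's criterion: it is a substantial theorem on its own, and it is exactly what upgrades the almost-everywhere recurrence coming from Poincaré's theorem on the finite-volume space $SL_2(\RR)/SL(X)$ to a statement valid for \emph{every} direction $\theta$. By contrast, the cylinder-decomposition half is soft and geometric; if one wanted to avoid Thurston's classification, it can be replaced by a direct analysis of the dynamics of $\phi$ on a single leaf of the foliation in direction $\theta$.
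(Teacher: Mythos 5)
The paper does not prove this statement: it is quoted verbatim from Veech \cite{Ve89} and used as a black box, so there is no internal proof to compare yours against. Judged on its own terms, your sketch is the standard \emph{modern} proof of the Veech dichotomy and its overall architecture is sound: identify the orbit with the finite-volume space $SL_2(\RR)/SL(X)$, run the recurrent/divergent dichotomy for the Teichm\"uller ray, apply Masur's criterion in the recurrent case, and read off a parabolic fixed point of $\partial\HH$ in the divergent case. Two remarks. First, a historical one: Masur's criterion postdates \cite{Ve89}, so this is genuinely not Veech's original route to unique ergodicity; but since you flag it as a known input, that is a legitimate modularization rather than a gap. Second, the one step you assert rather than prove --- that the canonical reduction system of the affine map $\phi$ with parabolic derivative ``is realized by simple closed geodesics in direction $\theta$'' --- is where the actual content of the converse lives, and routing it through Thurston's classification makes it look softer than it is. The clean standard argument avoids Thurston entirely: replace $\phi$ by a power fixing each singularity and each separatrix in direction $\theta$; since $D\phi$ acts trivially on vectors in direction $\theta$, this power fixes each such separatrix \emph{pointwise}; if some separatrix were dense in a minimal component of the $\theta$-foliation, the power would be the identity there, contradicting $D\phi\neq I$; hence the foliation is periodic and the surface decomposes into cylinders. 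You gesture at exactly this alternative in your last sentence, so I would simply promote it to the main argument. With that substitution the proof is complete modulo Masur's criterion.
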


Because of this result, translation surfaces whose Veech group is a lattice are usually referred to as \emph{Veech surfaces}. A consequence of this result is that in every periodic direction, a Veech surface is decomposed into \emph{cylinders}: A horizontal cylinder $C(\omega,h)$ is an euclidean annulus of the form $[0, \omega] \times (0,h)$ where the right and left boundary are identified: $\forall x \in (0,h), (0,x) \sim (\omega,x)$. The parameters $\omega$ and $h$ are respectively called the width and the height of the cylinder $C(\omega,h)$, and the \emph{modulus} of the cylinder is defined as the ratio $\omega/h$. Additionally, the vector $(\omega,0)$ can be reffered to as the \emph{holonomy vector} of a core curve of the cylinder $C$. Now, an open subset $C$ of the translation surface $X$ is a cylinder in the direction $\theta$ if there exist parameters $\omega,h$ such that $e^{i\theta}C$ is isomorphic to $C(\omega,h)$. The holonomy vector of a core curve of $C$ has now the form $(\omega_C^x,\omega_C^y) := (\omega \cos \theta, \omega \sin \theta)$. A decomposition into cylinders is a finite partition of the surface into (parallel) cylinders, except a finite number of saddle connections.

It is in general a difficult question to characterize explicitly the parabolic limit set of a given lattice fuschian group, and hence to characterize the set of periodic directions for a given Veech surface. It is a classical result that, after a normalization, periodic directions on $X$ have slope belonging to the \emph{trace field} of $SL(X)$, which is the number field generated by the traces of the elements of $SL(X)$. It has been shown by Kenyon and Smillie \cite{KS00} that the trace field has degree at most $g$, the genus of $X$, over $\QQ$. And it is a consequence of the work of C.~McMullen \cite{Mc04b,Mc06} that, if the trace field of $SL(X)$ is quadratic over $\QQ$, then the converse of the above is true: every direction in the trace field is a periodic direction (or equivalently an eigendirection of a parabolic matrix of the Veech group of $X$). See also \cite{Bo88} in the setting of interval exchange transformations. However, the converse is not true in general, and very little is known for periodic directions in translation surfaces whose trace field has degree three or more over $\QQ$. As a consequence, we do not know a single example of a non-periodic\footnote{A non singular point on a translation surface is \emph{periodic} if its orbit under the group of affine diffeomorphisms is finite, see the last paragraph of this section for details.} connection point. The double regular $n$-gon for $n \geq 7$, which we now introduce, is a natural example of Veech surface for which we can try to understand periodic directions.

\subsection{The double regular $n$-gon and its staircase model}\label{sec:staircase_model}

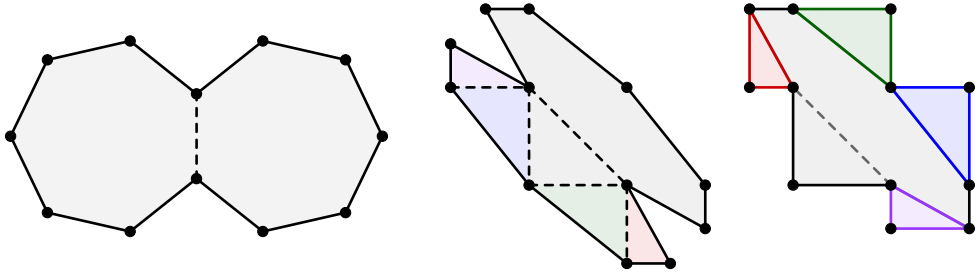
\begin{figure}
\center
\definecolor{ccqqqq}{rgb}{0.8,0,0}
\definecolor{qqwuqq}{rgb}{0,0.39215686274509803,0}
\definecolor{qqqqff}{rgb}{0,0,1}
\definecolor{zzttff}{rgb}{0.6,0.2,1}
\definecolor{wwwwww}{rgb}{0.4,0.4,0.4}
\begin{tikzpicture}[line cap=round,line join=round,>=triangle 45,x=1cm,y=1cm,scale=1.3]
\clip(-5.5,-1) rectangle (5,2);
\fill[line width=1pt,fill=black,fill opacity=0.05] (-1.5,0.5) -- (-1.8765101981412664,1.28183148246803) -- (-2.7225209339563143,1.474927912181824) -- (-3.400968867902419,0.9338837391175587) -- (-3.400968867902419,0.06611626088244243) -- (-2.722520933956315,-0.4749279121818234) -- (-1.8765101981412668,-0.2818314824680298) -- cycle;
\fill[line width=1pt,fill=black,fill opacity=0.05] (-3.400968867902419,0.06611626088244243) -- (-3.400968867902419,0.9338837391175587) -- (-4.079416801848524,1.474927912181824) -- (-4.925427537663571,1.2818314824680304) -- (-5.301937735804838,0.5) -- (-4.925427537663571,-0.281831482468029) -- (-4.079416801848524,-0.47492791218182295) -- cycle;
\fill[line width=1pt,color=wwwwww,fill=wwwwww,fill opacity=0.1] (0,1) -- (1,0) -- (1.8019377358048383,-0.4450418679126291) -- (1.8019377358048383,0) -- (1,1) -- (0,1.8019377358048383) -- (-0.4450418679126291,1.8019377358048383) -- cycle;
\fill[line width=1pt,color=zzttff,fill=zzttff,fill opacity=0.1] (-0.8019377358048383,1.445041867912629) -- (-0.8019377358048383,1) -- (0,1) -- cycle;
\fill[line width=1pt,color=qqqqff,fill=qqqqff,fill opacity=0.1] (-0.8019377358048383,1) -- (0,1) -- (0,0) -- cycle;
\fill[line width=1pt,color=wwwwww,fill=wwwwww,fill opacity=0.1] (0,1) -- (0,0) -- (1,0) -- cycle;
\fill[line width=1pt,color=qqwuqq,fill=qqwuqq,fill opacity=0.1] (0,0) -- (1,0) -- (1,-0.8019377358048383) -- cycle;
\fill[line width=1pt,color=ccqqqq,fill=ccqqqq,fill opacity=0.1] (1,0) -- (1,-0.8019377358048383) -- (1.445041867912629,-0.8019377358048383) -- cycle;
\fill[line width=1pt,color=wwwwww,fill=wwwwww,fill opacity=0.1] (3.7,0) -- (2.7,1) -- (2.254958132087371,1.8019377358048383) -- (2.7,1.8019377358048383) -- (3.7,1) -- (4.501937735804838,0) -- (4.501937735804838,-0.4450418679126291) -- cycle;
\fill[line width=1pt,color=ccqqqq,fill=ccqqqq,fill opacity=0.1] (2.254958132087371,1.8019377358048383) -- (2.254958132087371,1) -- (2.7,1) -- cycle;
\fill[line width=1pt,color=wwwwww,fill=wwwwww,fill opacity=0.1] (2.7,1) -- (2.7,0) -- (3.7,0) -- cycle;
\fill[line width=1pt,color=qqwuqq,fill=qqwuqq,fill opacity=0.1] (2.7,1.8019377358048383) -- (3.7,1.8019377358048383) -- (3.7,1) -- cycle;
\fill[line width=1pt,color=qqqqff,fill=qqqqff,fill opacity=0.1] (3.7,1) -- (4.501937735804838,1) -- (4.501937735804838,0) -- cycle;
\fill[line width=1pt,color=zzttff,fill=zzttff,fill opacity=0.1] (3.7,0) -- (3.7,-0.4450418679126291) -- (4.501937735804838,-0.4450418679126291) -- cycle;
\draw [line width=1pt] (-1.8765101981412664,1.28183148246803)-- (-2.7225209339563143,1.474927912181824);
\draw [line width=1pt] (-2.7225209339563143,1.474927912181824)-- (-3.400968867902419,0.9338837391175587);
\draw [line width=1pt] (-3.400968867902419,0.06611626088244243)-- (-2.722520933956315,-0.4749279121818234);
\draw [line width=1pt] (-2.722520933956315,-0.4749279121818234)-- (-1.8765101981412668,-0.2818314824680298);
\draw [line width=1pt] (-1.8765101981412668,-0.2818314824680298)-- (-1.5,0.5);
\draw [line width=1pt] (-3.400968867902419,0.9338837391175587)-- (-4.079416801848524,1.474927912181824);
\draw [line width=1pt] (-4.079416801848524,1.474927912181824)-- (-4.925427537663571,1.2818314824680304);
\draw [line width=1pt] (-4.925427537663571,1.2818314824680304)-- (-5.301937735804838,0.5);
\draw [line width=1pt] (-1.5,0.5)-- (-1.86,1.26);
\draw [line width=1pt] (-5.301937735804838,0.5)-- (-4.925427537663571,-0.281831482468029);
\draw [line width=1pt] (-4.925427537663571,-0.281831482468029)-- (-4.079416801848524,-0.47492791218182295);
\draw [line width=1pt] (-4.079416801848524,-0.47492791218182295)-- (-3.400968867902419,0.06611626088244243);
\draw [line width=1pt,dash pattern=on 3pt off 3pt] (0,1)-- (1,0);
\draw [line width=1pt,dash pattern=on 3pt off 3pt] (-3.4,0.95)-- (-3.4,0.05);
\draw [line width=1pt] (1,0)-- (1.8019377358048383,-0.4450418679126291);
\draw [line width=1pt] (1.8019377358048383,-0.4450418679126291)-- (1.8019377358048383,0);
\draw [line width=1pt] (1.8019377358048383,0)-- (1,1);
\draw [line width=1pt] (1,1)-- (0,1.8019377358048383);
\draw [line width=1pt] (0,1.8019377358048383)-- (-0.4450418679126291,1.8019377358048383);
\draw [line width=1pt] (-0.4450418679126291,1.8019377358048383)-- (0,1);
\draw [line width=1pt] (0,1)-- (-0.8019377358048383,1.445041867912629);
\draw [line width=1pt] (-0.8019377358048383,1.445041867912629)-- (-0.8019377358048383,1);
\draw [line width=1pt] (-0.8019377358048383,1)-- (0,0);
\draw [line width=1pt] (0,0)-- (1,-0.8019377358048383);
\draw [line width=1pt] (1,-0.8019377358048383)-- (1.445041867912629,-0.8019377358048383);
\draw [line width=1pt] (1.445041867912629,-0.8019377358048383)-- (1,0);
\draw [line width=1pt,dash pattern=on 3pt off 3pt] (-0.8019377358048383,1)-- (0,1);
\draw [line width=1pt,dash pattern=on 3pt off 3pt] (0,1)-- (0,0);
\draw [line width=1pt,dash pattern=on 3pt off 3pt] (0,0)-- (1,0);
\draw [line width=1pt,dash pattern=on 3pt off 3pt] (1,0)-- (1,-0.8019377358048383);
\draw [line width=1pt,color=wwwwww] (2.7,1)-- (2.254958132087371,1.8019377358048383);
\draw [line width=1pt] (2.254958132087371,1.8019377358048383)-- (2.7,1.8019377358048383);
\draw [line width=1pt,color=wwwwww] (2.7,1.8019377358048383)-- (3.7,1);
\draw [line width=1pt,color=wwwwww] (3.7,1)-- (4.501937735804838,0);
\draw [line width=1pt] (4.501937735804838,0)-- (4.501937735804838,-0.4450418679126291);
\draw [line width=1pt,color=wwwwww] (4.501937735804838,-0.4450418679126291)-- (3.7,0);
\draw [line width=1pt,color=ccqqqq] (2.254958132087371,1.8019377358048383)-- (2.254958132087371,1);
\draw [line width=1pt,color=ccqqqq] (2.254958132087371,1)-- (2.7,1);
\draw [line width=1pt,color=ccqqqq] (2.7,1)-- (2.254958132087371,1.8019377358048383);
\draw [line width=1pt] (2.7,1)-- (2.7,0);
\draw [line width=1pt] (2.7,0)-- (3.7,0);
\draw [line width=1pt,dash pattern=on 3pt off 3pt,color=wwwwww] (3.7,0)-- (2.7,1);
\draw [line width=1pt,color=qqwuqq] (2.7,1.8019377358048383)-- (3.7,1.8019377358048383);
\draw [line width=1pt,color=qqwuqq] (3.7,1.8019377358048383)-- (3.7,1);
\draw [line width=1pt,color=qqwuqq] (3.7,1)-- (2.7,1.8019377358048383);
\draw [line width=1pt,color=qqqqff] (3.7,1)-- (4.501937735804838,1);
\draw [line width=1pt,color=qqqqff] (4.501937735804838,1)-- (4.501937735804838,0);
\draw [line width=1pt,color=qqqqff] (4.501937735804838,0)-- (3.7,1);
\draw [line width=1pt,color=zzttff] (3.7,0)-- (3.7,-0.4450418679126291);
\draw [line width=1pt,color=zzttff] (3.7,-0.4450418679126291)-- (4.501937735804838,-0.4450418679126291);
\draw [line width=1pt,color=zzttff] (4.501937735804838,-0.4450418679126291)-- (3.7,0);
\begin{scriptsize}
\draw [fill=black] (-1.5,0.5) circle (1.5pt);
\draw [fill=black] (-1.8765101981412664,1.28183148246803) circle (1.5pt);
\draw [fill=black] (-2.7225209339563143,1.474927912181824) circle (1.5pt);
\draw [fill=black] (-3.400968867902419,0.9338837391175587) circle (1.5pt);
\draw [fill=black] (-3.400968867902419,0.06611626088244243) circle (1.5pt);
\draw [fill=black] (-2.722520933956315,-0.4749279121818234) circle (1.5pt);
\draw [fill=black] (-1.8765101981412668,-0.2818314824680298) circle (1.5pt);
\draw [fill=black] (-4.079416801848524,1.474927912181824) circle (1.5pt);
\draw [fill=black] (-4.925427537663571,1.2818314824680304) circle (1.5pt);
\draw [fill=black] (-5.301937735804838,0.5) circle (1.5pt);
\draw [fill=black] (-4.925427537663571,-0.281831482468029) circle (1.5pt);
\draw [fill=black] (-4.079416801848524,-0.47492791218182295) circle (1.5pt);
\draw [fill=black] (0,0) circle (1.5pt);
\draw [fill=black] (1,0) circle (1.5pt);
\draw [fill=black] (0,1) circle (1.5pt);
\draw [fill=black] (1,1) circle (1.5pt);
\draw [fill=black] (1.8019377358048383,0) circle (1.5pt);
\draw [fill=black] (0,1.8019377358048383) circle (1.5pt);
\draw [fill=black] (-0.4450418679126291,1.8019377358048383) circle (1.5pt);
\draw [fill=black] (1.8019377358048383,-0.4450418679126291) circle (1.5pt);
\draw [fill=black] (2.7,0) circle (1.5pt);
\draw [fill=black] (3.7,0) circle (1.5pt);
\draw [fill=black] (2.7,1) circle (1.5pt);
\draw [fill=black] (3.7,1) circle (1.5pt);
\draw [fill=black] (4.501937735804838,0) circle (1.5pt);
\draw [fill=black] (4.501937735804838,1) circle (1.5pt);
\draw [fill=black] (3.7,1.8019377358048383) circle (1.5pt);
\draw [fill=black] (2.7,1.8019377358048383) circle (1.5pt);
\draw [fill=black] (2.254958132087371,1) circle (1.5pt);
\draw [fill=black] (2.254958132087371,1.8019377358048383) circle (1.5pt);
\draw [fill=black] (3.7,-0.4450418679126291) circle (1.5pt);
\draw [fill=black] (4.501937735804838,-0.4450418679126291) circle (1.5pt);
\draw [fill=black] (-0.8019377358048383,1) circle (1.5pt);
\draw [fill=black] (1,-0.8019377358048383) circle (1.5pt);
\draw [fill=black] (1.445041867912629,-0.8019377358048383) circle (1.5pt);
\draw [fill=black] (-0.8019377358048383,1.445041867912629) circle (1.5pt);
\end{scriptsize}
\end{tikzpicture}
\caption{From the double regular heptagon to its staircase model.}
\label{fig:heptagon_and_staircase}
\end{figure}

Given $n \geq 3$ odd, we consider the double regular $n$-gon $X_n$, made from two copies of a regular $n$-gon glued along a side, and whose other sides are identified by pairs. From $X_n$ one can apply the matrix
\[ 
P = \frac{1}{\sin \frac{(n-1)\pi}{2n}} \begin{pmatrix} \sin \frac{\pi}{n} & - \cos \frac{\pi}{n} + 1 \\ \sin \frac{\pi}{n} & \cos \frac{\pi}{n} + 1 \end{pmatrix}.
\]

and then cut and paste the obtained polygonal representation to obtain a staircase surface $S_n$. This construction is detailled in \cite{Ho12}. See Figure \ref{fig:heptagon_and_staircase} in the case $n=7$.

More precisely, the staircase model $S_n$ can be made of a chain of rectangles $R_i$, $1 \leq i \leq n-2$ where the sides of $R_i$ have length 
\[ \frac{\sin\frac{i\pi}{n}}{\sin \frac{(n-1)\pi}{2n}} \text{ and } \frac{\sin\frac{(i+1)\pi}{n}}{\sin\frac{(n-1)\pi}{2n}},
\]
and are respectively identified to the sides of $R_{i-1}$ and $R_{i+1}$ (the small sides of $R_1$ (resp. $R_{n-2}$) are identified together). There are two advantages of normalizing\footnote{One could also use the normalization by $\sin(\frac{\pi}{n})$ (and we implictly use it in the proof of Theorem \ref{theo:prime}), but it will be more convenient for the proof of Theorem \ref{theo:non_connexion_impair} to normalize by $\sin \left(\frac{(n-1)\pi}{2n}\right)$.} by $\sin \frac{(n-1)\pi}{2n}$: first, $R_{\frac{n-1}{2}}$ is a unit square, but also the lengths of the sides of $R_i$ can then be expressed as a polynomial in $\lambda = \lambda_n := 2 \cos \frac{\pi}{n}$ (we will often omit the index $n$). The latter is a consequence of the following well known fact, which can be proved by a direct computation (see for example \cite[Proposition 2.4]{Da13}):

\begin{Prop}\label{prop:cylinders}
The staircase model or the regular $n$-gon has a horizontal (resp. vertical) cylinder decomposition with $g = \frac{n-1}{2}$ cylinders. Further, every cylinder has modulus $\lambda$.
\end{Prop}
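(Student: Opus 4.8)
The plan is to work directly with the explicit staircase description of $S_n$ as the chain of rectangles $R_1,\dots,R_{n-2}$ given just above: I would read off the horizontal cylinders from the combinatorics of the chain, compute each modulus via a single sum‑to‑product identity, and then deduce the vertical case from the palindromic symmetry of the chain. Throughout, write $g:=\frac{n-1}{2}$, and let $a_i:=\tfrac{\sin(i\pi/n)}{\sin((n-1)\pi/2n)}$ be the normalized length appearing in the description of $S_n$, so that $R_i$ has sides $a_i$ and $a_{i+1}$, with $a_i=a_{n-i}$ and $a_{(n-1)/2}=1$.

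First I would record the one piece of geometric input taken from the construction in \cite{Ho12}: consecutive rectangles $R_i,R_{i+1}$ are glued along their common side of length $a_{i+1}$, and — after fixing the orientation so that $R_1$ and $R_2$ are glued along a \emph{vertical} side — this common side is vertical precisely when $i$ is odd and horizontal when $i$ is even; moreover the two short sides of $R_1$ (resp.\ of $R_{n-2}$), both of length $a_1=a_{n-1}$, are identified with one another, being horizontal for $R_1$ and vertical for $R_{n-2}$ (here the parity of $n$, through that of $n-3$ and $n-4$, is used). Consequently, grouping the rectangles into maximal runs glued along vertical sides partitions $S_n$, in the horizontal direction, into the cylinders $C_j:=R_{2j-1}\cup R_{2j}$ for $1\le j\le g-1$ together with the single rectangle $C_g:=R_{n-2}$, whose two (now vertical) short sides, glued to one another, already close it into a horizontal cylinder. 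Since $n-2=2g-1$, this exhausts all the rectangles and produces exactly $g=\frac{n-1}{2}$ horizontal cylinders.

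It then remains to compute $\operatorname{mod}(C_j)$ for each $j$. For $1\le j\le g-1$ the two rectangles of $C_j$ are glued along their common vertical side of length $a_{2j}$, so $C_j$ has height $h=a_{2j}$ and width $\omega=a_{2j-1}+a_{2j+1}$; hence, using $\sin x+\sin y=2\sin\frac{x+y}{2}\cos\frac{x-y}{2}$,
\[
\operatorname{mod}(C_j)=\frac{a_{2j-1}+a_{2j+1}}{a_{2j}}=\frac{\sin\frac{(2j-1)\pi}{n}+\sin\frac{(2j+1)\pi}{n}}{\sin\frac{2j\pi}{n}}=\frac{2\sin\frac{2j\pi}{n}\cos\frac{\pi}{n}}{\sin\frac{2j\pi}{n}}=2\cos\frac{\pi}{n}=\lambda .
\]
For the last one, $C_g=R_{n-2}$ has width $a_{n-2}$ and height $a_{n-1}$, and
\[
\operatorname{mod}(C_g)=\frac{a_{n-2}}{a_{n-1}}=\frac{\sin\frac{2\pi}{n}}{\sin\frac{\pi}{n}}=2\cos\frac{\pi}{n}=\lambda .
\]
This settles the horizontal decomposition. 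For the vertical one I would either repeat the argument verbatim (the runs glued along horizontal sides are $R_{2j}\cup R_{2j+1}$ together with $R_1$), or, more quickly, invoke the isometry of $S_n$ that swaps the two coordinate axes while reversing the chain, $R_i\mapsto R_{n-1-i}$ — well defined thanks to $a_i=a_{n-i}$ — which conjugates the horizontal cylinder decomposition to the vertical one; either way one obtains $g$ vertical cylinders, each of modulus $\lambda$.

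The only non‑formal step, and hence the main obstacle, is pinning down the combinatorics of the staircase: that consecutive rectangles alternate between vertical and horizontal gluings, and that in each direction exactly one of the two extreme rectangles closes up on its own. This is where one genuinely uses the description from \cite{Ho12} together with the parity of $n$; once that picture is fixed, the rest is the one‑line trigonometric identity above and the bookkeeping check $n-2=2g-1$, which guarantees that "$g-1$ pairs plus one singleton" really accounts for all $n-2$ rectangles and gives the claimed count $g$.
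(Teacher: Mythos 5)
Your proof is correct and follows the route the paper intends: the cylinder combinatorics you describe (pairs $R_{2j-1}\cup R_{2j}$ plus one extreme rectangle closing up on itself, and the axis-swapped version for the vertical direction) match the staircase pictured in the paper for $n=11$, and the sum-to-product identity is exactly the ``direct computation'' the paper alludes to. Note that the paper does not actually prove this proposition --- it only cites Davis [Da13, Proposition 2.4] --- so your write-up supplies the omitted details along essentially the same lines.
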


In particular, using that $R_{\frac{n-1}{2}}$ is a unit square, one can express the coordinates of every side as a polynomial in $\lambda$: for $0 \leq i < \frac{n-1}{2}$ the rectangle $R_{\frac{n-1}{2} \pm i}$ has sides of length $Q_i(\lambda)$ and $Q_{i+1}(\lambda)$ where
\[ Q_0(X) = Q_1(X) = 1 \text{ and, for }i \geq 1, Q_{i+1}(X) = XQ_i(X) - Q_{i-1}(X). \]

As a consequence, if we consider the coordinates on $S_n$ given by the embedding in $\CC$ of the staircase model where the origin is set at the bottom left corner of $R_{\frac{n-1}{2}}$ (see Figure \ref{fig:coordinates_staircase}), then every vertex has coordinates in $\ZZ[\lambda]$, and the image of the right central point of the double regular $n$-gon is a barycenter of $n$ vertices, so that its coordinates are of the form $\frac{1}{n}(x,y)$, with $x,y \in \ZZ[\lambda]$.

\begin{Rema}\label{rema:coordinates}
Although it will turn out to be convenient to work with the coordinates obtained from the plane template of Figure \ref{fig:coordinates_staircase}, one could choose other coordinates obtained from the developing map of $S_n$ after fixing the image of a singularity at the origin, and Theorem \ref{theo:non_connexion_impair} would still hold.
\end{Rema}

Further, the horizontal cylinder $C_i$ defined for $1 \leq i \leq \frac{n-1}{2}$ as the union $R_{\frac{n-1}{2}+i-1} \cup R_{\frac{n-1}{2}+i}$ if $i$ is odd (setting $R_{n-1} = \varnothing$ if $\frac{n-1}{2}$ is odd) and the union $R_{\frac{n-1}{2}-i} \cup R_{\frac{n-1}{2}-i+1}$ if $i$ is even (setting $R_0 = \varnothing$ if $\frac{n-1}{2}$ is even) has width
\begin{equation*}
\omega_i = \left\{ 
\begin{array}{ll} 
	Q_{i-1}(\lambda) + Q_{i+1}(\lambda) &\text{ if } 0 \leq i \leq \frac{n-1}{2}-2 \\
	Q_{\frac{n-1}{2} -1}(\lambda)  &\text{ if } i = \frac{n-1}{2}
\end{array}
\right.
\end{equation*}
In particular, we have:
\begin{Lem}\label{lem:widths_cylinders}
For $1 \leq i \leq \frac{n-1}{2}-1$, $\omega_i$ can be expressed as a monic polynomial in $\lambda$ of degree $i$.
\end{Lem}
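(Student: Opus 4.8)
The plan is to exploit that the polynomials $Q_i$ obey a Chebyshev-type linear recurrence, from which one can read off both the degree and the leading coefficient, and then assemble the answer from two consecutive terms.

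First I would show, by induction on $i$, that for every $i \geq 1$ the polynomial $Q_i$ is monic of degree $i-1$. The base cases are $Q_0 = Q_1 = 1$ (and, if one wants to see the pattern, $Q_2 = X-1$, $Q_3 = X^2 - X - 1$), and the inductive step is immediate from $Q_{i+1}(X) = X Q_i(X) - Q_{i-1}(X)$: multiplying a monic polynomial of degree $i-1$ by $X$ yields a monic polynomial of degree $i$, and subtracting $Q_{i-1}$, whose degree is at most $i-2 < i$, affects neither the degree nor the leading coefficient. Then, for $1 \leq i \leq \frac{n-1}{2}-1$ we are in the generic branch of the displayed width formula, namely $\omega_i = Q_{i-1}(\lambda) + Q_{i+1}(\lambda)$. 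Since $Q_{i+1}$ is monic of degree $i$ while $Q_{i-1}$ has degree strictly less than $i$ (it is the constant $1$ when $i=1$), the polynomial $Q_{i-1}(X) + Q_{i+1}(X)$ is monic of degree $i$, and evaluating at $X = \lambda$ gives the claim.

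Alternatively, and perhaps more cleanly, the recurrence rewrites the sum directly as $\omega_i = Q_{i-1}(\lambda) + Q_{i+1}(\lambda) = \lambda\, Q_i(\lambda)$; this is precisely the statement that the cylinder $C_i$ has modulus $\lambda$, consistent with Proposition \ref{prop:cylinders}, and combined with $Q_i$ being monic of degree $i-1$ it again exhibits $\omega_i$ as a monic polynomial in $\lambda$ of degree $i$.

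There is essentially no real obstacle here; the only points requiring a little care are bookkeeping ones. One must treat the boundary value $i=1$ separately, where $Q_{i-1} = Q_0$ is the constant $1$ and not covered by the formula $\deg Q_j = j-1$ (which fails at $j = 0$) — one simply checks $\omega_1 = 1 + (\lambda - 1) = \lambda$. One should also record that the hypothesis $i \leq \frac{n-1}{2}-1$ is exactly what keeps us out of the exceptional case $i = \frac{n-1}{2}$, where $\omega_i = Q_{\frac{n-1}{2}-1}(\lambda)$ instead and the degree would drop to $\frac{n-1}{2}-2$.
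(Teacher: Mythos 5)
Your proof is correct and follows the same (essentially only) route the paper intends: the lemma is stated there as an immediate consequence of the recurrence $Q_{i+1}(X) = XQ_i(X) - Q_{i-1}(X)$, which forces $Q_i$ to be monic of degree $i-1$, so that $\omega_i = Q_{i-1}(\lambda) + Q_{i+1}(\lambda) = \lambda Q_i(\lambda)$ is monic of degree $i$. Your observation that $\omega_i = \lambda Q_i(\lambda)$ is exactly the modulus-$\lambda$ statement of Proposition \ref{prop:cylinders}, and your reading that the first branch of the displayed width formula covers $i \leq \frac{n-1}{2}-1$ (despite the $\frac{n-1}{2}-2$ written there) is the intended one.
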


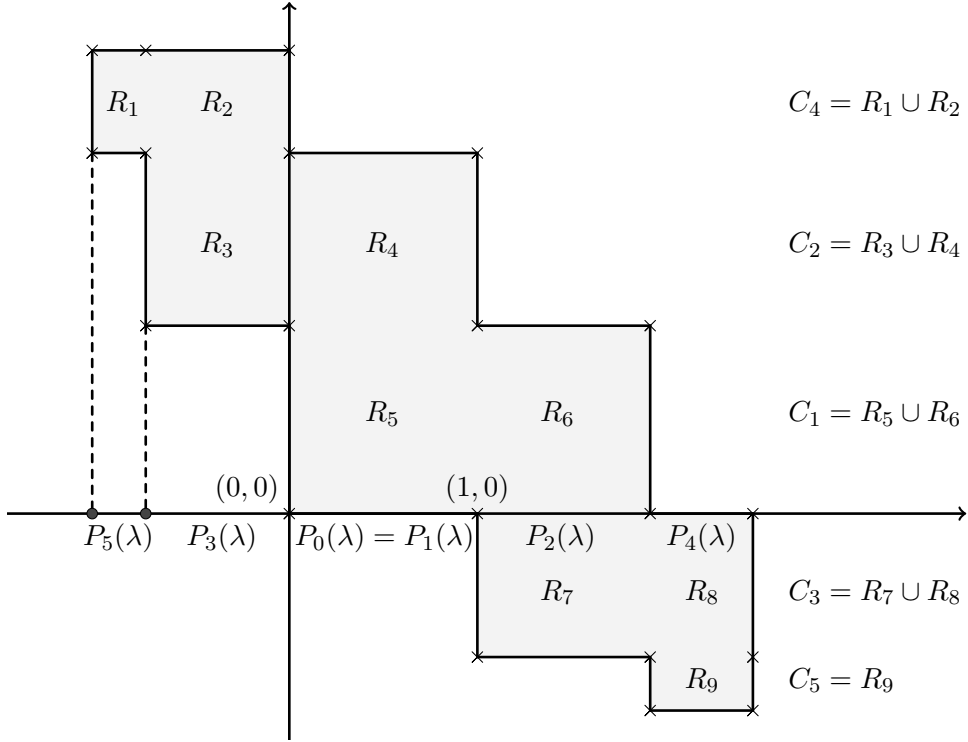
\begin{figure}
\center
\definecolor{uuuuuu}{rgb}{0.26666666666666666,0.26666666666666666,0.26666666666666666}
\begin{tikzpicture}[line cap=round,line join=round,>=triangle 45,x=2.5cm,y=2.5cm]
\clip(-1.5,-1.2) rectangle (3.8,2.9);
\fill[line width=1pt,fill=black,fill opacity=0.05] (0,0) -- (1,0) -- (1,-0.7635211184333672) -- (1.9189859472289947,-0.7635211184333672) -- (1.9189859472289947,-1.0481507949799365) -- (2.4651862966861966,-1.0481507949799365) -- (2.4651862966861966,-0.7635211184333672) -- (2.4651862966861966,0) -- (1.9189859472289947,0) -- (1.9189859472289947,1) -- (1,1) -- (1,1.9189859472289947) -- (0,1.9189859472289947) -- (0,2.4651862966861966) -- (-0.7635211184333672,2.4651862966861966) -- (-1.0481507949799365,2.4651862966861966) -- (-1.0481507949799365,1.9189859472289947) -- (-0.7635211184333672,1.9189859472289947) -- (-0.7635211184333672,1) -- (0,1) -- cycle;
\draw [line width=1pt] (-1.0481507949799365,2.4651862966861966)-- (-0.7635211184333672,2.4651862966861966);
\draw [line width=1pt] (-0.7635211184333672,2.4651862966861966)-- (0,2.4651862966861966);
\draw [line width=1pt] (0,2.4651862966861966)-- (0,1.9189859472289947);
\draw [line width=1pt] (0,1.9189859472289947)-- (1,1.9189859472289947);
\draw [line width=1pt] (1,1.9189859472289947)-- (1,1);
\draw [line width=1pt] (1,1)-- (1.9189859472289947,1);
\draw [line width=1pt] (1.9189859472289947,1)-- (1.9189859472289947,0);
\draw [line width=1pt] (1.9189859472289947,0)-- (2.4651862966861966,0);
\draw [line width=1pt] (2.4651862966861966,0)-- (2.4651862966861966,-0.7635211184333672);
\draw [line width=1pt] (2.4651862966861966,-0.7635211184333672)-- (2.4651862966861966,-1.0481507949799365);
\draw [line width=1pt] (2.4651862966861966,-1.0481507949799365)-- (1.9189859472289947,-1.0481507949799365);
\draw [line width=1pt] (1.9189859472289947,-1.0481507949799365)-- (1.9189859472289947,-0.7635211184333672);
\draw [line width=1pt] (1.9189859472289947,-0.7635211184333672)-- (1,-0.7635211184333672);
\draw [line width=1pt] (1,-0.7635211184333672)-- (1,0);
\draw [line width=1pt] (1,0)-- (0,0);
\draw [line width=1pt] (0,0)-- (0,1);
\draw [line width=1pt] (0,1)-- (-0.7635211184333672,1);
\draw [line width=1pt] (-0.7635211184333672,1)-- (-0.7635211184333672,1.9189859472289947);
\draw [line width=1pt] (-0.7635211184333672,1.9189859472289947)-- (-1.0481507949799365,1.9189859472289947);
\draw [line width=1pt] (-1.0481507949799365,1.9189859472289947)-- (-1.0481507949799365,2.4651862966861966);
\draw [line width=1pt,-to] (-1.5,0)-- (3.6,0);
\draw [line width=1pt,-to] (0,-1.5)-- (0,2.7223638607832705);
\draw (-0.025,0) node[anchor=north west] {$P_0(\lambda) = P_1(\lambda)$};
\draw (1.2,0) node[anchor=north west] {$P_2(\lambda)$};
\draw (-0.6,0) node[anchor=north west] {$P_3(\lambda)$};
\draw (1.95,0) node[anchor=north west] {$P_4(\lambda)$};
\draw (-1.15,0) node[anchor=north west] {$P_5(\lambda)$};
\draw [line width=1pt,dash pattern=on 3pt off 3pt] (-1.0481507949799365,1.9189859472289947)-- (-1.0481507949799365,0);
\draw [line width=1pt,dash pattern=on 3pt off 3pt] (-0.7635211184333672,1)-- (-0.7635211184333671,0);
\draw (0.35,0.65) node[anchor=north west] {$R_5$};
\draw (0.35,1.55) node[anchor=north west] {$R_4$};
\draw (-0.53,1.55) node[anchor=north west] {$R_3$};
\draw (-0.53,2.3) node[anchor=north west] {$R_2$};
\draw (-1.03,2.3) node[anchor=north west] {$R_1$};
\draw (1.28,0.65) node[anchor=north west] {$R_6$};
\draw (1.28,-0.3) node[anchor=north west] {$R_7$};
\draw (2.05,-0.77) node[anchor=north west] {$R_9$};
\draw (2.05,-0.3) node[anchor=north west] {$R_8$};
\draw (2.6,0.65) node[anchor=north west] {$C_1 = R_5 \cup R_6$};
\draw (2.6,1.55) node[anchor=north west] {$C_2 = R_3 \cup R_4$};
\draw (2.6,2.3) node[anchor=north west] {$C_4 = R_1 \cup R_2$};
\draw (2.6,-0.3) node[anchor=north west] {$C_3 = R_7 \cup R_8$};
\draw (2.6,-0.77) node[anchor=north west] {$C_5 = R_9$};
\draw (0,0) node[anchor=south east] {$(0,0)$};
\draw (1,0) node[above] {$(1,0)$};
\begin{scriptsize}
\draw [color=black] (0,0)-- ++(-2pt,-2pt) -- ++(4pt,4pt) ++(-4pt,0) -- ++(4pt,-4pt);
\draw [color=black] (1,0)-- ++(-2pt,-2pt) -- ++(4pt,4pt) ++(-4pt,0) -- ++(4pt,-4pt);
\draw [color=black] (0,1)-- ++(-2pt,-2pt) -- ++(4pt,4pt) ++(-4pt,0) -- ++(4pt,-4pt);
\draw [color=black] (1,1)-- ++(-2pt,-2pt) -- ++(4pt,4pt) ++(-4pt,0) -- ++(4pt,-4pt);
\draw [color=black] (1.9189859472289947,1)-- ++(-2pt,-2pt) -- ++(4pt,4pt) ++(-4pt,0) -- ++(4pt,-4pt);
\draw [color=black] (1,1.9189859472289947)-- ++(-2pt,-2pt) -- ++(4pt,4pt) ++(-4pt,0) -- ++(4pt,-4pt);
\draw [color=black] (1.9189859472289947,0)-- ++(-2pt,-2pt) -- ++(4pt,4pt) ++(-4pt,0) -- ++(4pt,-4pt);
\draw [color=black] (0,1.9189859472289947)-- ++(-2pt,-2pt) -- ++(4pt,4pt) ++(-4pt,0) -- ++(4pt,-4pt);
\draw [color=black] (-0.7635211184333672,1.9189859472289947)-- ++(-2pt,-2pt) -- ++(4pt,4pt) ++(-4pt,0) -- ++(4pt,-4pt);
\draw [color=black] (-0.7635211184333672,1)-- ++(-2pt,-2pt) -- ++(4pt,4pt) ++(-4pt,0) -- ++(4pt,-4pt);
\draw [color=black] (1,-0.7635211184333672)-- ++(-2pt,-2pt) -- ++(4pt,4pt) ++(-4pt,0) -- ++(4pt,-4pt);
\draw [color=black] (1.9189859472289947,-0.7635211184333672)-- ++(-2pt,-2pt) -- ++(4pt,4pt) ++(-4pt,0) -- ++(4pt,-4pt);
\draw [color=black] (-0.7635211184333672,2.4651862966861966)-- ++(-2pt,-2pt) -- ++(4pt,4pt) ++(-4pt,0) -- ++(4pt,-4pt);
\draw [color=black] (0,2.4651862966861966)-- ++(-2pt,-2pt) -- ++(4pt,4pt) ++(-4pt,0) -- ++(4pt,-4pt);
\draw [color=black] (2.4651862966861966,0)-- ++(-2pt,-2pt) -- ++(4pt,4pt) ++(-4pt,0) -- ++(4pt,-4pt);
\draw [color=black] (2.4651862966861966,-0.7635211184333672)-- ++(-2pt,-2pt) -- ++(4pt,4pt) ++(-4pt,0) -- ++(4pt,-4pt);
\draw [color=black] (-1.0481507949799365,2.4651862966861966)-- ++(-2pt,-2pt) -- ++(4pt,4pt) ++(-4pt,0) -- ++(4pt,-4pt);
\draw [color=black] (-1.0481507949799365,1.9189859472289947)-- ++(-2pt,-2pt) -- ++(4pt,4pt) ++(-4pt,0) -- ++(4pt,-4pt);
\draw [color=black] (1.9189859472289947,-1.0481507949799365)-- ++(-2pt,-2pt) -- ++(4pt,4pt) ++(-4pt,0) -- ++(4pt,-4pt);
\draw [color=black] (2.4651862966861966,-1.0481507949799365)-- ++(-2pt,-2pt) -- ++(4pt,4pt) ++(-4pt,0) -- ++(4pt,-4pt);
\draw [fill=uuuuuu] (-1.0481507949799365,0) circle (2pt);
\draw [fill=uuuuuu] (-0.7635211184333671,0) circle (2pt);
\end{scriptsize}
\end{tikzpicture}
\caption{The staircase $S_{11}$ represented in the plane, with the bottom left corner of $R_5$ set as the origin.}
\label{fig:coordinates_staircase}
\end{figure}

\paragraph{\textbf{The Veech group of $S_n$.}}
Another consequence of Proposition \ref{prop:cylinders} is that there is an affine diffeomorphism $\varphi_T$ corresponding to twisting once in every horizontal cylinder. In fact, the group of orientation-preserving affine diffeomorphisms of $S_n$ is generated by $\varphi_T$ and the affine diffeomorphism $\varphi_S$ obtained by a $90^\circ$ rotation of the staircase and cutting and pasting the rotated staircase back to the original surface. These affine diffeomorphims have respective derivatives

\begin{equation*}
T := \begin{pmatrix} 1 & \lambda \\ 0 & 1 \end{pmatrix} \text{ and }
S :=\begin{pmatrix} 0 & -1 \\ 1 & 0 \end{pmatrix}
\end{equation*}
and we have

\begin{Theo}
The Veech group of the staircase model associated to $X_n$ is the Hecke group of level $n$, generated by $S$ and $T$.
\end{Theo}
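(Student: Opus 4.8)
The plan is to prove the two inclusions $\langle S,T\rangle \subseteq SL(S_n)$ and $SL(S_n)\subseteq \langle S,T\rangle$ separately; the first is elementary and has essentially been carried out in the discussion above, whereas the second is where the real content lies.

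For $\langle S,T\rangle \subseteq SL(S_n)$ I would simply record that the two affine diffeomorphisms already described realize $S$ and $T$ as derivatives. That $\varphi_T$ has derivative $T=\left(\begin{smallmatrix}1&\lambda\\0&1\end{smallmatrix}\right)$ is precisely the statement that all horizontal cylinders share the modulus $\lambda$ (Proposition~\ref{prop:cylinders}): performing a single affine twist simultaneously in each of them is then globally affine. That $\varphi_S$ has derivative $S=\left(\begin{smallmatrix}0&-1\\1&0\end{smallmatrix}\right)$ uses that the $90^\circ$ rotation carries the horizontal cylinder decomposition of $S_n$ onto its vertical one and that, again by Proposition~\ref{prop:cylinders} together with the explicit matching of the widths $\omega_i$, the rotated staircase can be re-cut and re-assembled into $S_n$ itself — this is exactly Hooper's construction \cite{Ho12}. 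Hence $S,T\in SL(S_n)$.

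For the reverse inclusion I would argue as follows. By definition $\langle S,T\rangle$ is the Hecke group $H_n$, and Hecke's theorem guarantees that for $\lambda=\lambda_n=2\cos(\pi/n)$ with $n\geq 3$ this group is discrete and of finite covolume — it is the $SL_2(\RR)$-preimage of the $(2,n,\infty)$-triangle group (the full preimage, since $S^2=-I\in H_n$), with $\mathrm{covol}(\HH/H_n)=\pi\!\left(1-\tfrac{2}{n}\right)$. On the other hand, by Veech's theorem $SL(S_n)$ is discrete; containing the lattice $H_n$, it is itself a lattice with $[SL(S_n):H_n]<\infty$, and the task reduces to showing this index equals $1$. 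There are two classical ways to finish. One is to invoke maximality of the triangle group: $\Delta(2,n,\infty)$ is a maximal Fuchsian group for every $n\geq 3$ (this follows from Singerman's classification of finitely maximal Fuchsian groups), so the image of $SL(S_n)$ in $PSL_2(\RR)$, being a Fuchsian group containing $\Delta(2,n,\infty)$, must equal it; as $H_n$ and $SL(S_n)$ both contain $-I$ (the latter because the hyperelliptic involution of $S_n$ has derivative $-I$), one concludes $SL(S_n)=H_n$. The other is to compute $\mathrm{covol}(\HH/SL(S_n))$ directly via Gauss--Bonnet: the quotient orbifold has genus $0$, one cone point of order $n$ from the rotational symmetry of the $n$-gon, one of order $2$ from the hyperelliptic involution, and one cusp, which yields $2\pi\!\left(-2+(1-\tfrac12)+(1-\tfrac1n)+1\right)=\pi\!\left(1-\tfrac2n\right)$, again forcing index $1$.

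I expect this last step to be the main obstacle: ruling out any "extra" affine symmetry of $S_n$. In the covolume approach the delicate point is the cusp count, i.e. the assertion that all periodic directions of $S_n$ lie in a single $SL(S_n)$-orbit; in the maximality approach one instead leans on the structure theory of Fuchsian groups. Either way, this is in essence Veech's original computation \cite{Ve89} of the Veech group of the double $(2k+1)$-gon, transported through the normalizing conjugacy $P$, which by construction carries that triangle group onto $\langle S,T\rangle$ exactly.
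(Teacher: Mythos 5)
Your two-inclusion strategy is sound, and it is worth saying up front that the paper itself offers no proof of this theorem: it is stated as a known fact, implicitly resting on Veech's computation \cite{Ve89} for the double $n$-gon and on Hooper's staircase construction \cite{Ho12}, which conjugates that Veech group into the group generated by $S$ and $T$. Your easy inclusion is exactly right and is the content the paper does spell out: Proposition~\ref{prop:cylinders} gives $\varphi_T$ with derivative $T$, and the $90^\circ$ symmetry of the staircase gives $\varphi_S$ with derivative $S$. For the reverse inclusion your reduction is the standard one (Hecke's discreteness and covolume $\pi(1-\tfrac{2}{n})$, Veech's discreteness of $SL(S_n)$, hence finite index, hence an equality to be forced), and both of your proposed endgames are legitimate; your Gauss--Bonnet arithmetic for the $(0;2,n;1)$ orbifold signature is correct. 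The honest caveat you raise is the real one, though, and you should not understate it: the covolume route presupposes knowing the signature of $\HH/SL(S_n)$ --- in particular that there is a single cusp, i.e.\ that all periodic directions form one orbit, and that there are no further elliptic points --- which is essentially equivalent to what is being proved; and the maximality route requires actually verifying that the signature $(2,n,\infty)$ does not occur on the left-hand side of any inclusion in Singerman's tables (it does not, for all $n\ge 3$), plus the observation that $-I$ lies in both groups so that the $PSL_2$ statement lifts to $SL_2$. As written, your argument is a correct and complete roadmap whose final step is outsourced to Veech's original computation --- which is precisely the level of rigor at which the paper itself invokes the result, so nothing is lost relative to the source; but if this were to stand as a self-contained proof, the single-orbit-of-cusps assertion (or the Singerman verification) is the one item that would still need to be written out.
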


In fact, it will be convenient for our purposes to add also the orientation-reversing elements. To obtain a generating set, one should add to $\varphi_T$ and $\varphi_S$ the orientation reversing diffeomorphism $\varphi_R$ given by the symmetry along the first bisection $(x,y) \leftrightarrow (y,x)$ (in the coordinates we chose for $S_n$), whose matrix is given by $R :=\begin{pmatrix} 0 & 1 \\ 1 & 0 \end{pmatrix}$.

\paragraph{\textbf{Periodic points on the double regular $n$-gons.}}
We conclude this section with a discussion on the notion of periodic point, for later use. A periodic point on a translation surface $X$ is a point whose orbit under the action of the affine group is finite. The periodic points of a Veech surface $X$ are automatically connection points, as if $P$ is a periodic point, then the marked surface $X_P$ where $P$ has been added to the set of singularities has a Veech group of finite index in $X$, hence the set of parabolic limit points coincide: every geodesic from a singularity to $P$ has a periodic direction on $X_P$, hence on $X$, and thus extends to a saddle connection on $X$. 

On the double regular $n$-gon $X_n$, the periodic points have been classified by Apisa-Saavedra-Zhang and are exactly the middle points of sides of the $n$-gon; see \cite[Theorem 1.3]{Apisa_Saavedra_Zhang}. In the staircase model, these points correspond to the center of the rectangles $R_i$, as well as the middle of the small sides of $R_1$ (the two such sides are identified) and the middle of the small sides of $R_{n-2}$ (same remark). In particular, the periodic points of the staircase model have coordinates of the form $\frac{1}{2}(x,y)$ with $x,y \in \ZZ[\lambda]$ and (indeed) do not fit in the statement of Theorem \ref{theo:non_connexion_impair}.

A major property for periodic points is that they have rational height in every cylinder, see \cite{Apisa_periodic_points}. Given a point $P$ in a cylinder $C$, we will denote by $h_C(P)$ the distance from $P$ to the boundary of $C$, and $h_C$ the height of $C$.

\begin{Def}
$P$ is said to have \emph{rational height}  (resp. \emph{irrational height}) in $C$ if $\frac{h_C(P)}{h_C} \in \QQ$ (resp. $\frac{h_C(P)}{h_C} \notin \QQ$).
\end{Def}

In fact the above is a characterization of periodic points in the staircase model of the regular $n$-gon. More precisely, we have:

\begin{Prop}\label{prop:irrational_height}
A point $P$ on the (staircase model of the) double regular $n$-gon $P$ is a periodic point if and only if it has rational height in every cylinder.
\end{Prop}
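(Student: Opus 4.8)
The plan is to prove the two implications separately, the forward one being a short general fact and the converse carrying the content. For the forward direction: if $P$ is periodic then its stabilizer in the affine group $\mathrm{Aff}(S_n)$ has finite index, so for every periodic direction $\theta$ a power $\psi_\theta^{m}$ of the parabolic affine diffeomorphism $\psi_\theta$ that twists once around each cylinder of direction $\theta$ fixes $P$ (such a $\psi_\theta$ exists by Theorem \ref{theo:Veech}, and it twists exactly once in each cylinder since all cylinders of a periodic direction have modulus $\lambda$ by Proposition \ref{prop:cylinders}). On the cylinder $C$ of direction $\theta$ containing $P$, the map $\psi_\theta^{m}$ acts as an $m$-fold Dehn twist, displacing a point at height $h$ along a core curve by $m\,\omega_C\,h/h_C$; as this displacement is an integer multiple of the circumference $\omega_C$, one gets $m\,h_C(P)/h_C\in\ZZ$, so $P$ has rational height in $C$. (This also follows from \cite{Apisa_periodic_points}.)

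For the converse, I would let $\mathcal R$ be the set of points of $S_n$ having rational height in every cylinder. Affine diffeomorphisms permute the cylinders of all periodic directions and preserve ratios of parallel lengths, so $\mathcal R$ is $\mathrm{Aff}(S_n)$-invariant. The crux is then to show that $\mathcal R$ is finite: once this is established, every $P\in\mathcal R$ has finite orbit $\mathrm{Aff}(S_n)\cdot P\subseteq\mathcal R$ and is therefore periodic, which proves the remaining implication (and recovers, in particular, the finiteness of the set of periodic points).

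To prove that $\mathcal R$ is finite I would first use the horizontal and vertical cylinder decompositions. The horizontal cylinders $C_1,\dots,C_{(n-1)/2}$ tile $S_n$ into consecutive horizontal strips whose boundary ordinates, being partial sums of the heights $h_{C_i}=\omega_i/\lambda$, lie in $\ZZ[\lambda]$ (using Lemma \ref{lem:widths_cylinders}); hence a point of rational height in its horizontal cylinder $C_{i_0}$ has ordinate $b_{i_0}+s\,h_{C_{i_0}}$ with $b_{i_0}\in\ZZ[\lambda]$ and $s\in\QQ$, and symmetrically abscissa $c_{j_0}+r\,h'_{C'_{j_0}}$ with $r\in\QQ$. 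So $P$ has coordinates in $\QQ(\lambda)$ and, for fixed indices $(i_0,j_0)$, lies in an explicit $\QQ$-affine plane in $\QQ(\lambda)^2$ parametrised by $(r,s)$. Next I would feed in a third periodic direction, for instance one of the directions $\varphi_T^{k}(\text{vertical})$: writing out the rational-height condition of $P$ in its cylinder of that direction and clearing the denominator $\lambda$ turns it into a single $\QQ(\lambda)$-linear relation of the form $r\,\omega'_{j_0}-s\,\lambda\omega_{i_0}+t\,\omega'_{j'}=\lambda\gamma$ with $t\in\QQ$, where $\gamma\in\ZZ[\lambda]$ depends only on combinatorial data and the coefficients $\omega'_{j_0}$, $\lambda\omega_{i_0}$, $\omega'_{j'}$ are monic polynomials in $\lambda$ of small, pairwise distinct degrees (essentially $j_0$, $i_0+1$, $j'$) for a suitable choice of $k$. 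Expanding in a $\QQ$-basis of $\QQ(\lambda)$ — which has degree $\varphi(n)/2\geq 3$ over $\QQ$ for $n\geq 7$ — this system has rank $3$ and determines $(r,s,t)$ uniquely. Since the combinatorial data $(i_0,j_0,j')$ and the auxiliary choices (which singularity is the reference boundary point, signs) range over finite sets, this leaves only finitely many candidates for $P$, so $\mathcal R$ is finite.

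The step I expect to be the main obstacle is ruling out the degenerate configurations in the last paragraph, namely when the three degrees $j_0$, $i_0+1$, $j'$ fail to be distinct and the third relation drops rank. I would handle this by exploiting the freedom in the auxiliary direction: as $k$ varies in $\varphi_T^{k}(\text{vertical})$ the index $j'$ of the cylinder of that direction through $P$ cannot remain confined to the two-element set $\{j_0,i_0+1\}$, so some $k$ works; failing a clean uniform argument, using two auxiliary directions together with a rank count still yields finitely many solutions. One routine point to dispose of en route: if $P$ lies on a horizontal or vertical saddle connection, then the corresponding coordinate of $P$ is already one of the finitely many boundary values in $\ZZ[\lambda]$, and the argument proceeds using the remaining directions.
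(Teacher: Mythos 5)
Your forward implication is fine (it is the standard Dehn-twist argument; the paper simply cites \cite{Apisa_periodic_points} for it). For the converse you take a genuinely different route from the paper: you try to prove outright that the set $\mathcal R$ of points with rational height in every cylinder is finite, by intersecting the rational-height conditions from three periodic directions and claiming the resulting linear system over $\QQ$ has rank $3$. The gap is exactly at that rank claim, and it is not a removable technicality: in the most basic configuration it is false. Take $P=(x,y)$ in the central unit square $R_{\frac{n-1}{2}}$, so $i_0=j_0=1$ and rational height in $C_1$ and $\varphi_S(C_1)$ just means $x,y\in\QQ$. The natural auxiliary direction $\varphi_T^{\mp1}(\text{vertical})$ produces the condition $\frac{x\pm\lambda y-1+m\lambda}{\lambda-1}\in\QQ$, whose coefficient vectors in your notation are $1$, $\lambda$ and $\lambda-1$ --- degrees $0,1,1$, linearly dependent --- so the system has rank $2$ and an entire rational line of the square (the diagonal $y=x$ or the antidiagonal $x+y=1$, depending on which sub-rectangle the image lands in) survives all three conditions. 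Thus "a suitable choice of $k$ makes the three degrees distinct" is not available here, and your fallback ("two auxiliary directions plus a rank count") would require a case analysis over which cylinder $\varphi_T^{-k}(P)$ lands in, over all $(i_0,j_0)$ and all $n$, that you have not carried out. A secondary weakness of the degree argument: for non-prime $n$ (e.g.\ $n=9,15$) one has $[\QQ(\lambda):\QQ]=\tfrac12\varphi(2n)<\tfrac{n-1}{2}$, so the widths $\omega_i$ being monic of pairwise distinct degrees $\le\tfrac{n-3}{2}$ does not by itself give $\QQ$-linear independence.

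For comparison, the paper does not attempt to prove finiteness of $\mathcal R$. It argues by contraposition: a non-periodic point has a dense affine orbit, so some orbit point $Q_0=(x,y)$ lies in the open triangle $Z_0=\{0<x<1,\ 1-x<y<1-x/\lambda\}$ of the unit square; if $Q_0$ had rational height in $C_1$ and $\varphi_S(C_1)$ then $x,y\in\QQ$, and the single further condition that $\varphi_T(Q_0)$ have rational height in $\varphi_S(C_2)$ forces $x+y=1$, which is incompatible with $Q_0\in Z_0$. In other words, the paper accepts that three directions only cut the solution set down to a line, and uses density to move the point off that line --- precisely the degenerate locus your rank argument cannot eliminate. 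If you want to salvage your approach you should either import this density step, or genuinely carry out the multi-direction elimination (four directions suffice in the unit square, pinning the point to $(\tfrac12,\tfrac12)$, but the general case needs to be written down).
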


\begin{proof}
As already mentioned, periodic points have rational height in every cylinder. Hence, it suffices to show that non-periodic points have irrational height in at least one cylinder. We will work in the staircase model $S_n$. Given a non-periodic point $P$, its orbit under the action of the affine group is dense (since the orbit is not finite it must have accumulation points, and then density can be shown using twists), and in particular there is an element $Q_0 = (x,y)$ in the orbit of $P$ lying inside the region 
\[ 
Z_0 = \{ Q = (x,y) \in S_n, 0 < x< 1 \text{ and } 1-x < y < 1 - \frac{1}{\lambda}x  \} \subset R_{\frac{n-1}{2}},
\]
see Figure \ref{fig:irrational_height}. Assume that $Q_0$ has rational height in both $C_1$ and $\varphi_S(C_1)$, that is its coordinates are rational: $x, y \in \QQ$. In this case, the point $\varphi_T(Q_0)$ of coordinates $(x+ \lambda y, y)$ has height in $\varphi_S(C_2)$ given by $\frac{x+\lambda y - 1}{\lambda - 1}$, and
\begin{align*}
\frac{x+\lambda y - 1}{\lambda - 1} \in \QQ & \iff \exists p, q \in \ZZ \times \NN^{\star}, \frac{x+\lambda y - 1}{\lambda - 1} = \frac{p}{q}\\
 & \iff \exists p, q \in \ZZ \times \NN^{\star}, (x-1)q + qy \lambda = p \lambda - p
\end{align*}
and since $x$ and $y$ are rational and $\lambda \notin \QQ$, this is equivalent to
\[ \left\{ \begin{array}{ll}
x = 1 - \frac{p}{q} \\
y = \frac{p}{q}
\end{array}
\right.
\]
which is impossible because since $Q_0 \in Z_0$ we have $1-x < y$.
\end{proof}

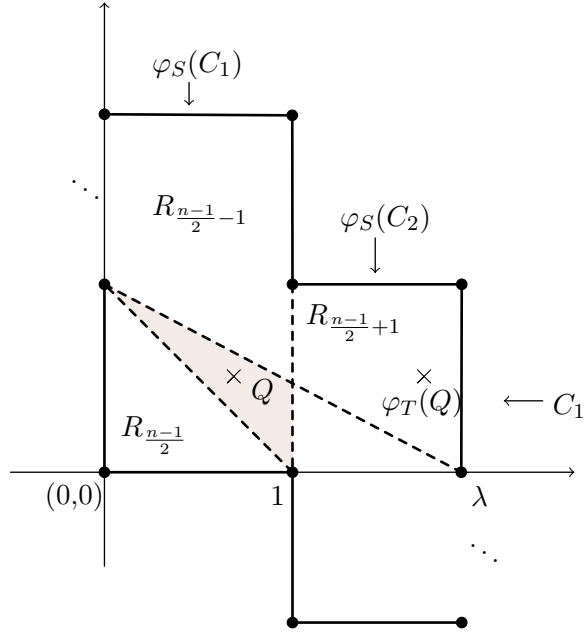
\begin{figure}[h]
\definecolor{zzttqq}{rgb}{0.6,0.2,0}
\begin{tikzpicture}[line cap=round,line join=round,>=triangle 45,x=2.5cm,y=2.5cm]
\clip(-0.5,-1) rectangle (2.75,2.75);
\fill[line width=1pt,color=zzttqq,fill=zzttqq,fill opacity=0.10000000149011612] (0,1) -- (1,0.47302251617600094) -- (1,0) -- cycle;
\draw [line width=1pt] (0,0)-- (1,0);
\draw [line width=1pt] (0,0)-- (0,1);
\draw [line width=1pt] (1,1)-- (0.998788286589887,1.8993304595392222);
\draw [line width=1pt] (0.998788286589887,1.8993304595392222)-- (0,1.9049834532258585);
\draw [line width=1pt] (1,1)-- (1.9,1);
\draw [line width=1pt] (1.9,1)-- (1.8976142827650335,0);
\draw [line width=1pt,dash pattern=on 3pt off 3pt] (1,0)-- (0,1);
\draw [line width=1pt,dash pattern=on 3pt off 3pt] (0,1)-- (1.8976142827650335,0);
\draw [line width=1pt,dash pattern=on 3pt off 3pt] (1,1)-- (1,0);
\draw (0.7261978091150156,0.55) node[anchor=north west] {$Q$};
\draw (1.42,0.5) node[anchor=north west] {$\varphi_T(Q)$};
\draw [line width=0.5pt, -to] (-0.5,0)-- (2.5,0);
\draw [line width=0.5pt, -to] (0,-0.5)-- (0,2.5);
\draw [line width=1pt] (1,0)-- (1,-0.8);
\draw [line width=1pt] (1,-0.8)-- (1.9,-0.8);
\draw (0.03415335207099173,0.35) node[anchor=north west] {$R_{\frac{n-1}{2}}$};
\draw (1.0172445433858668,0.95) node[anchor=north west] {$R_{\frac{n-1}{2}+1}$};
\draw (0.2,1.53) node[anchor=north west] {$R_{\frac{n-1}{2}-1}$};
\draw (2.3301887002077066,0.47) node[anchor=north west] {$C_1$};
\draw (0.2,2.3) node[anchor=north west] {$\varphi_S(C_1)$};
\draw (1.2,1.48) node[anchor=north west] {$\varphi_S(C_2)$};
\draw [line width=0.5pt, -to] (1.44,1.2469062727847082)-- (1.44,1.0593428218101597);
\draw [line width=0.5pt,-to] (0.45,2.074772539155129)-- (0.45,1.9518861402407697);
\draw [line width=0.5pt, -to] (2.3301887002077066,0.38)-- (2.1232221336151014,0.38);
\draw (-0.37,0) node[anchor=north west] {(0,0)};
\draw (0.83,-0.02) node[anchor=north west] {$1$};
\draw (1.8968524514044391,-0.02) node[anchor=north west] {$\lambda$};
\draw (1.8774493357863824,-0.22773051418760362) node[anchor=north west] {$\ddots$};
\draw (-0.24395797178782164,1.706113342412051) node[anchor=north west] {$\ddots$};
\begin{scriptsize}
\draw [fill=black] (0,0) circle (2pt);
\draw [fill=black] (1,0) circle (2pt);
\draw [fill=black] (0,1) circle (2pt);
\draw [fill=black] (1,1) circle (2pt);
\draw [fill=black] (0.998788286589887,1.8993304595392222) circle (2pt);
\draw [fill=black] (0,1.9049834532258585) circle (2pt);
\draw [fill=black] (1.9,1) circle (2pt);
\draw [fill=black] (1.8976142827650335,0) circle (2pt);
\draw [color=black] (0.68787,0.51)-- ++(-2.5pt,-2.5pt) -- ++(5pt,5pt) ++(-5pt,0) -- ++(5pt,-5pt);
\draw [color=black] (1.69976,0.51)-- ++(-2.5pt,-2.5pt) -- ++(5pt,5pt) ++(-5pt,0) -- ++(5pt,-5pt);
\draw [fill=black] (1,-0.8) circle (2pt);
\draw [fill=black] (1.9,-0.8) circle (2pt);
\end{scriptsize}
\end{tikzpicture}
\caption{The region $Z_0$ in the proof of Proposition \ref{prop:irrational_height}.}
\label{fig:irrational_height}
\end{figure}

\section{Hecke groups and reduction modulo two}\label{sec:Hecke} 
Given $n \geq 3$, the Hecke group $H_n$ of level $n$ is the subgroup of $PSL_2(\RR)$ generated by 
$$T = \pm \begin{pmatrix} 1 & \lambda_n \\ 0 & 1 \end{pmatrix} \text{ and } S = \pm \begin{pmatrix} 0 & -1 \\ 1 & 0 \end{pmatrix}.$$
(Recall that $\lambda_n = 2 \cos \frac{\pi}{n}$.)
These groups have been first studied by E.~Hecke \cite{Hecke_original,Hecke1983lectures}, who has shown that they are discrete subgroups of $PSL_2(\RR)$. It is often convenient to set $U = TS = \pm \begin{pmatrix} 2 \cos \frac{\pi}{n} & -1 \\ 1 & 0 \end{pmatrix}$, as then one checks that for $k \in \ZZ$,
\[ 
U^k = \frac{1}{\sin\frac{\pi}{n}} \begin{pmatrix} \sin \frac{(k+1)\pi}{n} & -\sin\frac{k\pi}{n} \\ \sin\frac{k\pi}{n} & - \sin\frac{(k-1)\pi}{n} \end{pmatrix}
\]
so that $S^2 = U^n = \pm I_2$ and it can be verified that $H_n$ is isomorphic to the free product $\ZZ / 2\ZZ * \ZZ / n\ZZ$. \newline

As a subgroup of $PSL_2(\RR)$, $H_n$ acts on the hyperbolic plane $\HH$. The quotient has finite area and $H_n$ is a lattice with a single cusp. It is a long-standing question to determine the orbit of the cusp $\infty \in \partial \HH$ under the action of $H_n$. The only known cases are $n=3,4,5,6,8,10$ and $12$, which are due to the work of several authors \cite{Rosen1954}, \cite{Leu67}, \cite{Leu74}. Such elements correspond to parabolic limit points, that is slopes of eigendirections of parabolic matrices of $H_n$, but also, by Theorem \ref{theo:Veech}, to slopes of periodic directions on the staircase model of the double regular $n$-gon. \newline

Of course, since $H_n \subset PSL_2(\ZZ[\lambda])$, we have $H_n \cdot \infty \subseteq \QQ[\lambda]$. In fact, this inclusion is an equality if and only $n=3,5$. More precisely, the work of Borho \cite{Bo73} and Borho-Rosenberger \cite{BoRo} (in German, see also the more recent work of Hanson-Merberg-Towse-Yudovina \cite{HMTY}, in English) provides an obstruction \textit{modulo two} to be in the orbit of $\infty$ for $n \geq 7$ odd. In this section, we review some of their results, which we will use in the proof of Theorem \ref{theo:non_connexion_impair} and Theorem \ref{theo:prime}.

\subsection{Obstruction modulo two: outline}\label{sec:outline_mod_2}
Let $\mathcal{O}$ be the ring of integers of $\QQ[\lambda]$, where $\lambda = \lambda_n = 2 \cos(\pi / n)$. We have $\mathcal{O} = \ZZ[\lambda]$, see e.g Proposition 2.16 of \cite{Washington}. Given $x \in \mathcal{O}$, we denote by $\overline{x}$ the reduction modulo two of $x$, which is an element of the (finite) ring $\overline{\mathcal{O}} := \mathcal{O}/2\mathcal{O}$. 

Recall that given a ring $R$, we define the projective set $\mathbb{P}^1(R)$ as the set of equivalence classes of pairs $(r,s) \in R \times R$, where $(r,s)$ and $(r',s')$ are considered equivalent if there exists an unit $u \in R^*$ such that $r = ur'$ and $s=us'$. For later use, let us denote respectively by $p$ and $\overline{p}$ the projections from $\mathcal{O}\times \mathcal{O} - \{(0,0)\}$ to $\mathbb{P}^1(\mathcal{O})$ and from $ \overline{\mathcal{O}} \times \overline{\mathcal{O}} - \{(\overline{0},\overline{0}) \}$ to $\mathbb{P}^1(\overline{\mathcal{O}})$. In our context, the Hecke group $H_n$ acts on $\mathbb{P}^1(\mathcal{O})$ since the entries of the matrices of $H_n$ belong to $\mathcal{O}$. The action of $H_n$ on $\mathbb{P}^1(\mathcal{O})$ reduces modulo two to an action of the \emph{reduced Hecke group} $\overline{H_n} \subset PSL_2(\overline{\mathcal{O}})$ on $\mathbb{P}^1(\overline{\mathcal{O}})$. More precisely, given $s = \frac{x}{y} \in \QQ[\lambda]$, with $x,y \in \mathcal{O}$ coprime, one can consider the element $[x:y]$ of $\mathbb{P}^1(\mathcal{O})$, which only depends on $s$ but not on $x$ and $y$. Hence, we have a chain of maps

\begin{align*}
\QQ[\lambda]\cup \{ \infty\} &\xrightarrow{\varphi_1}  \mathbb{P}^1(\mathcal{O})  \xrightarrow{\varphi_2}  \mathbb{P}^1(\overline{\mathcal{O}})\\
s= \frac{x}{y} &\longmapsto [x:y]  \longmapsto  [\overline{x}:\overline{y}]
\end{align*}
and the actions of $H_n$ on $\QQ[\lambda] \cup \{\infty\}$ and $\mathbb{P}^1(\mathcal{O})$ (resp. of $\overline{H}_n$ on $\mathbb{P}^1(\overline{\mathcal{O}})$) are compatible with $\varphi_1$ (resp. $\varphi_2$). From now on we will denote $\Psi := \varphi_2 \circ \varphi_1 : \QQ[\lambda] \cup \{\infty\} \to \mathbb{P}^1(\overline{\mathcal{O}})$.

In particular, if $s \in H_n \cdot \infty$, then $\Psi(s) \in \overline{H_n} \cdot \Psi(\infty)$. Now, for $n \geq 7$ odd, with the exception of $n=9$, it is a consequence of the work of Borho \cite{Bo73} (see also \cite{HMTY}) that the orbit $\overline{H_n} \cdot \Psi(\infty)$ is strictly contained in the finite set $\mathbb{P}^1(\overline{\mathcal{O}})$: this provides an obstruction for an element to be in $H_n \cdot \infty$.

\begin{Rema}
The fact that the reduction modulo two of the orbit is strictly contained in $\mathbb{P}^1(\overline{\mathcal{O}})$ is specific to the reduction modulo two, as Borho shows that for every other prime number $p \neq 2$, the image of the orbit $H_n \cdot [1:0]$ under the projection $\mathbb{P}^1(\mathcal{O}) \to \mathbb{P}^1(\mathcal{O}/p\mathcal{O})$ is all of $\mathbb{P}^1(\mathcal{O}/p\mathcal{O})$. 
\end{Rema}

\subsection{Description of the orbit}\label{sec:orbit_mod_2}
Let us be more precise in the description of $\overline{H_n} \cdot \Psi(\infty)$.\newline

We can first remark that $\overline{H_n} = < \overline{S}, \overline{U} | \overline{S}^2 = \overline{U}^n = \overline{US}^2 = id>$ and in particular $\overline{H_n}$ is a dihedral group. See \cite[Theorem 3.2]{Bo73}. On the other hand, we have:

\begin{Lem}
$\mathbb{P}^1(\overline{\mathcal{O}})$ has at least $2^{\frac{1}{2} \varphi (2n)} + 1$ elements.
\end{Lem}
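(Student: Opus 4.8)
The claim is a purely ring-theoretic lower bound on the cardinality of $\mathbb{P}^1(\overline{\mathcal{O}})$, where $\overline{\mathcal{O}} = \mathcal{O}/2\mathcal{O}$ and $\mathcal{O} = \ZZ[\lambda_n]$ with $\lambda_n = 2\cos(\pi/n)$. The plan is to reduce everything to understanding the finite ring $\overline{\mathcal{O}}$ as an $\mathbb{F}_2$-algebra. First I would record that $\mathcal{O}$ is a free $\ZZ$-module of rank $d := \tfrac12\varphi(2n)$ (the degree of $\lambda_n$ over $\QQ$, which equals $\tfrac12\varphi(2n)$ since $\lambda_n$ generates the maximal real subfield of $\QQ(\zeta_{2n})$), so that $\overline{\mathcal{O}}$ is an $\mathbb{F}_2$-vector space of dimension $d$, hence $\#\overline{\mathcal{O}} = 2^d = 2^{\frac12\varphi(2n)}$.

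Next I would count $\mathbb{P}^1(\overline{\mathcal{O}})$ from below. Recall $\mathbb{P}^1(\overline{\mathcal{O}})$ is the set of $(r,s)\in\overline{\mathcal{O}}^2\setminus\{(0,0)\}$ modulo the unit group $\overline{\mathcal{O}}^*$. A clean sublcass to count is the pairs $[x:1]$, $x\in\overline{\mathcal{O}}$: two such are identified iff $x = u x'$ and $1 = u$, i.e. $u=1$, so the map $x\mapsto [x:1]$ is injective and gives $2^d$ distinct points. Adding the point $[1:0]$, which is not of the form $[x:1]$ (there is no unit $u$ with $u\cdot 1 = 1$ and $u\cdot 0 = 1$), yields at least $2^d+1$ elements of $\mathbb{P}^1(\overline{\mathcal{O}})$, which is exactly the asserted bound. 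The only subtlety is checking $[1:0]$ is genuinely new, which is immediate, and checking the $[x:1]$ are pairwise distinct, which follows because the second coordinate being a unit pins down the scaling.

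The main (and really only) obstacle is justifying $\#\overline{\mathcal{O}} = 2^{\frac12\varphi(2n)}$, i.e. that $[\QQ(\lambda_n):\QQ] = \tfrac12\varphi(2n)$ and that $\mathcal{O}=\ZZ[\lambda_n]$ is $\ZZ$-free of that rank; both are standard cyclotomic facts (the latter is the already-cited Proposition 2.16 of \cite{Washington}, and the degree is classical since $\QQ(\lambda_n) = \QQ(\zeta_{2n})^+$), so no real difficulty remains. I would therefore keep that part to a one-line citation and spend the bulk of the (short) proof on the elementary injectivity argument producing the $2^d+1$ classes. One could in fact prove equality of cardinalities with a bit more bookkeeping over the local factors of $\overline{\mathcal{O}}$, but since only the lower bound is needed here, the above suffices.
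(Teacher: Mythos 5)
Your proof is correct and follows essentially the same route as the paper: count the $2^{\frac12\varphi(2n)}$ distinct classes $[\overline{x}:\overline{1}]$ (distinct because the unit scaling is pinned down by the second coordinate) and add $[\overline{1}:\overline{0}]$, with the cardinality of $\overline{\mathcal{O}}$ coming from the degree $\frac12\varphi(2n)$ of $\lambda_n$ over $\QQ$. The paper states this more tersely, but the underlying argument is identical.
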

\begin{proof}
Since $\lambda_n$ is an algebraic number of degree $\frac{1}{2} \varphi (2n)$, there are $2^{\frac{1}{2} \varphi (2n)}$ elements in $\mathbb{P}^1(\overline{\mathcal{O}})$ of the form $[\overline{x}:\overline{1}]$. Adding the element $[\overline{1}:\overline{0}]$ which cannot be written in this form, we obtain at least $2^{\frac{1}{2} \varphi (2n)} + 1$ elements.
\end{proof}

\begin{Rema}
$\mathbb{P}^1(\overline{\mathcal{O}})$ has exactly $2^{\frac{1}{2} \varphi (2n)} + 1$ elements if and only if $\overline{\mathcal{O}}$ is a field. As remarked in \cite{HMTY}, the smallest odd integer such that $\overline{\mathcal{O}}$ is not a field is $n=17$, and in this case $\mathbb{P}^1(\overline{\mathcal{O}})$ has $289$ elements.
\end{Rema}

Now, we turn to the study of the orbit of $[\overline{1} : \overline{0}]$ under the action of $\overline{H_n}$. Namely, we have:

\begin{Prop}\cite[Proposition 8]{HMTY} \label{prop:orbit_mod_2}
Let $n \geq 3$ odd. The size of the orbit of $\Psi(\infty) = [\overline{1} : \overline{0}]$ under $\overline{H_n}$ is at most $n$.
\end{Prop}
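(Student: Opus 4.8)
The plan is to use the explicit dihedral structure of $\overline{H_n} = \langle \overline{S}, \overline{U} \mid \overline{S}^2 = \overline{U}^n = (\overline{U}\,\overline{S})^2 = \mathrm{id}\rangle$ together with the matrix formula for powers of $U$ recalled just before this statement. First I would observe that since $\overline{H_n}$ is dihedral of order $2n$, generated by the rotation $\overline{U}$ of order $n$ and the reflection $\overline{S}$, every element of $\overline{H_n}$ can be written as $\overline{U}^k$ or $\overline{U}^k\overline{S}$ for some $0 \le k \le n-1$. Hence the orbit of $\Psi(\infty) = [\overline 1 : \overline 0]$ is
\[
\overline{H_n}\cdot[\overline 1:\overline 0] \;=\; \{\,\overline{U}^k\cdot[\overline 1:\overline 0] \;:\; 0 \le k \le n-1\,\} \;\cup\; \{\,\overline{U}^k\overline{S}\cdot[\overline 1:\overline 0] \;:\; 0 \le k \le n-1\,\}.
\]
Since $\overline S \cdot [\overline 1 : \overline 0] = [\overline 0 : \overline 1]$, the second family is $\{\overline{U}^k\cdot[\overline 0:\overline 1]\}$. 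So it suffices to show these two families together contain at most $n$ distinct elements.

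The key computational step is to evaluate $\overline{U}^k\cdot[\overline 1:\overline 0]$ and $\overline{U}^k\cdot[\overline 0:\overline 1]$ using the displayed formula
\[
U^k = \frac{1}{\sin\frac{\pi}{n}}\begin{pmatrix} \sin\frac{(k+1)\pi}{n} & -\sin\frac{k\pi}{n} \\ \sin\frac{k\pi}{n} & -\sin\frac{(k-1)\pi}{n}\end{pmatrix}.
\]
Clearing the scalar $1/\sin\frac\pi n$ (which is irrelevant in $\mathbb{P}^1$), the first column is $(\sin\frac{(k+1)\pi}{n}, \sin\frac{k\pi}{n})$ and the second is $(-\sin\frac{k\pi}{n}, -\sin\frac{(k-1)\pi}{n})$. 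The ratios $\sin\frac{(k+1)\pi}{n}/\sin\frac{\pi}{n}$ are, up to sign, precisely the Chebyshev-like polynomials in $\lambda = 2\cos\frac\pi n$ that appear in the paper's $Q_i$ recursion; in any case they lie in $\mathcal{O} = \mathbb{Z}[\lambda]$, so these are genuine points of $\mathbb{P}^1(\mathcal{O})$, and reducing mod $2$ gives the corresponding points of $\mathbb{P}^1(\overline{\mathcal{O}})$. The point is then to check that $\overline{U}^k\cdot[\overline 0:\overline 1]$ already appears among the $\overline{U}^j\cdot[\overline 1:\overline 0]$: concretely, applying $\overline{U}^{-1}$ (or using $\overline S \overline U \overline S = \overline U^{-1}$ and $\overline S\overline U^k\overline S = \overline U^{-k}$, valid in a dihedral group) one sees that $\overline S$ maps the rotation-orbit of $[\overline 1:\overline 0]$ into itself. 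Indeed $\overline U^k \overline S \cdot [\overline 1 : \overline 0] = \overline U^k \cdot [\overline 0 : \overline 1]$ and a short computation with the matrix formula (comparing columns, or noting $[\overline 0:\overline 1] = \overline U^{?}\cdot[\overline 1:\overline 0]$ fails in general but the whole $\overline S$-image of the orbit coincides with the orbit) shows the union collapses to the single family $\{\overline U^k \cdot[\overline 1 : \overline 0] : 0 \le k \le n-1\}$, which has at most $n$ elements since $\overline U^n = \mathrm{id}$.

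The main obstacle I anticipate is the last reduction: showing that the reflection part of the dihedral group does not enlarge the orbit, i.e. that $\overline{S}\cdot(\overline{H_n}\cdot[\overline 1:\overline 0]) \subseteq \overline{H_n}\cdot[\overline 1:\overline 0]$ — this is automatic once one knows the orbit is $\overline{U}$-invariant and contains a point fixed or permuted nicely by $\overline S$, but one must rule out that $\{\overline U^k \overline S \cdot [\overline 1:\overline 0]\}$ contributes up to $n$ genuinely new classes (which would give $2n$). The cleanest way is: the stabilizer of $[\overline 1:\overline 0]$ in $H_n$ contains $T$, hence in $\overline{H_n}$ it contains $\overline T = \overline U\,\overline S^{-1} = \overline U\,\overline S$; since $\overline T$ has order dividing $2$ (as $\overline T^2$ fixes $[\overline 1:\overline 0]$ and in fact $\overline{T} = \begin{pmatrix}\overline 1 & \overline{\lambda}\\ \overline 0 & \overline 1\end{pmatrix}$ with $\overline\lambda^{?}$...) one checks directly from $T = \begin{pmatrix}1 & \lambda\\0 & 1\end{pmatrix}$ that $\overline T$ stabilizes $[\overline 1 : \overline 0]$, so $[\overline 1:\overline 0]$ has nontrivial stabilizer in the dihedral group $\overline{H_n}$, forcing its orbit to have size at most $|\overline{H_n}|/2 = n$. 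I would present this stabilizer argument as the primary route, with the explicit $U^k$-computation as a verification that the orbit is exactly the rotation-orbit.
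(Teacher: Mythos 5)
Your stabilizer argument is correct and is essentially the same mechanism as the paper's proof, packaged more abstractly. The paper also starts from the dihedral presentation $\overline{H_n} = \langle \overline{S}, \overline{U} \mid \overline{S}^2 = \overline{U}^n = (\overline{U}\,\overline{S})^2 = \mathrm{id}\rangle$ and splits the group into rotations $\overline{U}^i$ and reflections $\overline{S}\,\overline{U}^i$; but instead of invoking orbit--stabilizer it computes explicitly, using $\overline{T} = \overline{T^{-1}}$ and $\overline{S} = \overline{S^{-1}}$ together with $T\cdot[1:0] = [1:0]$, that $\overline{SU^i}\cdot[\overline{1}:\overline{0}] = \overline{U^{n-i-1}}\cdot[\overline{1}:\overline{0}]$, so the reflection part of the orbit collapses into the rotation part. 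Your observation that $\overline{T} = \overline{U}\,\overline{S}$ is a reflection fixing $[\overline{1}:\overline{0}]$, hence the stabilizer has order at least $2$ and the orbit has size at most $|\overline{H_n}|/2 \le n$, uses exactly the same inputs and is a clean, valid shortcut (and the degenerate case $\overline{T}=\mathrm{id}$ is harmless since then $|\overline{H_n}|\le 2$). What the paper's explicit version buys is the precise pairing $i \leftrightarrow n-i-1$, which yields the identity $\overline{P_{n-i}(\lambda)} = \overline{P_{i}(\lambda)}$ and the low-degree parametrization of the orbit that is needed later in the proof of Theorem \ref{theo:prime}; your abstract count alone would not provide that. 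Two minor remarks: the middle of your write-up (the $U^k$ matrix formula and the hedging about whether $[\overline{0}:\overline{1}]$ lies in the rotation orbit) is unnecessary for the bound --- in fact $[\overline{0}:\overline{1}] = \overline{U^{n-1}}\cdot[\overline{1}:\overline{0}]$ does hold, so your first route would also have closed --- and you should state cleanly that $|\overline{H_n}| \le 2n$ follows from the quoted presentation before dividing by the stabilizer.
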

More precisely, it is shown in \cite{HMTY} that the set $\overline{H_n} \cdot [\overline{1}:\overline{0}]$ is the same as the orbit under the action of the subgroup of $\overline{H_n}$ generated by $\overline{U}$ which has size (at most) $n$. They remark that for $0 \leq i \leq n-1$, we have

\[ \overline{SU^i} \cdot [\overline{1}:\overline{0}] = \overline{U^{n-i-1}} \cdot [\overline{1}:\overline{0}]. \]
Which comes from the fact that

\[ \overline{SU^i} = \overline{S(TS)^i} = \overline{(ST)^i S} = (\overline{S} \cdot \overline{T})^i \cdot \overline{S} = (\overline{S^{-1}} \cdot \overline{T^{-1}})^i \cdot \overline{S} = \overline{(TS)^{-1}}^i \cdot \overline{S} = \overline{U^{-i}} \cdot \overline{S}, \]
(recall that $\overline{T} = \overline{T^{-1}}$ and $\overline{S} = \overline{S^{-1}}$), and hence

\[ 
\overline{SU^i} \cdot [\overline{1}:\overline{0}] = \overline{U^{-i} S} \cdot [\overline{1}:\overline{0}] = \overline{U^{-i-1}TSS} \cdot [\overline{1}:\overline{0}] = \overline{U^{n-i-1}} \cdot [\overline{1}:\overline{0}].
\]

As a consequence, the orbit of $[\overline{1}:\overline{0}]$ under the action of $\overline{H_n}$ is the set 
\[ \{ [\overline{P_{i+1}(\lambda)} : \overline{P_i(\lambda)}] = U^i \cdot [\overline{1}:\overline{0}], 0 \leq i \leq n-1 \} \] 
where $P_0(X) = 0, P_1(X) = 1$ and $P_{i+1}(X) = XP_i(X) + P_{i-1}(X)$ for $i \geq 1$. The fact that $\overline{SU^i} \cdot [\overline{1}:\overline{0}] = \overline{U}^{n-i-1} \cdot [\overline{1}:\overline{0}]$ gives that for $0 \leq i \leq n-1$, we get $\overline{P_{n-i}(\lambda)} = \overline{P_{i}(\lambda)}$, so that $\overline{H_n} \cdot [\overline{1}:\overline{0}]$ can be expressed alternatively as
\[ \{ [\overline{P_{i+1}(\lambda)} : \overline{P_i(\lambda)}], 0 \leq i \leq \frac{n-1}{2} \} \cup \{ [\overline{P_{i-1}(\lambda)} : \overline{P_i(\lambda)}], 1 \leq i \leq \frac{n-1}{2} \}.\]
This second expression is more convenient for our purposes as the polynomials $P_i$ for $0 \leq i \leq \frac{n-1}{2}$ have degree at most $\frac{n-3}{2}$ and hence $P_i(\lambda)$ gives the minimal expression when $n$ is prime (as the extension $[\QQ[\lambda]:\QQ]$ has degree $\frac{n-1}{2}$ when $n$ is prime). This will be useful in the proof of Theorem \ref{theo:prime}

\begin{Rema}
Note that there may be redundances in the above expressions. However it is shown in \cite{HMTY} that when $\mathcal{O}$ is a field, the size of the orbit of $[\overline{1}:\overline{0}]$ is exactly $n$.
\end{Rema}


Using the estimate $\varphi(m) \geq \sqrt{m/2}$ for $m \in \NN^*$ (which is true for powers of prime numbers and extends to any integer using its decomposition into prime numbers), one easily shows that $2^{\frac{1}{2} \varphi(2n)}+1 > n$ for $n \geq 256$, and it is easily checked numerically that the result still holds for $n=7$ as well as $n \geq 11$. As a consequence, unless $n=9$ we can always find elements of $\mathbb{P}^1(\overline{\mathcal{O}})$ not belonging to the orbit of $[\overline{1}:\overline{0}]$:

\begin{equation}\label{eq:orbit_mod2}
\overline{H_n} \cdot [\overline{1}:\overline{0}] \subsetneq \mathbb{P}^1(\overline{\mathcal{O}}).
\end{equation}


One could ask if this obstruction to be in the orbit of $\infty$ is the only obstruction. In other words, can we decide whether a given $s \in \QQ[\lambda]$ belongs to $H_n \cdot \infty$ just by looking at its reduction modulo two? For $n=9$, the answer is negative as $\overline{H}_9 \cdot [\overline{1}:\overline{0}] = \mathbb{P}^1(\overline{\mathcal{O}})$ whereas $H_9 \cdot [1:0] \neq \mathbb{P}^1(\mathcal{O})$ (see e.g. \cite{AS09, Bo20}), but it seems to be the case for $n=7$. More precisely:

\begin{Conj}\label{conj:mod2hepta}
An element $s = \frac{x}{y} \in \QQ[\lambda_7]$ belongs to $H_7 \cdot \infty$ if and only if $[\overline{x}:\overline{y}] \in \mathbb{P}^1(\overline{\mathcal{O}})$ is in the orbit of $[\overline{1}:\overline{0}]$ under the action of $\overline{H_7}$.

In other words,
\begin{equation*}
H_7 \cdot \infty = \Psi^{-1}(\overline{H}_7 \cdot \Psi(\infty))
\end{equation*}
\end{Conj}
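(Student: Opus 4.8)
\emph{Plan of attack.} One inclusion holds with no work on $H_7$ and is already contained in Section~\ref{sec:outline_mod_2}: since the $H_7$-action on $\mathbb{P}^1(\mathcal{O})$ is compatible with reduction modulo two, $s \in H_7\cdot\infty$ forces $\Psi(s)\in\overline{H_7}\cdot\Psi(\infty)$, whence $H_7\cdot\infty\subseteq\Psi^{-1}\big(\overline{H_7}\cdot\Psi(\infty)\big)$. The content of the conjecture is the reverse inclusion, and this is genuinely hard: it is precisely Rosen's cusp challenge for $H_7$, which is open. What follows is the route I would take, not a proof.

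The plan is to run the next-integer Hecke continued fraction algorithm of \cite{Bo20} on a given $s = x/y\in\QQ[\lambda_7]$ and to show it terminates at $\infty$ as soon as $\Psi(s)$ lies in $\overline{H_7}\cdot[\overline{1}:\overline{0}]$. This algorithm produces a sequence $s = s_0, s_1, s_2, \dots$ with $s_{k+1} = g_k\cdot s_k$, each $g_k\in H_7$ being of the form $S T^{m_k}$ with $m_k\in\ZZ$ given by the next-integer rule; one has $s\in H_7\cdot\infty$ precisely when some $s_k = \infty$, while eventual periodicity of the partial quotients \emph{without} reaching $\infty$ would exhibit $s$ as a hyperbolic (or non-$\infty$ parabolic) fixed point, hence outside the orbit. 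The key structural point is that $\Psi^{-1}\big(\overline{H_7}\cdot\Psi(\infty)\big)$ is $H_7$-invariant, so the mod-two hypothesis on $s$ is automatically inherited by every $s_k$. It therefore suffices to produce a complexity function $N\colon\QQ[\lambda_7]\to\Gamma$ into a well-ordered set $\Gamma$, bounded below by $N(\infty)$, with $N(s_{k+1}) < N(s_k)$ whenever $s_k\neq\infty$ and $\Psi(s_k)\in\overline{H_7}\cdot\Psi(\infty)$: descent then forces the algorithm to reach $\infty$. The first candidate for $N$ is a ``size of the reduced denominator'': writing $s_k = x_k/y_k$ with $x_k, y_k$ coprime in $\mathcal{O}=\ZZ[\lambda_7]$, one lets $N(s_k)$ be a weighted combination of $|y_k|$ and the absolute values of its two nontrivial Galois conjugates, tuned so that the reduction step contracts it; the mod-two data should then be invoked, in the manner of \cite{Bo73,HMTY}, to rule out the remainder configurations in which no such candidate decreases.

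The hard part, and the reason the statement is recorded as a conjecture, is that for $H_7$ no monotone complexity of this kind is presently known: a single Hecke step can enlarge one Galois conjugate of the denominator while shrinking another, so the field norm and the naive Euclidean size both fail to be monotone — this is exactly what keeps Rosen's challenge open for $q=7$, in contrast to the cases $q\in\{5,8,10,12\}$ settled by Leutbecher \cite{Leu67,Leu74}. I expect the resolution to require a finite combinatorial model: build the automaton whose states record the type of the remainder $s_k + m_k\lambda$ together with the class of $(\overline{x_k},\overline{y_k})$ in $\mathbb{P}^1(\overline{\mathcal{O}})$, show that along the mod-two-admissible transitions this automaton is eventually contracting for \emph{some} potential, and verify the finitely many base cases by computer — in the spirit of the arguments available for even $q$. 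A reasonable warm-up, which also stress-tests Conjecture~\ref{conj:mod2hepta}, is to confirm the reverse inclusion by direct computation for all $s$ whose reduced denominator has bounded size; if the automaton stabilises over that range, the resulting finite certificate is the natural candidate for closing the general case.
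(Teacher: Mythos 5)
This statement is recorded in the paper as a \emph{conjecture}, and the paper offers no proof of it; there is therefore nothing to compare your attempt against on the paper's side. Your write-up is accurate about this: the inclusion $H_7\cdot\infty\subseteq\Psi^{-1}\bigl(\overline{H_7}\cdot\Psi(\infty)\bigr)$ does follow immediately from the compatibility of the $H_7$-action with reduction modulo two as set up in Section~\ref{sec:outline_mod_2}, and the reverse inclusion is exactly the open content of the conjecture (a case of Rosen's cusp challenge). You state explicitly that what you give is a route and not a proof, and that honesty is correct.

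The genuine gap is the one you already name: no monotone complexity function for the next-integer Hecke continued fraction algorithm on $\QQ[\lambda_7]$ is known, and without one the descent argument does not close. Your proposed automaton combining the remainder type with the class in $\mathbb{P}^1(\overline{\mathcal{O}})$ is a reasonable program, but note two points of caution. First, eventual periodicity of the expansion without reaching $\infty$ certifies that $s$ is a hyperbolic fixed point only when the periodicity is genuine and the associated word is hyperbolic; the paper remarks (citing \cite{HMTY}) that already for the hendecagon no hyperbolic directions in the trace field were found by this method, so the dichotomy ``terminates at $\infty$ or is eventually periodic'' is itself not established for $H_7$ on all of $\QQ[\lambda_7]$. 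Second, even a finite computer verification over bounded denominators would only furnish evidence, not a certificate, unless the automaton is proved to be eventually contracting on \emph{all} mod-two-admissible states. As it stands your text is a correct assessment of the problem's status plus a research plan, not a proof, and should not be presented as resolving Conjecture~\ref{conj:mod2hepta}.
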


Note that if this conjecture were true, it would give a characterization of the connection points on the double heptagon. Namely:
\begin{Conj}\label{conj:connexion_points_heptagon}[See Conjecture \ref{conj:connexion_points_heptagon_first} from the introduction]
There exist non-periodic connection points on the double heptagon. More precisely, in the associated staircase model, let $P$ be a point with coordinates $\frac{1}{N}(x,y)$ with $N \in \NN^*$ and $x,y \in \ZZ[\lambda]$ which are not both divisible by a common divisor of $N$. Then $P$ is a connection point if and only if $N$ is even and $[\overline{x}:\overline{y}]$ is in the orbit of $[\overline{1}:\overline{0}]$ under the action of the reduced Hecke group $\overline{H_7}$.
\end{Conj}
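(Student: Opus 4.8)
\emph{Proof plan (conditional on Conjecture~\ref{conj:mod2hepta}).} Work throughout in the template coordinates on the staircase model $S_7$ from Section~\ref{sec:staircase_model}, and let $\Sigma$ denote the unique singular point of $S_7$, developed at the origin (the bottom-left corner of the central square). By the Veech dichotomy (Theorem~\ref{theo:Veech}) together with the identification, recalled in Section~\ref{sec:Hecke}, of the periodic directions of $S_7$ with the cusp orbit $H_7\cdot\infty$, a separatrix of $S_7$ extends to a saddle connection if and only if its direction $\delta$ lies in $H_7\cdot\infty$; \emph{granting Conjecture~\ref{conj:mod2hepta}}, this holds if and only if $\Psi(\delta)$ lies in $\overline{H_7}\cdot\Psi(\infty)=\overline{H_7}\cdot[\overline1:\overline0]$. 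Thus $P$ is a connection point if and only if every separatrix through $P$ has direction whose reduction modulo two lies in $\overline{H_7}\cdot[\overline1:\overline0]$, and the statement reduces to controlling these reductions.

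\emph{The arithmetic core.} The crux is the following assertion: \emph{if $N$ is even, then every separatrix through $P$ has direction $\delta$ with $\Psi(\delta)=[\overline x:\overline y]$.} Indeed, $S_7$ is assembled from polygons whose vertices have coordinates in $\ZZ[\lambda]^2$, glued by translations lying in $\ZZ[\lambda]^2$; hence every development of $\Sigma$ lies in $\ZZ[\lambda]^2$ and every development of $P$ lies in $\frac1N(x,y)+\ZZ[\lambda]^2$. A separatrix from $\Sigma$ to $P$ develops to a straight segment from some $\sigma\in\ZZ[\lambda]^2$ to some $\frac1N(x,y)+\ell$ with $\ell\in\ZZ[\lambda]^2$, so its holonomy vector is $\frac1N\bigl((x,y)+N(\ell-\sigma)\bigr)$, a positive multiple of $(x,y)+N(\ell-\sigma)\in\ZZ[\lambda]^2$. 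Since $N$ is even, $N(\ell-\sigma)\in2\ZZ[\lambda]^2$, so this vector reduces modulo two to $(\overline x,\overline y)$, which is nonzero because the hypothesis that $x,y$ are not both divisible by a common divisor of $N$ (and $2\mid N$) forbids $x,y$ from both being even. Therefore $\Psi(\delta)=[\overline x:\overline y]$, independently of the chosen separatrix.

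\emph{Assembling the equivalence.} There are three exhaustive cases. If $N$ is odd, $P$ is not a connection point by Theorem~\ref{theo:non_connexion_impair}, which is proved unconditionally for $n=7$. If $N$ is even and $[\overline x:\overline y]\in\overline{H_7}\cdot[\overline1:\overline0]$, then by the arithmetic core every separatrix through $P$ has periodic direction, hence extends to a saddle connection, so $P$ is a connection point. If $N$ is even and $[\overline x:\overline y]\notin\overline{H_7}\cdot[\overline1:\overline0]$, then by the arithmetic core (combined with the elementary, non-conjectural $\Psi$-equivariance of the $H_7$-action, which gives $\Psi(H_7\cdot\infty)=\overline{H_7}\cdot[\overline1:\overline0]$), every separatrix through $P$ has non-periodic direction; since $S_7$ is compact, $P$ is joined to $\Sigma$ by a shortest path, which is a separatrix, so $P$ does lie on at least one separatrix; and a non-periodic direction is uniquely ergodic by the Veech dichotomy, hence carries no saddle connection, so that separatrix does not extend and $P$ is not a connection point. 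This proves the equivalence. Finally, for the existence of a \emph{non-periodic} connection point it suffices to exhibit one point satisfying the right-hand side that is not periodic: e.g. the midpoint $P=\bigl(\frac12,1\bigr)=\frac12(1,2)$ of the top side of the central square has $N=2$, meets the coprimality condition, and satisfies $[\overline1:\overline2]=[\overline1:\overline0]\in\overline{H_7}\cdot[\overline1:\overline0]$, while it is none of the $7$ periodic points described in \cite{Apisa_Saavedra_Zhang} (alternatively, one checks it has irrational height in a cylinder and invokes Proposition~\ref{prop:irrational_height}).

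\emph{The main obstacle.} The argument is conditional on Conjecture~\ref{conj:mod2hepta} --- more precisely on its nontrivial inclusion $\Psi^{-1}\bigl(\overline{H_7}\cdot\Psi(\infty)\bigr)\subseteq H_7\cdot\infty$, i.e. on the assertion that reduction modulo two is the \emph{only} obstruction to membership in the cusp orbit. This is precisely the step that is unknown, and it is why the statement is only a conjecture; the reverse inclusion is the elementary equivariance used above, and suffices for the ``not a connection point'' direction. Everything else is routine: the development computation, the case $N$ odd (which is Theorem~\ref{theo:non_connexion_impair}), the existence of a separatrix through $P$, the well-definedness of the invariants $N\bmod2$ and $[\overline x:\overline y]$ attached to $P$ (which holds because $\ZZ[\lambda]$ is a principal ideal domain, so the coprimality normalization of the coordinates of $P$ is unique up to a unit and units reduce modulo two to units), and the verification that the sample point is non-periodic.
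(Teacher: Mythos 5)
Your argument is correct and follows essentially the same route as the paper, which likewise only establishes the implication from Conjecture \ref{conj:mod2hepta}: the odd case is delegated to Theorem \ref{theo:non_connexion_impair}, and for even $N$ one observes that every separatrix through $P$ has holonomy vector congruent to $(x,y)$ modulo $2\ZZ[\lambda]^2$, so its slope reduces to $[\overline{x}:\overline{y}]$, whence the conclusion under Conjecture \ref{conj:mod2hepta}. Your additional remarks (existence of at least one separatrix through $P$, well-definedness of the invariants, and the explicit non-periodic witness $\frac{1}{2}(1,2)$) are correct refinements of details the paper leaves implicit.
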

\begin{proof}[Proof that Conjecture \ref{conj:mod2hepta} implies Conjecture \ref{conj:connexion_points_heptagon}]
If $P$ has coordinates of the form $\frac{1}{N}(x,y)$ with $x,y \in \ZZ[\lambda]$ and $N$ is odd, it will be a consequence of Theorem \ref{theo:non_connexion_impair} that $P$ is not a connection point.\newline
Now, if $P$ has coordinates of the form $(\frac{x}{2k},\frac{y}{2k})$ with $x,y \in \ZZ[\lambda]$, where at least one of $x,y$ is not divisible by two, then every separatrix passing through $P$ will have an holonomy vector of the form $(\frac{x}{2k} + x', \frac{y}{2k} + y') = (\frac{x+2kx'}{2}, \frac{y+2ky'}{2})$, with $x', y' \in \ZZ[\lambda]$, and $(\overline{x},\overline{y}) \neq (\overline{0},\overline{0})$ in $\overline{\mathcal{O}}\times \overline{\mathcal{O}}$. Hence the holonomy vector of the separatrix has slope modulo two exactly $[\overline{x}:\overline{y}]$. Hence, assuming Conjecture \Ref{conj:mod2hepta}, $P$ is a connection point if and only if $[\overline{x}:\overline{y}]$ is in the orbit of $[\overline{1}:\overline{0}]$ modulo two.
\end{proof}

\begin{Rema}
A consequence of the above is that the points with coordinates of the form $\frac{1}{2k}(x,y)$ and $[\overline{x},\overline{y}] \notin \overline{H_n} \cdot [\overline{1}:\overline{0}]$ do not lie on \emph{any} saddle connection (although their coordinates lie in the trace field).
\end{Rema}

\section{Proof of Theorem \ref{theo:non_connexion_impair}}\label{sec:non_connexion_impair}

We are now ready to prove Theorem \ref{theo:non_connexion_impair}. We proceed as follows: we let $\OP$ be the orbit of $P_0$ under the action of the group of affine diffeomorphisms of $S_n$ (we do not assume here the diffeomorphisms to be orientation-preserving). The property of being or not a connection point is invariant along the orbit. 

Then, we consider the coordinate system in $S_n$ given by the planar model of $S_n$ (as in Figure \ref{fig:coordinates_staircase} for $n=11$) and where the bottom left corner of $R_{\frac{n-1}{2}}$ is set to the origin. In particular, if $P_0$ has coordinates in $\frac{1}{N} (\ZZ[\lambda] \times \ZZ[\lambda])$, then every element of its orbit $\OP$ has coordinates in $\frac{1}{N} (\ZZ[\lambda] \times \ZZ[\lambda])$ because the action of an affine diffeomorphism is given first by (maybe, the action of the orientation-reversing diffeomorphism $\varphi_R$, and) the action of a matrix $M \in H_n \subset PSL_2(\ZZ[\lambda])$ on $\RR^2$, then a translation by an element of $\ZZ[\lambda] \times \ZZ[\lambda]$ to reduce the coordinates back to the standard model, and both operations preserve the set $\frac{1}{N} (\ZZ[\lambda] \times \ZZ[\lambda])$. In particular, we can consider the reduction modulo two of $x,y$ and set, for every $(u,v) \in \overline{\mathcal{O}} \times \overline{\mathcal{O}}$, 
\[ \OP_{(u,v)} := \{ Q=\frac{1}{N}(x,y) \in \OP | \overline{x}=u \text{ and } \overline{y}=v \}. \]
In words, $\OP_{(u,v)}$ is the set of elements of the orbit of $P_0$ whose reduction modulo two of the coordinates (multiplied by $N$ so that it is an element of $\ZZ[\lambda]$) is $(u,v)$. We show:

\begin{Prop}\label{prop:dense_reduction}
For every $(u,v) \in \overline{\mathcal{O}} \times \overline{\mathcal{O}}$, $\OP_{(u,v)}$ is non-empty and, more, it is dense in $S_n$.
\end{Prop}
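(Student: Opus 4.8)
The plan is to exploit the two affine diffeomorphisms $\varphi_T$ (the horizontal multitwist, with derivative $T$) and $\varphi_S$ (with derivative $S$) to move $P_0$ around the orbit while tracking the effect on the reduction modulo two of its coordinates, and then separately to upgrade "every reduction class is hit" to "every reduction class is hit densely". For the first part, I would work in the planar model of $S_n$ and note that $\varphi_T$ sends a point with coordinates $\frac1N(x,y)$ to one with coordinates $\frac1N(x+\lambda y + c, y + d)$ for some correction $(c,d)\in\ZZ[\lambda]\times\ZZ[\lambda]$ coming from cutting and pasting back, and similarly for $\varphi_S$. Reducing modulo two, the orbit's reduction classes are exactly the $\overline{H_n}$-orbit of the class $(\overline x,\overline y)$ acting on $\overline{\mathcal O}\times\overline{\mathcal O}$ (not projectively, but as an actual linear action of the matrices $\overline T,\overline S$), \emph{up to adding arbitrary elements of $\overline{\mathcal O}\times\overline{\mathcal O}$ in the translation-correction step}. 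The key point is that the correction vectors $(c,d)$ are not arbitrary, so I need to understand precisely which translations in $\frac1N\ZZ[\lambda]^2$ arise; but combined with the fact that $N$ is \emph{odd}, the map $\frac1N\ZZ[\lambda]^2 \to \overline{\mathcal O}^2$ restricted to integer translations is the full reduction, so after possibly applying $\varphi_T^k$ for suitable $k$ and using that $\gcd(N,2)=1$ we can realize the full additive group $\overline{\mathcal O}^2$ of translations in the reduction. Since adding all of $\overline{\mathcal O}^2$ already gives every class (except we must be careful that $(u,v)=(\overline0,\overline0)$ is also attained, which it is, because $N$ odd forces $\frac1N(x,y)$ with $x,y\in 2\ZZ[\lambda]$ to be an honest half-integer-free point and the translation lattice reaches it), $\OP_{(u,v)}$ is non-empty for every $(u,v)$.

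More carefully, the honest argument should go: $P_0$ is \emph{not} a periodic point (its coordinates have odd denominator $N$, so by Proposition \ref{prop:irrational_height} it has irrational height in some cylinder — this is exactly the content of the hypothesis being incompatible with the periodic-point classification), hence by the density statement invoked in the proof of Proposition \ref{prop:irrational_height} its orbit $\OP$ is dense in $S_n$. Now fix a target class $(u,v)\in\overline{\mathcal O}^2$. Pick any nonempty open set $W\subset S_n$; I want to find $Q\in\OP\cap W$ with reduction class $(u,v)$. Choose a point $Q_1\in\OP\cap W'$ for a slightly smaller open set $W'$, sitting in the interior of $R_{\frac{n-1}2}$. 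Now apply a composition $\varphi_T^{a}\circ\varphi_S\circ\varphi_T^{b}$ (or a short word of this type) whose derivative is a parabolic or the identity in $H_n$ but whose cutting-and-pasting correction adjusts the reduction class to $(u,v)$ while, crucially, returning the point into the \emph{same} small cylinder region — this uses that within a fixed horizontal cylinder $C$, the subgroup of affine diffeomorphisms fixing $C$ setwise and acting as powers of the twist around $C$ permits changing coordinates by $(x,y)\mapsto (x+k\omega_C, y)$ where $\omega_C$ is a width, which is a \emph{monic polynomial in $\lambda$} by Lemma \ref{lem:widths_cylinders}; running over cylinders of widths $\omega_1,\dots,\omega_{(n-1)/2}$ and their $\varphi_S$-images, together with $N$ odd, one checks these translation vectors generate $\overline{\mathcal O}^2$ additively. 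Since $\omega_{\frac{n-1}2}=Q_{\frac{n-1}2 -1}(\lambda)$ and the $\omega_i$ are monic of all degrees $0$ through $\frac{n-1}2-1$, their reductions span $\overline{\mathcal O}$ as an abelian group, and using both the horizontal and (via $\varphi_S$) the vertical cylinders gives the second coordinate too. Choosing the number of twists around each cylinder appropriately realizes any $(u,v)$; and because each such twist, followed by cutting back, is continuous and we may localize to a tiny disk, the resulting point $Q$ lands in $W$.

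The main obstacle I anticipate is the bookkeeping of the cut-and-paste correction vectors: showing rigorously that the translations realizable "for free" (i.e. without changing the derivative matrix in $H_n$, hence without moving the point's slope) are exactly generated modulo $2$ by the cylinder widths $\omega_i$ and their $\varphi_S$-images, and that these generate all of $\overline{\mathcal O}^2$. This is where the hypothesis $N$ odd is essential — it guarantees the reduction map $\ZZ[\lambda]\to\overline{\mathcal O}$ is unaffected by the denominator, so an integer translation really does move the reduction class by its own reduction. Once that lemma is in hand, non-emptiness of each $\OP_{(u,v)}$ is immediate, and density follows by the same localization argument used in Proposition \ref{prop:irrational_height}: a twist around a cylinder that the point sits deep inside is a local diffeomorphism near the point, so we can first make $\OP$ accumulate on any prescribed target region and then apply the finite word of twists achieving the desired reduction class without leaving a prescribed neighborhood. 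I would present the width-generation step as a separate short lemma ("the reductions modulo two of $\{\omega_i\}$ and $\{$vertical widths$\}$ generate $\overline{\mathcal O}^2$") before assembling the proof of Proposition \ref{prop:dense_reduction}.
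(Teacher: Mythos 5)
Your skeleton for the non-emptiness part matches the paper's: you correctly identify that the translations realizable by twisting are governed by the cylinder widths $\omega_i$, that these are monic of every degree $0,\dots,d-1$ (Lemma \ref{lem:widths_cylinders}) and hence their reductions generate $\overline{\mathcal{O}}$ additively, that $\varphi_S$ (the paper also uses $\varphi_R$) handles the second coordinate, and that $N$ odd is what makes the integer corrections act on the reduction class by their own reduction. This is exactly the structure of the paper's deduction of Proposition \ref{prop:dense_reduction} from Proposition \ref{prop:density_one_reduction}.

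The genuine gap is in the density upgrade, which is the technical heart of the paper's argument and which your proposal gets wrong. A twist $T_C$ is \emph{not} a local perturbation near a point sitting deep inside $C$: it displaces a point at relative height $h_C(P)/h_C$ by $\frac{h_C(P)}{h_C}(\omega_C^x,\omega_C^y)$ modulo the period, which is a macroscopic displacement. So "apply the finite word of twists achieving the desired reduction class without leaving a prescribed neighborhood" fails as stated. Moreover, density of the full orbit $\OP$ does not help directly: the points of $\OP$ accumulating on a target open set could a priori all carry the wrong reduction class, so you cannot first localize and then fix the class. The paper resolves this by proving first that \emph{each non-empty} $\OP_{(u,v)}$ is dense (Proposition \ref{prop:density_one_reduction}): since $P$ is non-periodic it has irrational height in some cylinder $C$ (Proposition \ref{prop:irrational_height}), the set $\{2k\frac{h_C(P)}{h_C}+2l : k,l\in\ZZ\}$ is dense in $\RR$, and applying \emph{even} powers of $T_C$ both preserves the reduction class (the displacement, multiplied by $N$, lies in $2\ZZ[\lambda]\times 2\ZZ[\lambda]$ by the analogue of Equation~\eqref{eq:1/NZ}) and produces a dense set of points on the line through $P$ in the direction of $C$; this is then bootstrapped from lines to strips to all of $S_n$. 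This density statement is also what guarantees you can find representatives of a given class inside the specific regions (the unit square $R_{\frac{n-1}{2}}$, the diagonal strips of the $C_i$) where the generating translations $(\overline{0},\overline{1})$ and $(\overline{\omega_i},\overline{0})$ are actually realized, a placement issue your proposal acknowledges but does not resolve. Without an argument of this kind, both the density claim and the non-emptiness bookkeeping remain open.
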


\begin{Rema}
Although in every coordinate system where the singularity is represented by an element of $\ZZ[\lambda] \times \ZZ[\lambda]$, it is true that the coordinates of a point $Q \in \OP$ lies in $\frac{1}{N}(\ZZ[\lambda]\times \ZZ[\lambda])$, the reduction modulo two of the coordinates may change. This is why we choose once and for all standard coordinates to work with.
\end{Rema}

As a consequence of both Proposition \ref{prop:dense_reduction} and Equation~\eqref{eq:orbit_mod2}, there exist an element $Q \in \OP$ whose coordinates are of the form $\frac{1}{N}(u_0,v_0)$ and where 
\begin{itemize}
\item $(\overline{u_0}, \overline{v_0}) \notin \overline{p}^{-1}( \overline{H_n} \cdot [\overline{1}:\overline{0}]) \cup \{(0,0)\}$
\item $Q$ belong to the rectangle $R_{\frac{n-1}{2}}$ of $S_n$.
\end{itemize}
(We recall that $\overline{p}: \overline{\mathcal{O}} \times \overline{\mathcal{O}} - \{(\overline{0},\overline{0}) \} \to \mathbb{P}^1(\overline{\mathcal{O}})$ is the projective reduction.) In particular, the separatrix from $(0,0)$ to $Q$ has a direction which reduces in $\mathbb{P}^1(\overline{O})$ to an element that is not in the orbit of $[\overline{1}:\overline{0}]$, and hence this separatrix does not have a periodic direction and does not extend to a saddle connection: $Q$, and thus $P$, is not a connection point.\newline

We are left to prove Proposition \ref{prop:dense_reduction}. The main technical step is given by the following 
\begin{Prop}\label{prop:density_one_reduction}
Let $(u,v) \in \overline{\mathcal{O}} \times \overline{\mathcal{O}}$. Assume $\OP_{(u,v)}$ is non-empty. Then it is dense in $S_n$.
\end{Prop}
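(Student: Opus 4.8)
The plan is to exploit the affine diffeomorphisms $\varphi_T$ (twist in every horizontal cylinder, derivative $T$), its conjugate $\varphi_S \varphi_T \varphi_S^{-1}$ (twist in every vertical cylinder), and the orientation-reversing $\varphi_R$, all of which preserve the lattice $\frac{1}{N}(\ZZ[\lambda]\times\ZZ[\lambda])$ and act trivially on the reduction modulo two of the coordinates (since $T \equiv R \equiv \mathrm{Id} \pmod 2$, and the translation corrections used to fold back into the standard model lie in $\ZZ[\lambda]\times\ZZ[\lambda]$, hence are $\equiv 0$). Therefore, starting from any fixed $Q_0 = \frac{1}{N}(x_0,y_0) \in \OP_{(u,v)}$, every point obtained from $Q_0$ by applying words in these three diffeomorphisms again lies in $\OP_{(u,v)}$ with the \emph{same} reduction $(u,v)$. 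The whole point is then that these maps alone suffice to get a dense subset of $S_n$, so that $\OP_{(u,v)}$ is dense.

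First I would reduce to a local statement: it is enough to show that the sub-semigroup $G$ generated by the horizontal twist $\varphi_T$, the vertical twist $\varphi_V := \varphi_S\varphi_T\varphi_S^{-1}$ (and $\varphi_S$ itself) has a dense orbit of \emph{every} non-periodic point, and then observe that $Q_0$ is non-periodic — indeed, if $Q_0$ were periodic, by Proposition \ref{prop:irrational_height} it would have rational height in every cylinder, hence coordinates in $\QQ\times\QQ$; but $Q_0 \in \frac{1}{N}(\ZZ[\lambda]\times\ZZ[\lambda])$ with reduction $(u,v)$, and having rational coordinates in $\frac{1}{N}\ZZ[\lambda]$ forces the $\lambda$-coefficients of $x_0,y_0$ to vanish, which one checks is incompatible with $Q_0$ lying in the orbit of the non-periodic $P_0$ (the orbit of a non-periodic point contains no periodic point). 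Granting non-periodicity, the orbit of $Q_0$ under the full affine group is dense (this is the standard Veech-surface fact, already invoked in the proof of Proposition \ref{prop:irrational_height}); I would re-derive density under the smaller semigroup $G$ directly: use a twist in a fixed horizontal cylinder $C_i$ to make the orbit equidistribute along a horizontal leaf of $C_i$ (irrationality of the return coordinate is guaranteed precisely because $Q_0$ is non-periodic, so it has irrational height in some cylinder after possibly one application of $\varphi_S$), then alternate with the vertical twist to spread in the transverse direction, and use connectedness of $S_n$ together with the cylinder decompositions in both directions to conclude the closure of the $G$-orbit is open, hence all of $S_n$.

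The key steps, in order: (1) verify that $T, R \equiv \mathrm{Id} \pmod 2$ and that folding back into the standard planar model only adds vectors of $\ZZ[\lambda]\times\ZZ[\lambda]$, so that $\varphi_T, \varphi_V, \varphi_S, \varphi_R$ all preserve each set $\OP_{(u,v)}$; (2) show $Q_0 \in \OP_{(u,v)}$ is non-periodic; (3) show the $G$-orbit of a non-periodic point is dense in $S_n$, using twists in the horizontal and vertical cylinder decompositions and the irrational-height criterion of Proposition \ref{prop:irrational_height}; (4) combine to get $\overline{\OP_{(u,v)}} = S_n$. The main obstacle I expect is step (3): making precise that a single parabolic twist in a family of parallel cylinders equidistributes the orbit of a point with irrational height along the core leaves, and then bootstrapping from "dense along a leaf" to "dense in $S_n$" by interleaving the horizontal and vertical twists — this requires a little care about which cylinder has irrational height and about how the two cylinder decompositions overlap, but no genuinely new idea beyond the classical density argument for Veech surfaces.
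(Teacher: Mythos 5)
There is a genuine gap, and it occurs at the very first step of your plan. You assert that $T \equiv R \equiv \mathrm{Id} \pmod 2$ and that the fold-back translations, being in $\ZZ[\lambda]\times\ZZ[\lambda]$, reduce to zero, so that $\varphi_T$, $\varphi_V$, $\varphi_S$, $\varphi_R$ each preserve every set $\OP_{(u,v)}$. This is false on all counts. First, $\overline{T}=\bigl(\begin{smallmatrix}\overline{1}&\overline{\lambda}\\ \overline{0}&\overline{1}\end{smallmatrix}\bigr)$ is not the identity in $PSL_2(\overline{\mathcal{O}})$, since $\lambda\notin 2\ZZ[\lambda]$ ($\lambda$ is part of a $\ZZ$-basis of $\ZZ[\lambda]$); likewise $\overline{S}$ and $\overline{R}$ are the swap matrix, so $\varphi_R$ sends $\OP_{(u,v)}$ to $\OP_{(v,u)}$, not to itself. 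Second, the reduction being taken is that of $N$ times the coordinates with $N$ \emph{odd}, so a fold-back translation by $t\in\ZZ[\lambda]\times\ZZ[\lambda]$ changes the reduction by $\overline{Nt}=\overline{t}$, which is generally nonzero. Concretely, $\varphi_T$ sends $\frac{1}{N}(x,y)$ to $\frac{1}{N}(x+\lambda y,y)$ up to translation, altering the first reduced coordinate by $\overline{\lambda y}$; even $\varphi_T^2$, whose linear part does act trivially mod $2$, changes the reduction by $\overline{k\omega_i}$ where $k$ is the (possibly odd) number of wrap-arounds. That these maps do \emph{not} preserve the reduction is not a technicality but the engine of the whole argument: the very next statement (Proposition \ref{prop:dense_reduction}) produces all $(u,v)$ precisely by using $\varphi_S$, $\varphi_R$ and odd fold-backs of $\varphi_T^2$ to move between different reduction classes. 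If your claim (1) held, only one class could ever appear and Theorem \ref{theo:non_connexion_impair} could not be proved this way.

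What is actually needed, and what the paper does, is a twisting scheme that moves a point densely along a leaf \emph{while} keeping the total displacement, multiplied by $N$, inside $2\ZZ[\lambda]\times 2\ZZ[\lambda]$. This is achieved by working in a cylinder $C$ where $P$ has irrational height $\alpha=h_C(P)/h_C$, noting that $N\alpha(\omega_C^x,\omega_C^y)\in\ZZ[\lambda]\times\ZZ[\lambda]$, and using only \emph{even} powers $T_C^{2k}$ combined with \emph{even} fold-back counts $2l$, so the displacement is $2(k\alpha+l)(\omega_C^x,\omega_C^y)$; density of $2\alpha\ZZ+2\ZZ$ in $\RR$ then gives density along the leaf within the fixed class $\OP_{(u,v)}$, and a two-further-step bootstrap through transverse cylinders (where one must additionally rule out small-denominator rational heights) spreads this to all of $S_n$. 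Your outline of step (3) — alternate horizontal and vertical twists, use irrational height, conclude by connectedness — is the right geometric skeleton, but without the parity bookkeeping above it only proves density of the full orbit $\OP$, which is not the content of the proposition.
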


\begin{proof}
Let $P \in \OP_{(u,v)}$, and assume that $P$ belongs to the rectangle $R_j$. Using twists along cylinders, we will show that $\OP_{(u,v)}$ is dense in $R_j$ as well as the adjacent rectangles $R_{j-1}$ and $R_{j+1}$.\newline

\paragraph{\textbf{First step: Density on a line.}} Since $P$ is not a periodic point, there exists a cylinder $C$ such that $P$ has irrational height in $C$ (Proposition \ref{prop:irrational_height}). We let $(\omega_C^x,\omega_C^y)$ be the holonomy vector of a core curve in the cylinder, $h_C$ be the height of the cylinder $C$, and $h_C(P)$ be the distance from $P$ to the boundary of the cylinder $C$. This cylinder $C$ is the image of one of the horizontal cylinders $C_i$ by a matrix $M \in H_n$. In particular we have $(\omega_C^x, \omega_C^y) = M\cdot(\omega_i,0) \in \ZZ[\lambda]\times \ZZ[\lambda]$. Further, the point $P$ is the image of a point $P' \in C_i$ by the affine diffeomorphism $\varphi_M$ associated to $M$. The coordinates of $P'$ lie in $\frac{1}{N}(\ZZ[\lambda] \times \ZZ[\lambda])$ and because the action of $M$ on $\RR^2$ is a linear transformation, we have
\[  \frac{h_{C_i}(P')}{h_i} =  \frac{h_{C}(P)}{h_C} \]
But we also have,
\[ \frac{h_{C_i}(P')}{h_i}\omega_i = h_{C_i}(P') \lambda \in \frac{1}{N}\ZZ[\lambda]. \]
Because $h_{C_i}(P') \in \frac{1}{N}\ZZ[\lambda]$, as it is the $y$-coordinate of $P'$ (belonging to $\frac{1}{N}\ZZ[\lambda]$) minus the $y$-coordinate of the bottom-left vertex of the rectangle $R_{j} \cup R_{j+1}$ corresponding to $C_i$ (belonging to $\ZZ[\lambda]$). In particular, we conclude
\begin{equation}\label{eq:1/NZ}
\frac{h_C(P)}{h_C}(\omega_C^x,\omega_C^y) = M \cdot (\frac{h_{C_i}(P')}{h_i}\omega_i,0)\in \frac{1}{N} (\ZZ[\lambda] \times \ZZ[\lambda])
\end{equation}

Finally, the surface is decomposed into cylinders in the direction of $C$, and all cylinders have the same moduli: there is a affine diffeomorphism $T_C$ of $S_n$ given by a simple twist in every cylinder in the cylinder decomposition in the direction of $C$.\newline

To obtain the coordinates of the image of an element of $C$ by the twist $T_C$, it is convenient to first express the point in coordinates proper to $C$, and then use translations to get back to the fundamental domain of coordinates we choose for $S_n$. For this, let us choose coordinates proper to $C$ such that, in a neighborhood of $P$ inside the fundamental domain for $S_n$, these coordinates coincide with the coordinates in $S_n$. Concretely, we unfold the cylinder $C$ starting through $P$ and $R_i$. See Figure \ref{fig:coordinate_systems}.

\begin{figure}
\center
\definecolor{ccqqqq}{rgb}{0.8,0,0}
\definecolor{wwwwww}{rgb}{0.4,0.4,0.4}
\begin{tikzpicture}[line cap=round,line join=round,>=triangle 45,x=3cm,y=3cm]
\clip(-0.9608347540347759,-0.33515871824265736) rectangle (4.139620516255229,3.0457808800589166);
\fill[line width=1pt,color=wwwwww,fill=wwwwww,fill opacity=0.05] (0,1.9) -- (0,1) -- (0,0) -- (2.2005045765705487,0) -- (2.2,1) -- (1,1) -- (1,1.9) -- cycle;
\fill[line width=1pt,color=ccqqqq,fill=ccqqqq,fill opacity=0.05] (-0.18215251352418915,0.4542276147394042) -- (2.6298067509583025,2.686319573269306) -- (2.32,2.26) -- (-0.5,0) -- cycle;
\draw [line width=1pt] (0,0)-- (1,0);
\draw [line width=1pt] (0,0)-- (0,1);
\draw [line width=1pt] (1,1)-- (1,1.9);
\draw [line width=1pt] (1,1.9)-- (0,1.9);
\draw [line width=1pt] (1,1)-- (2.2,1);
\draw [line width=1pt] (2.2,1)-- (2.2005045765705487,0);
\draw [line width=1pt,dash pattern=on 3pt off 3pt] (0,1)-- (-0.2,1);
\draw [line width=1pt,dash pattern=on 3pt off 3pt] (1,0)-- (1,-0.2);
\draw [line width=1pt,dash pattern=on 3pt off 3pt] (2.2005045765705487,0)-- (2.4,0);
\draw [line width=1pt,dash pattern=on 3pt off 3pt] (0,1.9)-- (0,2.1);
\draw [line width=1pt] (-0.18215251352418915,0.4542276147394042)-- (-0.5,0);
\draw [line width=1pt] (2.6298067509583025,2.686319573269306)-- (2.32,2.26);
\draw [line width=2.1pt,color=ccqqqq] (0,0.4967025445497522)-- (1,1.288159933965406);
\draw (0.36365704736171733,0.7743936241552819) node[anchor=north west] {$P$};
\draw (0.3,1.5) node[anchor=north west] {$R_{j-1}$};
\draw (1.3570258984090873,0.5943092125618991) node[anchor=north west] {$R_{j+1}$};
\draw (0.49726806241487237,0.48974407034638645) node[anchor=north west] {$R_j$};
\draw [line width=1pt,dash pattern=on 3pt off 3pt, to-to] (-0.5588812942423702,-0.04267958733457202)-- (-0.6587576765232951,0.09472019975445);
\draw (-0.85,0.07) node[anchor=north west] {$h_C$};
\draw (2.4,2.4) node[anchor=north west] {$(\omega_C^x,\omega_C^y)$};
\draw [color=ccqqqq](0.05,1.15) node[anchor=north west] {$L_C(P)$};
\draw [line width=1pt] (-0.18215251352418915,0.4542276147394042)-- (2.6298067509583025,2.686319573269306);
\draw [line width=1pt,-to] (-0.5,0)-- (2.32,2.26);
\begin{scriptsize}
\draw [color=black] (0,0)-- ++(-2.5pt,-2.5pt) -- ++(5pt,5pt) ++(-5pt,0) -- ++(5pt,-5pt);
\draw [color=black] (1,0)-- ++(-2.5pt,-2.5pt) -- ++(5pt,5pt) ++(-5pt,0) -- ++(5pt,-5pt);
\draw [color=black] (0,1)-- ++(-2.5pt,-2.5pt) -- ++(5pt,5pt) ++(-5pt,0) -- ++(5pt,-5pt);
\draw [color=black] (1,1)-- ++(-2.5pt,-2.5pt) -- ++(5pt,5pt) ++(-5pt,0) -- ++(5pt,-5pt);
\draw [color=black] (1,1.9)-- ++(-2.5pt,-2.5pt) -- ++(5pt,5pt) ++(-5pt,0) -- ++(5pt,-5pt);
\draw [color=black] (0,1.9)-- ++(-2.5pt,-2.5pt) -- ++(5pt,5pt) ++(-5pt,0) -- ++(5pt,-5pt);
\draw [color=black] (2.2,1)-- ++(-2.5pt,-2.5pt) -- ++(5pt,5pt) ++(-5pt,0) -- ++(5pt,-5pt);
\draw [color=black] (2.2005045765705487,0)-- ++(-2.5pt,-2.5pt) -- ++(5pt,5pt) ++(-5pt,0) -- ++(5pt,-5pt);
\draw [fill=black] (0.3463287127275988,0.7688106664456019) circle (2.5pt);
\draw [color=black] (2.6298067509583025,2.686319573269306)-- ++(-2.5pt,-2.5pt) -- ++(5pt,5pt) ++(-5pt,0) -- ++(5pt,-5pt);
\draw [color=black] (2.32,2.26)-- ++(-2.5pt,-2.5pt) -- ++(5pt,5pt) ++(-5pt,0) -- ++(5pt,-5pt);
\draw [color=black] (-0.5,0)-- ++(-2.5pt,-2.5pt) -- ++(5pt,5pt) ++(-5pt,0) -- ++(5pt,-5pt);
\draw [color=black] (-0.18215251352418915,0.4542276147394042)-- ++(-2.5pt,-2.5pt) -- ++(5pt,5pt) ++(-5pt,0) -- ++(5pt,-5pt);
\end{scriptsize}
\end{tikzpicture}
\caption{The point $P$ and a cylinder $C$ in which $P$ has irrational height. We consider alternative coordinates for elements in $C$ in which the expression of the twist along $C$ is simple. These coordinates coincide with the chosen coordinates for $S_n$ for points lying at the intersection of the cylinder $C$ with $R_j \cup R_{j-1} \cup R_{j+1}$. The line $L_C(P)$ of points having the same height in $C$ as $P$ and whose coordinates coincide with the coordinates for $S_n$ is represented in bold red.}
\label{fig:coordinate_systems}
\end{figure}
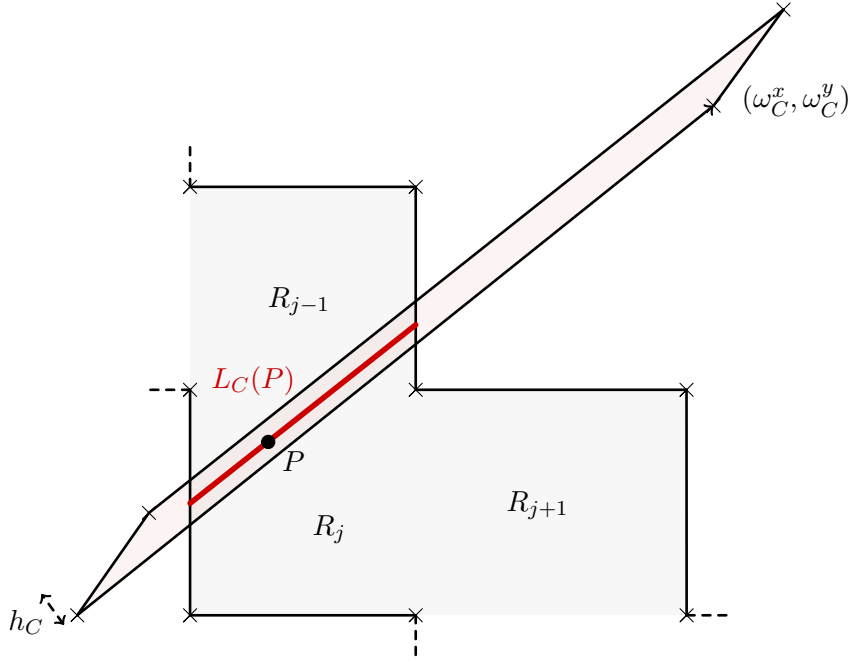

In these coordinates, the image of $P$ under $n$ twists in the cylinder $C$ is given by
\[ T_C^n(P) = P + (n \frac{h_C(P)}{h_C} - k) (\omega_C^x,\omega_C^y) \]
where $k$ is the unique integer such that $P + (n \frac{h_C(P)}{h_C} - k) (\omega_C^x,\omega_C^y)$ belongs to the fundamental domain chosen for $C$.\newline

Now, let $L_C(P)$ be the line of points at the same height as $P$ in $C$, whose coordinates in $C$ coincide with the coordinates in $S_n$. In other words, we let $I_C(P)$ be the interval containing zero of the set:
\[ \{ \eta \in \RR | P + \eta (\omega_C^x, \omega_C^y) \text{ are admissible coordinates in } S_n \} \]
and
\[ L_C(P) := \{ P + \eta (\omega_C^x, \omega_C^y) | \eta \in I_C(P) \}. \]
Let us choose now any $\eta$ in the interior of $I_C(P)$. Using the fact that $\frac{h_C(P)}{h_C} \notin \QQ$ (so that $2 \frac{h_C(P)}{h_C}$ and $2$ are rationally independent, and 
$\overline{2 \frac{h_C(P)}{h_C} \ZZ + 2\ZZ} = \RR$), for any given $\varepsilon > 0$ we can find $k,l \in \ZZ$ such that
 \[ 
|(2\frac{h_C(P)}{h}k + 2l) - \eta | \leq \varepsilon 
\]
Now, if $\varepsilon$ is chosen sufficiently small we have $(\eta-\varepsilon, \eta+\varepsilon) \subset I_C(P)$. In particular
$P + (2k \frac{h_C(P)}{h_C} + 2l) (\omega_C^x,\omega_C^y)$ have admissible coordinates in $S_n$ but also in coordinates proper to $C$, and hence the coordinates, in $S_n$, of $T_C^{2n}(P)$ are
\[ \frac{1}{N}(x_P,y_P) + (2k \frac{h_C(P)}{h_C} + 2l) (\omega_C^x,\omega_C^y) \]
where we denoted by $\frac{1}{N}(x_P,y_P) \in \frac{1}{N}(\ZZ[\lambda] \times \ZZ[\lambda])$ the coordinates of $P$ in $S_n$. In particular
\begin{itemize}
\item $T_C^{2n}(P)$ is $\varepsilon-$close to $P + \eta(\omega_C^x,\omega_C^y)$.
\item Since $\omega_C^x, \omega_C^y \in \ZZ[\lambda]$ and because of Equation~\eqref{eq:1/NZ}, we have 
\[ N \times ((2k \frac{h_C(P)}{h_C}(\omega_C^x,\omega_C^y) - 2l (\omega_C^x,\omega_C^y)) \in 2\ZZ[\lambda] \times 2\ZZ[\lambda] \]
and hence $T_C^{2n}(P) \in \OP_{(u,v)}$.
\end{itemize}

Since the above holds for any $\eta \in \mathring{I_C(P)}$ and any $\varepsilon > 0$ sufficiently small, we obtain that $\OP_{(u,v)}$ is dense in $L_C(P)$.\newline

\paragraph{\textbf{Second step : Density on a horizontal (or vertical) strip.}}
Now, we consider a cylinder $C'$ transverse to $C$, and we will assume for simplicity that it is either horizontal (if $C$ is not already horizontal) or vertical (if $C$ is horizontal). Because of the transversality assumption, the set of heights in $C'$ of elements of the line $L_C(P)$,
\[ H_{C'}(P) := \{ h_{C'}(Q) | Q \in L_C(P) \} \]
is an interval with non-empty interior. Let us show that $\OP_{(u,v)}$ is dense on the strip of elements of $C'$ whose height belongs to $H_{C'}(P)$.

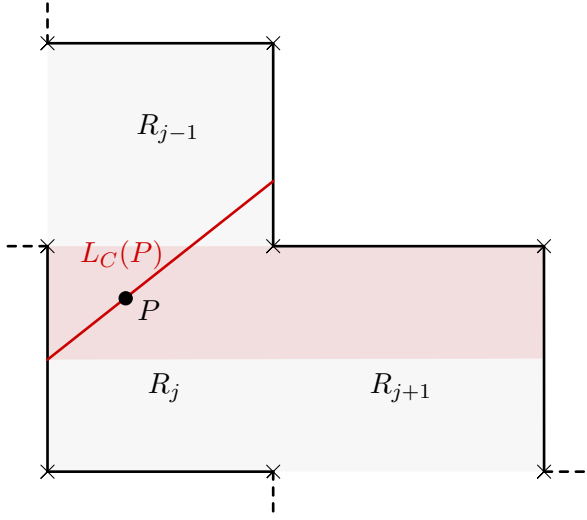
\begin{figure}
\center
\definecolor{ccqqqq}{rgb}{0.8,0,0}
\definecolor{wwwwww}{rgb}{0.4,0.4,0.4}
\begin{tikzpicture}[line cap=round,line join=round,>=triangle 45,x=3cm,y=3cm]
\clip(-0.8719547915625433,-0.5164431100333844) rectangle (3.3312372027224533,2.2697274374311585);
\fill[line width=1pt,color=wwwwww,fill=wwwwww,fill opacity=0.05] (0,1.9) -- (0,1) -- (0,0) -- (2.2005045765705487,0) -- (2.2,1) -- (1,1) -- (1,1.9) -- cycle;
\fill[line width=1pt,color=ccqqqq,fill=ccqqqq,fill opacity=0.1] (0,0.4967025445497522) -- (2.2002515976543586,0.5013687336194839) -- (2.2,1) -- (0,1) -- cycle;
\draw [line width=1pt] (0,0)-- (1,0);
\draw [line width=1pt] (0,0)-- (0,1);
\draw [line width=1pt] (1,1)-- (1,1.9);
\draw [line width=1pt] (1,1.9)-- (0,1.9);
\draw [line width=1pt] (1,1)-- (2.2,1);
\draw [line width=1pt] (2.2,1)-- (2.2005045765705487,0);
\draw [line width=1pt,dash pattern=on 3pt off 3pt] (0,1)-- (-0.2,1);
\draw [line width=1pt,dash pattern=on 3pt off 3pt] (1,0)-- (1,-0.2);
\draw [line width=1pt,dash pattern=on 3pt off 3pt] (2.2005045765705487,0)-- (2.4,0);
\draw [line width=1pt,dash pattern=on 3pt off 3pt] (0,1.9)-- (0,2.1);
\draw [line width=1pt,color=ccqqqq] (0,0.4967025445497522)-- (1,1.288159933965406);
\draw (0.3535772705524445,0.8048336444343369) node[anchor=north west] {$P$};
\draw (0.35,1.6282379986678444) node[anchor=north west] {$R_{j-1}$};
\draw (1.3828327133443286,0.48) node[anchor=north west] {$R_{j+1}$};
\draw (0.4,0.48) node[anchor=north west] {$R_j$};
\draw [color=ccqqqq](0.1,1.07) node[anchor=north west] {$L_C(P)$};
\begin{scriptsize}
\draw [color=black] (0,0)-- ++(-2.5pt,-2.5pt) -- ++(5pt,5pt) ++(-5pt,0) -- ++(5pt,-5pt);
\draw [color=black] (1,0)-- ++(-2.5pt,-2.5pt) -- ++(5pt,5pt) ++(-5pt,0) -- ++(5pt,-5pt);
\draw [color=black] (0,1)-- ++(-2.5pt,-2.5pt) -- ++(5pt,5pt) ++(-5pt,0) -- ++(5pt,-5pt);
\draw [color=black] (1,1)-- ++(-2.5pt,-2.5pt) -- ++(5pt,5pt) ++(-5pt,0) -- ++(5pt,-5pt);
\draw [color=black] (1,1.9)-- ++(-2.5pt,-2.5pt) -- ++(5pt,5pt) ++(-5pt,0) -- ++(5pt,-5pt);
\draw [color=black] (0,1.9)-- ++(-2.5pt,-2.5pt) -- ++(5pt,5pt) ++(-5pt,0) -- ++(5pt,-5pt);
\draw [color=black] (2.2,1)-- ++(-2.5pt,-2.5pt) -- ++(5pt,5pt) ++(-5pt,0) -- ++(5pt,-5pt);
\draw [color=black] (2.2005045765705487,0)-- ++(-2.5pt,-2.5pt) -- ++(5pt,5pt) ++(-5pt,0) -- ++(5pt,-5pt);
\draw [fill=black] (0.3463287127275988,0.7688106664456019) circle (2.5pt);
\end{scriptsize}
\end{tikzpicture}
\caption{In the first step, we show that $\OP_{(u,v)}$ is dense on $L_C(P)$ using the twist in a cylinder $C$ in which $P$ has irrational height. In the second step, we use a similar argument to show density of $\OP_{(u,v)}$ in the strip of elements whose height in $C'$ is the same as an element of $L_C(P)$.}
\label{fig:strip_extension}
\end{figure}

Let $h_0 \in H_{C'}(P)$. By definition of $H_{C'}(P)$ and density of $\OP_{(u,v)}$ on $L_C(P)$, for every $\varepsilon > 0$ and $M \in \NN^*$ we can find an element $Q \in \OP_{(u,v)}$ such that
\begin{align}
|h_{C'}(Q) - h_0| < \varepsilon \\
\frac{h_{C'}(Q)}{h_{C'}} \notin \{ \frac{k}{m} | 0 < k < m \leq M \} 
\end{align}
(All points of $\OP_{(u,v)}$ could have, \textit{a priori}, rational height in $C'$, but in this case the denominators have to be unbounded by density.) Further, as in Equation~\eqref{eq:1/NZ}, since $Q$ has coordinates in $\frac{1}{N}\ZZ[\lambda]\times\ZZ[\lambda]$ we have
\begin{equation}\label{eq:1/NZ2}
\frac{h_{C'}(Q)}{h_{C'}} (\omega_{C'}^x,\omega_{C'}^y) \in \frac{1}{N}(\ZZ[\lambda]\times\ZZ[\lambda])
\end{equation}
(In fact, as we assumed that $C'$ is either horizontal or vertical, we have $(\omega_{C'}^x, \omega_{C'}^y) = (\omega_i,0)$ or $(0,\omega_i)$ for some $i$).

With this choice of $Q$, the set $A = 2 \frac{h_{C'}(Q)}{h_{C'}} \ZZ + 2 \ZZ$ is $\frac{2}{M}-$dense. As in the first step, for any $\eta$ such that $Q+ \eta(\omega_{C'}^x,\omega_{C'}^y)$ has admissible coordinates for $S_n$, and is at distance at least $\frac{2}
{M}$ away from the boundary, we can find $k,l \in \ZZ$ such that 

\[ | 2 \frac{h_{C'}(Q)}{h_{C'}} k + 2l - \eta | \leq \frac{2}{M} \]
and hence
\[ T_{C'}^{2k}(Q) = Q + 2(\frac{h_{C'}(Q)}{h_{C'}} k + l) (\omega_{C'}^x,\omega_{C'}^y) \]
So that
\begin{itemize}
\item $T_{C'}^{2k}(Q)$ is $\frac{2}{M}-$close to $Q + \eta (\omega_{C'}^x,\omega_{C'}^y)$, and hence it is $\frac{2}{M} + \varepsilon$-close to the point of height $h_0$ at the vertical (resp. horizontal) of $Q + \eta (\omega_{C'}^x,\omega_{C'}^y)$.
\item Using that $\omega_{C'}^x,\omega_{C'}^y \in \ZZ[\lambda]$ and Equation~\eqref{eq:1/NZ2}, we obtain
\[ N \times (2(\frac{h_{C'}(Q)}{h_{C'}} k + l) (\omega_{C'}^x,\omega_{C'}^y)) \in 2\ZZ[\lambda] \times 2\ZZ[\lambda] \]
and hence $T_{C'}^{2k}(Q) \in \OP_{(u,v)}$.
\end{itemize}
Since this holds for any $\varepsilon > 0$ sufficiently small, $M \in \NN$ sufficiently big, and any admissible $\eta$, we obtain that $\OP_{(u,v)}$ is dense on the strip of elements of $C'$ whose height belongs to $H_{C'}(P)$.\newline

\paragraph{\textbf{Third step: Conclusion.}} Now, with the same argument as in the second step, we can choose any line inside the horizontal (resp. vertical) strip in which we know that $\OP_{(u,v)}$ is dense, consider the transverse vertical (resp. horizontal) cylinders, and apply the same argument to obtain that $\OP_{(u,v)}$ is dense on the rectangles $R_{j-1}$, $R_j$ and $R_{j+1}$. By induction, we then obtain density in every rectangle, and hence on all of $S_n$.
\end{proof}

Now we use Proposition \ref{prop:density_one_reduction} to prove Proposition \ref{prop:dense_reduction}.

\begin{proof}[Proof of Proposition \ref{prop:dense_reduction}]
Let $\pi_2(\OP)$ be the set of $(u,v) \in \overline{\mathcal{O}} \times \overline{\mathcal{O}}$ for which $\OP_{(u,v)} \neq \varnothing$. We have:
\begin{enumerate}
\item $\pi_2(\mathcal{P})$ is stable under reversal $(u,v) \mapsto (v,u)$, as we can apply $\varphi_R$ to an element of $\OP_{(u,v)}$ to obtain an element of $\OP_{(v,u)}$.
\item $\pi_2(\mathcal{P})$ is stable by adding $(\overline{0},\overline{1})$ (and hence $(\overline{1},\overline{0})$): given $(u,v) \in \pi_2(\mathcal{P})$, we know from Proposition \ref{prop:density_one_reduction} that there exists a representative element $Q = \frac{1}{N}(x,y) \in \OP_{(u,v)}$ in the rectangle $R_{\frac{n-1}{2}}$. Using the fact that $R_{\frac{n-1}{2}}$ is a unit square and that the origin has been chosen at the bottom left of $R_{\frac{n-1}{2}}$, we obtain that the image of $Q$ under the affine diffeomorphism $\varphi_S$ of derivative $S$ has coordinates $\frac{1}{N}(N-y,x)$ and if we apply the reversal $\varphi_R$ we obtain the element $\frac{1}{N}(x,N-y)$, and we have $(\overline{x},\overline{N-y}) = (\overline{x}, \overline{y}+\overline{1}) = (u,v+\overline{1})$ because $N$ is odd, as required.
\item For every $1 \leq i \leq \frac{n-1}{2}$, $\pi_2(\mathcal{P})$ is stable by the addition of $(\overline{\omega_i},0)$ (we recall that $\omega_i$ is the width of the $i^{th}$ horizontal cylinder). As before, from Proposition \ref{prop:density_one_reduction} we can choose an element of $\OP_{(u,v)}$ which lies inside the diagonal strip of the $i^{th}$ horizontal cylinder, that is the set of points of $C_i$ such that $\varphi_T^2(x,y) = (x + 2y \lambda - \omega_i, y)$, see Figure \ref{fig:diagonal_strip}. In particular, if $P \in \OP_{(u,v)}$ belongs to this diagonal strip, then the image of $P$ under the double horizontal twist $\varphi_{T}^2(P)$ lies in $\OP_{(u + \overline{\omega_i},v)}$ (here we again use that $N$ is odd).
\end{enumerate}

\begin{figure}[h]
\center
\definecolor{zzttqq}{rgb}{0.6,0.2,0}
\begin{tikzpicture}[line cap=round,line join=round,>=triangle 45,x=1cm,y=1cm, scale = 1.5]
\clip(-0.7231742469242228,-0.5262708539357027) rectangle (3.5343853230546265,2.2959383370981246);
\fill[line width=1pt,color=zzttqq,fill=zzttqq,fill opacity=0.10000000149011612] (0,2) -- (0,1) -- (3,0) -- (3,1) -- cycle;
\draw [line width=1pt] (0,0)-- (3,0);
\draw [line width=1pt] (3,0)-- (3,2);
\draw [line width=1pt] (3,2)-- (0,2);
\draw [line width=1pt] (0,2)-- (0,0);
\draw [line width=1pt] (3,0)-- (0,1);
\draw [line width=1pt] (3,1)-- (0,2);
\draw [line width=1pt,dash pattern=on 3pt off 3pt,to-to] (0,-0.2)-- (3,-0.2);
\draw [line width=1pt,dash pattern=on 3pt off 3pt,to-to] (-0.2,0)-- (-0.2,2);
\draw (-0.7,1.3115595299162224) node[anchor=north west] {$h_i$};
\draw (1.3,-0.17) node[anchor=north west] {$\omega_i$};
\begin{scriptsize}
\draw [color=black] (0,0)-- ++(-2pt,-2pt) -- ++(4pt,4pt) ++(-4pt,0) -- ++(4pt,-4pt);
\draw [color=black] (3,0)-- ++(-2pt,-2pt) -- ++(4pt,4pt) ++(-4pt,0) -- ++(4pt,-4pt);
\draw [color=black] (3,2)-- ++(-2pt,-2pt) -- ++(4pt,4pt) ++(-4pt,0) -- ++(4pt,-4pt);
\draw [color=black] (0,2)-- ++(-2pt,-2pt) -- ++(4pt,4pt) ++(-4pt,0) -- ++(4pt,-4pt);
\end{scriptsize}
\end{tikzpicture}
\caption{The diagonal strip of the $i^{th}$ horizontal cylinder $C_i$ is the set of points of $C_i$ such that $\varphi_T^2(x,y) = (x + \frac{y \omega_i}{h_i} - \omega_i, y)$.}
\label{fig:diagonal_strip}
\end{figure}
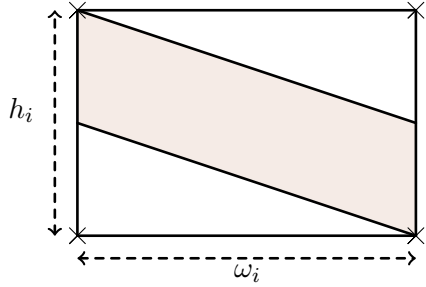

Finally, since for $1 \leq i < \frac{n-1}{2}$, $\omega_i$ can be expressed as a monic polynomial in $\lambda$ of degree $i$ (see Lemma \ref{lem:widths_cylinders}), this holds in particular for $1 \leq i < d$, where $d = \frac{1}{2} \varphi(2n) \leq \frac{n-1}{2}$ is the degree of the extension $[\QQ[\lambda]:\QQ]$. In particular, the set $\{1 \} \cup \{\omega_i, 1 \leq i \leq d-1 \}$ is a generating family of the $\ZZ$-module $\ZZ[\lambda]$ and hence, we conclude by (1), (2) and (3) that for every $(u,v) \in \mathcal{\overline{O}} \times \mathcal{\overline{O}}$ the set $\OP_{(u,v)}$ is non-empty, and hence dense in $S_n$ by Proposition \ref{prop:density_one_reduction}.
\end{proof}

This concludes the proof of Proposition \ref{prop:dense_reduction}, and hence of Theorem \ref{theo:non_connexion_impair}.

\section{Proof of Theorem \ref{theo:prime}}\label{sec:prime}
We now turn to the proof of Theorem \ref{theo:prime}. The proof relies on exhibiting an explicit element of $\mathbb{P}^1(\overline{\mathcal{O}})$ which does not belong to the orbit of $[\overline{1}:\overline{0}]$ under the action of the reduced Hecke group. Namely, 

\begin{Prop}\label{prop:lambda2}
Let $p \geq 7$ be a prime, $\lambda = 2 \cos \frac{\pi}{p}$, and $s = \frac{x}{y} \in \QQ[\lambda]$ with $x, y \in \mathcal{O}$ coprime. Assume that $\Psi(s) = [\overline{x}:\overline{y}] = [\overline{\lambda^2 +1}:\overline{1}]$. Then $s$ does not belong to the orbit of $\infty \in \partial \HH$ under the action of the Hecke group $H_p$.
\end{Prop}

The proof Proposition \ref{prop:lambda2} uses arithmetics in $\QQ[\lambda]$: the fact that $p$ is prime allows for a convenient expression of the minimal polynomial of $2 \cos \frac{\pi}{p}$ as well as the orbit of $[\overline{1}:\overline{0}]$. We then show that the direction of the separatrix in the statement of Theorem \ref{theo:prime} reduces to (an element of the orbit of) $[\overline{\lambda^2+1},\overline{1}]$. This second step is independent of the fact that $n$ is a prime number. In fact, although Proposition \ref{prop:lambda2} does not hold for $n=9$ and $n=15$, it could be checked numerically that it holds for every odd $17 \leq n \leq 201$ (we did not perform the tests after $201$), and so Theorem \ref{theo:prime} is also true for these values of $n$, independantly of $n$ being prime or not.

\subsection{Preliminary : the minimal polynomial of $\lambda$.} We start by providing a lemma that will turn useful in the proof of Proposition \ref{prop:lambda2}.

\begin{Lem}\label{lem:polynome_cyclotomique}
If $n \geq 3$ is a prime number, then the minimal polynomial $P_{\mu}$ of $\lambda_n$ is a monic polynomial of degree $\frac{n-1}{2}$, the coefficient of degree $\frac{n-3}{2}$ is $-1$ and the coefficient of degree $\frac{n-5}{2}$ is $-\frac{n-3}{2}$.
\end{Lem}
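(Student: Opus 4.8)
If $n \geq 3$ is prime, the minimal polynomial $P_\mu$ of $\lambda_n = 2\cos(\pi/n)$ is monic of degree $\frac{n-1}{2}$, with the coefficient of $X^{(n-3)/2}$ equal to $-1$ and the coefficient of $X^{(n-5)/2}$ equal to $-\frac{n-3}{2}$.

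The plan is to realize the minimal polynomial of $\lambda_n = 2\cos(\pi/n)$ explicitly as a ``folded'' version of the palindromic polynomial $\sum_{j=0}^{n-1}(-X)^j$. First I would set $\zeta = e^{i\pi/n}$, so that $\lambda_n = \zeta + \zeta^{-1}$ and $\zeta^n = -1$, and put $d = \frac{n-1}{2}$. Consider $R(X) = \sum_{j=0}^{n-1}(-1)^j X^j$; since $n$ is odd this is monic of degree $n-1 = 2d$, and its coefficient sequence $\bigl((-1)^j\bigr)_{0\le j\le 2d}$ is palindromic because $2d$ is even. Hence $X^{-d}R(X)$ is a Laurent polynomial invariant under $X\mapsto X^{-1}$, and by the standard fact that such Laurent polynomials are polynomials in $T = X + X^{-1}$, there is a monic $Q\in\ZZ[T]$ of degree $d$ with $R(X) = X^d\,Q(X + X^{-1})$.

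Second, I would check that $Q$ is the minimal polynomial. Evaluating, $Q(\lambda_n) = Q(\zeta+\zeta^{-1}) = \zeta^{-d}R(\zeta) = \zeta^{-d}\sum_{j=0}^{n-1}(-\zeta)^j = \zeta^{-d}\cdot\frac{1+\zeta^n}{1+\zeta} = 0$, using the geometric sum (valid since $1+\zeta\neq 0$ for $n\ge 3$) and $\zeta^n = -1$. Since $Q$ is monic of degree $d = \frac{n-1}{2} = \frac12\varphi(2n) = [\QQ(\lambda_n):\QQ]$ and vanishes at $\lambda_n$, it equals $P_\mu$; in particular $P_\mu$ is monic of degree $\frac{n-1}{2}$, which is the first assertion. (If one prefers not to invoke the degree $\frac12\varphi(2n)$ from the introduction, one can instead observe that $R(X)$ is the cyclotomic polynomial $\Phi_{2n}(X)$ — for prime $n$ one has $\Phi_{2n}(X) = \Phi_n(-X) = \frac{X^n+1}{X+1}$ — and deduce irreducibility of $Q$ from that of $\Phi_{2n}$; either route works.)

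Third, I would extract the two top coefficients by matching $X^d\,Q(X+X^{-1})$ against $R(X) = X^{2d} - X^{2d-1} + X^{2d-2} - \cdots$. Writing $Q(T) = T^d + a_{d-1}T^{d-1} + a_{d-2}T^{d-2} + \cdots$ and expanding $(X+X^{-1})^k = \sum_i \binom{k}{i}X^{k-2i}$, the term $a_k X^d(X+X^{-1})^k$ contributes only to the powers $X^{d+k}, X^{d+k-2}, X^{d+k-4},\dots$ (with $a_d := 1$). Hence the coefficient of $X^{2d-1}$ in the left side comes solely from $k = d-1$ and equals $a_{d-1}$, forcing $a_{d-1} = -1$ (matching the $-1$ in $R$); and the coefficient of $X^{2d-2}$ gets contributions from $k = d$ (namely $\binom{d}{1} = d$) and from $k = d-2$ (namely $a_{d-2}$), while $k = d-1$ contributes only to odd powers, so $d + a_{d-2} = 1$, i.e. $a_{d-2} = 1 - d = -\frac{n-3}{2}$. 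Since $d-1 = \frac{n-3}{2}$ and $d-2 = \frac{n-5}{2}$, these $a_{d-1}$ and $a_{d-2}$ are precisely the claimed coefficients of degrees $\frac{n-3}{2}$ and $\frac{n-5}{2}$.

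The computations are all routine; the only points needing a little care are the passage from ``palindromic polynomial'' to ``polynomial in $X+X^{-1}$'' — a standard induction on $d$ using $X^d + X^{-d} = T\,(X^{d-1}+X^{-(d-1)}) - (X^{d-2}+X^{-(d-2)})$ — and the bookkeeping of which $k$ contributes to $X^{2d-1}$ and $X^{2d-2}$, in particular checking that the $k = d-1$ term produces no $X^{2d-2}$. I do not anticipate a genuine obstacle: the real content of the argument is simply choosing the right generating identity $R(X) = \sum_{j=0}^{n-1}(-X)^j$ to begin with, after which both coefficient values drop out of comparing the top three coefficients.
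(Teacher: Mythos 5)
Your proof is correct and follows essentially the same route as the paper: both arguments rest on the identity $t^{d}P_\mu(t+t^{-1})=\Phi_{2n}(t)$ with $\Phi_{2n}(X)=\frac{X^n+1}{X+1}=\sum_{k=0}^{n-1}(-1)^kX^k$ for prime $n$, followed by matching the coefficients of $X^{2d}$, $X^{2d-1}$ and $X^{2d-2}$ to read off $a_{d-1}=-1$ and $a_{d-2}=1-d=-\frac{n-3}{2}$. The only difference is that you re-derive the folding identity from the palindromic structure of $\sum_j(-X)^j$ and verify $Q(\lambda_n)=0$ directly, whereas the paper simply quotes the relation between $P_\mu$ and $\Phi_{2n}$; the coefficient bookkeeping is identical.
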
 
The other coefficients will not be needed here.
\begin{proof}
Let us first recall that since $\lambda_n = e^{2i\pi/2n} + e^{-2i\pi/2n}$, the extension $\QQ[\lambda_n]$ over $\QQ$ has degree $\frac{1}{2} \varphi (2n)$ and the minimal polynomial $P_{\mu}$ of $\lambda_n$ satisfies 
\[ t^{\frac{1}{2} \varphi (2n)} P_{\mu} (t + t^{-1}) = \Phi_{2n}(t)
 \]
where $\Phi_n$ denotes the $n^{th}$ cyclotomic polynomial. In particular, when $n$ is a prime number, we have $\varphi(2n) = n-1$ and $\Phi_{2n}(X) = \frac{X^n+1}{X+1} = \sum_{k=0}^{n-1} (-1)^k X^k$. As such, $P_{\mu}$ is a monic polynomial of degree $d :=\frac{n-1}{2}$. Further, if we write $P_{\mu}(X) = X^{\frac{n-1}{2}} + \alpha X^{\frac{n-3}{2}} + \beta X^{\frac{n-5}{2}} + \cdots$, the above relation gives
\[
 t^{n-1} + \alpha t^{n-2} + \left(\frac{n-1}{2} +\beta \right) t^{n-3} + \cdots = t^{n-1} - t^{n-2} + t^{n-3} + \cdots 
\]
so that $\alpha = -1$ and $\beta = 1- \frac{n-1}{2} = -\frac{n-3}{2}$.
\end{proof}

\subsection{Proof of Proposition \ref{prop:lambda2}}\label{sec:proof_orbit}
It suffices to show that for any prime $p \geq 7$, setting $\lambda = \lambda_p = 2 \cos \frac{\pi}{p}$, the element $[\overline{\lambda^2 + 1} : \overline{1}] \in \mathbb{P}^1(\overline{\mathcal{O}})$ is not in the orbit of $[\overline{1}:\overline{0}]$ under the action of $\overline{H}_p$. \newline

We recall (see Proposition \ref{prop:orbit_mod_2} and the comments below) that the orbit of $[\overline{1}:\overline{0}]$ is given by
\[ \{ [\overline{P_i(\lambda)} : \overline{P_{i + 1}(\lambda)}], 0 \leq i \leq \frac{p-1}{2} \} \cup \{ [\overline{P_i(\lambda)} : \overline{P_{i -1}(\lambda)}], 1 \leq i \leq \frac{p-1}{2} \},\]
where $P_0(X) = 0, P_1(X) = 1$ and $P_{i+1}(X) = XP_i(X) + P_{i-1}(X)$ for $i \geq 1$ (we also have $P_{\frac{p+1}{2}}(X) = P_{\frac{p-1}{2}}(X)$). In particular, it is easily shown by induction that
\begin{Lem}\label{lem:properties_Pi}
For $1 \leq i \leq \frac{p-1}{2}$,
\begin{itemize}
\item $P_i$ is monic and of degree $i-1$.
\item $P_i$ has non trivial coefficients of degree $k$ only if $k \equiv i-1 \mod 2$,
\item (for $i \geq 3$) the coefficient of degree $i-3$ of $P_i$ is $i-2$.
\end{itemize}
\end{Lem}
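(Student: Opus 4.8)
The plan is to prove all three assertions by induction on $i$, using the recurrence $P_{i+1}(X) = XP_i(X) + P_{i-1}(X)$ throughout. No individual step is hard; the work is essentially bookkeeping of degrees and of which term of the recurrence contributes to a given coefficient.

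First I would handle degree, monicity, and parity simultaneously. The base cases are $P_1(X) = 1$ and $P_2(X) = X$, which plainly satisfy all of them (degrees $0$ and $1$, monic, with non-zero coefficients only in degrees $\equiv 0$ and $\equiv 1 \bmod 2$ respectively). For the inductive step, assume the claims for $P_{i-1}$ and $P_i$ with $i \geq 2$. Since $\deg P_i = i-1$, the polynomial $XP_i$ is monic of degree $i$, whereas $\deg P_{i-1} = i-2 < i$; hence $P_{i+1} = XP_i + P_{i-1}$ is monic of degree $i = (i+1)-1$. For parity, the non-zero coefficients of $XP_i$ occur in degrees $\equiv (i-1)+1 = i \bmod 2$ and those of $P_{i-1}$ in degrees $\equiv (i-1)-1 = i \bmod 2$, so every non-zero coefficient of $P_{i+1}$ lies in a degree $\equiv i \equiv (i+1)-1 \bmod 2$, as required. (In particular $i-3 \equiv i-1 \bmod 2$, so the third assertion is at least about a degree where a non-zero coefficient is permitted.)

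Next I would treat the coefficient of degree $i-3$ by a separate induction starting at $i = 3$. The base case is $P_3(X) = XP_2(X) + P_1(X) = X^2 + 1$, whose coefficient in degree $i-3 = 0$ is $1 = i-2$. For the step, suppose the coefficient of degree $i-3$ in $P_i$ equals $i-2$ for some $i \geq 3$, and look at the coefficient of degree $(i+1)-3 = i-2$ in $P_{i+1} = XP_i + P_{i-1}$. The contribution from $XP_i$ is the coefficient of degree $i-3$ in $P_i$, namely $i-2$ by the induction hypothesis; the contribution from $P_{i-1}$ is its coefficient in degree $i-2$, which equals $1$ because $P_{i-1}$ is monic of degree $(i-1)-1 = i-2$ by the first part. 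Summing gives $(i-2) + 1 = i-1 = (i+1)-2$, closing the induction.

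I do not expect a genuine obstacle. The only points that need care are pinning down the base cases (in particular starting the third induction at $i=3$, where $i-3 = 0$ is the constant term) and making sure the two summands $XP_i$ and $P_{i-1}$ do not both feed the same low-degree coefficient in an unaccounted way — which is precisely why the degree bound $\deg P_{i-1} = i-2$ is established first and then invoked in the third step.
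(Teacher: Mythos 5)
Your proof is correct and follows exactly the route the paper intends: the paper simply asserts that the lemma "is easily shown by induction" on the recurrence $P_{i+1}(X)=XP_i(X)+P_{i-1}(X)$, and your bookkeeping of degrees, parities, and the degree-$(i-3)$ coefficient carries that induction out in full. No gaps.
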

Further, recall that since $p$ is prime the degree of the extension $[\QQ[\lambda] : \QQ]$ is $\frac{p-1}{2}$ and hence $P_i(\lambda)$ gives the minimal expression in $\ZZ[\lambda]$. \newline

Given $0 \leq i \leq \frac{p-1}{2}$, we first notice that if $\overline{P_{i+1}(\lambda)}$ (resp. $\overline{P_{i-1}(\lambda)}$, for $i \neq 0$) is not an unit of $\overline{\mathcal{O}}$, then we directly get
\[ [\overline{P_i(\lambda)} : \overline{P_{i \pm 1}(\lambda)}] \neq [\overline{\lambda^2 +1}:\overline{1}] \]
Otherwise, we can write
\[  [\overline{\lambda^2 +1}:\overline{1}] = [\overline{(1+\lambda^2) P_{i\pm1}(\lambda)} : \overline{P_{i \pm 1}(\lambda)}] \]
and we hence need to show:
\[ \overline{(1+\lambda^2)P_{i\pm 1}(\lambda)} \neq \overline{P_i(\lambda)}. \]

For this, we will write both $\overline{(1+\lambda^2)P_{i\pm 1}(\lambda)}$ and $\overline{P_i(\lambda)}$ in the basis $\overline{1}, \overline{\lambda}, \dots, \overline{\lambda^{\frac{p-3}{2}}}$ of $\overline{\mathcal{O}}$.\newline

First, if $i \leq \frac{p-3}{2}$, then the degree of $P_{i-1}(X) (X^2 + 1)$ is $i < \frac{p-1}{2}$ and hence the minimal expression of $P_{i-1}(\lambda) (\lambda^2 + 1)$ is a monic polynomial in $\lambda$ of degree $i$, whereas $P_i(\lambda)$ is a monic polynomial of degree $i-1$ in $\lambda$, and hence they are different, even after reduction modulo two. The same holds for $P_{i+1}(X) (X^2 + 1)$ for $i \leq \frac{p-7}{2}$.\newline

Hence it remains to compare:
\begin{enumerate}
\item $i = \frac{p-1}{2}$, $\overline{(1+ \lambda^2)P_{\frac{p-3}{2}}(\lambda)}$ and $\overline{P_{\frac{p-1}{2}}(\lambda)}$,
\item $i = \frac{p-5}{2}$, $\overline{(1+ \lambda^2)P_{\frac{p-3}{2}}(\lambda)}$ and $\overline{P_{\frac{p-5}{2}}(\lambda)}$.
\item $i = \frac{p-3}{2}$, $\overline{(1+ \lambda^2)P_{\frac{p-1}{2}}(\lambda)}$ and $\overline{P_{\frac{p-3}{2}}(\lambda)}$,
\item $i = \frac{p-1}{2}$, $\overline{(1+ \lambda^2)P_{\frac{p+1}{2}}(\lambda)}$ and $\overline{P_{\frac{p-1}{2}}(\lambda)}$,
\end{enumerate}

\begin{enumerate}
\item[(1-2)] From Lemma \ref{lem:properties_Pi}, we have
\begin{align*} (\lambda^2+1)P_{\frac{p-3}{2}}(\lambda) &= (\lambda^2+1)(\lambda^{\frac{p-5}{2}} + \frac{p-7}{2} \lambda^{\frac{p-9}{2}} + \cdots)\\
& = \lambda^{\frac{p-1}{2}} + \frac{p-5}{2} \lambda^{\frac{p-5}{2}} + \cdots
\end{align*} 
But we also have, from Lemma \ref{lem:polynome_cyclotomique}, that
\[ \lambda^{\frac{p-1}{2}} = \lambda^{\frac{p-3}{2}} + \frac{p-3}{2} \lambda^{\frac{p-5}{2}} + \cdots \]
and hence $(\lambda^2+1)P_{\frac{p-3}{2}}(\lambda)$ is a monic polynomial in $\lambda$ of degree $\frac{p-3}{2}$, whose coefficient of degree $\frac{p-5}{2}$ is $\frac{p-3}{2} + \frac{p-5}{2} = p-4$, which is odd. In particular, $\overline{(\lambda^2 + 1) P_{\frac{p-3}{2}}(\lambda)}$ must be different from $\overline{P_{\frac{p-1}{2}}(\lambda)}$ (whose coefficient of degree $\frac{p-5}{2}$ is zero) and from $\overline{P_{\frac{p-5}{2}}(\lambda)}$ (whose coefficients of both degree $\frac{p-3}{2}$ and $\frac{p-5}{2}$ are zero).
\item[(3)] Similarly, we obtain again from Lemma \ref{lem:properties_Pi} that
\begin{align*}
(\lambda^2+1)P_{\frac{p-1}{2}}(\lambda) &= \left(\lambda^2+1\right)\left(\lambda^{\frac{p-3}{2}} + \frac{p-5}{2} \lambda^{\frac{p-7}{2}}+ \cdots\right)\\
&= \lambda^{\frac{p+1}{2}} + \frac{p-3}{2} \lambda^{\frac{p-3}{2}} + \cdots
\end{align*}
But we also have 
\begin{align*} 
\lambda^{\frac{p+1}{2}} &= \lambda^{\frac{p-1}{2}} + \frac{p-3}{2} \lambda^{\frac{p-3}{2}} + \cdots \\
&= \left(\frac{p-3}{2}+1\right) \lambda^{\frac{p-3}{2}} + \cdots
\end{align*}
and hence the coefficient of degree $\frac{p-3}{2}$ of $(\lambda^2+1)P_i(\lambda)$ is $p-2$, which is odd, whereas this coefficient is zero (hence even) for $P_{\frac{p-3}{2}}(\lambda)$.
\item[(4)] Since $P_{\frac{p+1}{2}}(X) = P_{\frac{p-1}{2}}(X)$ and since we can assume that $\overline{P_{\frac{p+1}{2}}(\lambda)}$ is an unit of $\overline{\mathcal{O}}$, we cannot have $\overline{(1+\lambda^2)P_{\frac{p+1}{2}}(\lambda)} = \overline{P_{\frac{p-1}{2}}(\lambda)}$ as $\overline{\lambda^2 + 1} \neq \overline{1}$
\end{enumerate}
We finally conclude that $\overline{[\lambda^2+1:1]}$ does not belong to the orbit of $\overline{[1:0]}$, proving Proposition \ref{prop:lambda2}.

\subsection{Exhibiting a non-periodic separatrix.}\label{sec:the_separatrix}
We now prove Theorem \ref{theo:prime} by showing that the direction of the separatrix in the statement of Theorem \ref{theo:prime} has a direction which reduces modulo two to an element of the orbit of $[\overline{\lambda^2+1}:\overline{1}]$. From Proposition \ref{prop:lambda2}, the direction of such a separatrix is not periodic, and hence it does not extend to a saddle connection, proving Theorem \ref{theo:prime}.\newline

Let $n \geq 7$ odd. We recall that we consider coordinates on the double regular $n$-gon as represented on $\RR^2$ with the origin placed at the central point of one of the $n$-gons, and with a vertex at the point of coordinates $(1,0)$. We consider the separatrix starting from the point of coordinates $(\cos \frac{2\pi}{n}, \sin \frac{2\pi}{n})$ with direction 
\[ (X,Y) = \left(1+2 \cos \left(\frac{2\pi}{n}\right)\left(1 + \cos \frac{\pi}{n}\right), 2 \sin \left(\frac{2\pi}{n}\right)\left(1 - \cos \frac{\pi}{n}\right)\right). \]
See Figure \ref{fig:direction_11_gon} for $n=11$.

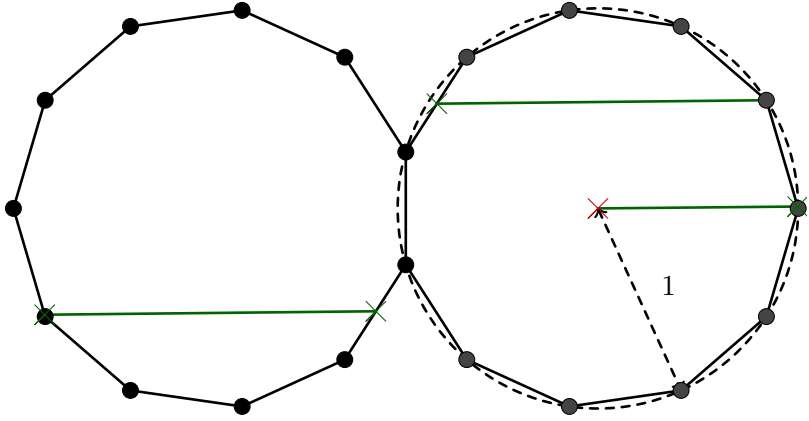
\begin{figure}[h]
\definecolor{ccqqqq}{rgb}{0.8,0,0}
\definecolor{qqwuqq}{rgb}{0,0.39215686274509803,0}
\definecolor{uuuuuu}{rgb}{0.26666666666666666,0.26666666666666666,0.26666666666666666}
\begin{tikzpicture}[line cap=round,line join=round,>=triangle 45,x=1cm,y=1cm, scale = 1.5]
\clip(-3.55,-1.35) rectangle (3.55,2.35);
\draw [line width=1pt] (0,0)-- (0,1);
\draw [line width=1pt] (0,1)-- (-0.5406408174555972,1.8412535328311805);
\draw [line width=1pt] (-0.5406408174555972,1.8412535328311805)-- (-1.450272812810115,2.256668545833067);
\draw [line width=1pt] (-1.450272812810115,2.256668545833067)-- (-2.4400942546910476,2.114353707559782);
\draw [line width=1pt] (-2.4400942546910476,2.114353707559782)-- (-3.1958438290453057,1.459492973614497);
\draw [line width=1pt] (-3.1958438290453057,1.459492973614497)-- (-3.4775763858867355,0.5);
\draw [line width=1pt] (-3.4775763858867355,0.5)-- (-3.195843829045306,-0.4594929736144966);
\draw [line width=1pt] (-3.195843829045306,-0.4594929736144966)-- (-2.440094254691048,-1.1143537075597816);
\draw [line width=1pt] (-2.440094254691048,-1.1143537075597816)-- (-1.4502728128101163,-1.256668545833067);
\draw [line width=1pt] (-1.4502728128101163,-1.256668545833067)-- (-0.5406408174555974,-0.8412535328311805);
\draw [line width=1pt] (-0.5406408174555974,-0.8412535328311805)-- (0,0);
\draw [line width=1pt] (0,0)-- (0.5406408174555972,-0.8412535328311805);
\draw [line width=1pt] (0.5406408174555972,-0.8412535328311805)-- (1.450272812810115,-1.2566685458330669);
\draw [line width=1pt] (1.450272812810115,-1.2566685458330669)-- (2.4400942546910476,-1.1143537075597818);
\draw [line width=1pt] (2.4400942546910476,-1.1143537075597818)-- (3.1958438290453057,-0.45949297361449715);
\draw [line width=1pt] (3.1958438290453057,-0.45949297361449715)-- (3.4775763858867355,0.5);
\draw [line width=1pt] (3.4775763858867355,0.5)-- (3.195843829045306,1.4594929736144966);
\draw [line width=1pt] (3.195843829045306,1.4594929736144966)-- (2.440094254691048,2.114353707559782);
\draw [line width=1pt] (2.440094254691048,2.114353707559782)-- (1.4502728128101163,2.256668545833067);
\draw [line width=1pt] (1.4502728128101163,2.256668545833067)-- (0.5406408174555974,1.8412535328311805);
\draw [line width=1pt] (0.5406408174555974,1.8412535328311805)-- (0,1);
\draw [line width=1pt,color=qqwuqq] (3.195843829045306,1.4594929736144966)-- (0.275687064782259,1.428977446237643);
\draw [line width=1pt,color=qqwuqq] (3.4727202586400665,0.5165384505944034)-- (1.7028436194446248,0.5);
\draw [line width=1pt,color=qqwuqq] (-0.2649537526733384,-0.41227608659353754)-- (-3.200699956291975,-0.44295452302009247);
\draw [line width=1pt,dash pattern=on 3pt off 3pt] (1.7028436194446248,0.5) circle (1.774732766442111cm);
\draw [line width=1pt,dash pattern=on 3pt off 3pt, to-to] (1.7028436194446248,0.5)-- (2.4400942546910476,-1.1143537075597818);
\draw (2.177777777777774,-0.009876543209877808) node[anchor=north west] {$1$};
\begin{scriptsize}
\draw [fill=black] (0,0) circle (2pt);
\draw [fill=black] (0,1) circle (2pt);
\draw [fill=black] (-0.5406408174555972,1.8412535328311805) circle (2pt);
\draw [fill=black] (-1.450272812810115,2.256668545833067) circle (2pt);
\draw [fill=black] (-2.4400942546910476,2.114353707559782) circle (2pt);
\draw [fill=black] (-3.1958438290453057,1.459492973614497) circle (2pt);
\draw [fill=black] (-3.4775763858867355,0.5) circle (2pt);
\draw [fill=black] (-3.195843829045306,-0.4594929736144966) circle (2pt);
\draw [fill=black] (-2.440094254691048,-1.1143537075597816) circle (2pt);
\draw [fill=black] (-1.4502728128101163,-1.256668545833067) circle (2pt);
\draw [fill=black] (-0.5406408174555974,-0.8412535328311805) circle (2pt);
\draw [fill=uuuuuu] (0.5406408174555972,-0.8412535328311805) circle (2pt);
\draw [fill=uuuuuu] (1.450272812810115,-1.2566685458330669) circle (2pt);
\draw [fill=uuuuuu] (2.4400942546910476,-1.1143537075597818) circle (2pt);
\draw [fill=uuuuuu] (3.1958438290453057,-0.45949297361449715) circle (2pt);
\draw [fill=uuuuuu] (3.4775763858867355,0.5) circle (2pt);
\draw [fill=uuuuuu] (3.195843829045306,1.4594929736144966) circle (2pt);
\draw [fill=uuuuuu] (2.440094254691048,2.114353707559782) circle (2pt);
\draw [fill=uuuuuu] (1.4502728128101163,2.256668545833067) circle (2pt);
\draw [fill=uuuuuu] (0.5406408174555974,1.8412535328311805) circle (2pt);
\draw [color=qqwuqq] (0.275687064782259,1.428977446237643)-- ++(-2.5pt,-2.5pt) -- ++(5pt,5pt) ++(-5pt,0) -- ++(5pt,-5pt);
\draw [color=qqwuqq] (-0.2649537526733384,-0.41227608659353754)-- ++(-2.5pt,-2.5pt) -- ++(5pt,5pt) ++(-5pt,0) -- ++(5pt,-5pt);
\draw [color=qqwuqq] (-3.200699956291975,-0.44295452302009247)-- ++(-2.5pt,-2.5pt) -- ++(5pt,5pt) ++(-5pt,0) -- ++(5pt,-5pt);
\draw [color=qqwuqq] (3.4727202586400665,0.5165384505944034)-- ++(-2.5pt,-2.5pt) -- ++(5pt,5pt) ++(-5pt,0) -- ++(5pt,-5pt);
\draw [color=ccqqqq] (1.7028436194446248,0.5)-- ++(-2.5pt,-2.5pt) -- ++(5pt,5pt) ++(-5pt,0) -- ++(5pt,-5pt);
\end{scriptsize}
\end{tikzpicture}
\caption{A separatrix on the double $11$-gon passing through one of the central points which does not extend to a saddle connection.}
\label{fig:direction_11_gon}
\end{figure}

To check that this separatrix passes through the central point of the right $n$-gon, one should notice that
\begin{align*}
X &= 1+2 \cos \left(\frac{2\pi}{n}\right) \left(1 + \cos \frac{\pi}{n}\right)\\
&= \left(\cos \frac{2\pi}{n} - \cos \frac{(n-1)\pi}{n}\right) + \left(\cos \frac{2\pi}{n} - \cos \frac{(n-3)\pi}{n}\right) +1
\end{align*}
and 
\begin{align*}
Y &= 2 \sin \left(\frac{2\pi}{n}\right) \left(1 - \cos \frac{\pi}{n}\right) \\
& = \left(\sin \frac{2\pi}{n} - \sin \frac{(n-1)\pi}{n}\right) -  \left(\sin \frac{(n-3)\pi}{n} - \sin \frac{2\pi}{n}\right)
\end{align*}

In particular, one can decompose the vector $(X,Y)$ as the sum of three vectors corresponding respectively to the holonomy vectors of
\begin{itemize}
\item the saddle connection on the right $n$-gon from the point of coordinates $\left(\cos \frac{2\pi}{n},\sin  \frac{2\pi}{n} \right)$ to the point of coordinates $\left(\cos \frac{(n-1)\pi}{n},\sin \frac{(n-1)\pi}{n} \right)$.
\item the saddle connection on the left $n$-gon from the point of coordinates $\left(\cos\frac{(n-3)\pi}{n},\sin \frac{(n-3)\pi}{n} \right)$ in the left $n$-gon (setting $(0,0)$ to be the central point of the left $n$-gon) to the point of coordinates $$\left(\cos \frac{(n+2)\pi}{n},\sin \frac{(n+2)\pi}{n} \right) = \left(-\cos \frac{2\pi}{n}, -\sin \frac{2\pi}{n} \right).$$
\item the separatrix on the right $n$-gon from the vertex of coordinates $(1,0)$ to the central point $(0,0)$.
\end{itemize}
This shows that the considered separatrix passes through the central point.\newline

We now express the direction of this separatrix in the staircase model. Recall that from the double $n$-gon to the staircase model we can apply the matrix

\[ P =\frac{1}{\sin \frac{(n-1)\pi}{2n}} \begin{pmatrix} \sin \frac{\pi}{n} & -\cos \frac{\pi}{n} + 1 \\ \sin \frac{\pi}{n} &  \cos \frac{\pi}{n} + 1 \end{pmatrix} \]

In particular, the direction $[X:Y]$ on the double $n$-gon is sent on its staircase model to 
\[ [\tilde{X}: \tilde{Y}] = \frac{\sin \frac{(n-1)\pi}{2n}}{\sin \frac{\pi}{n}} P \cdot [X:Y] = \begin{pmatrix} 1 & - \frac{\cos \left(\pi /n\right) + 1}{\sin (\pi /n)} \\ 1 & \frac{\cos \left(\pi /n\right) + 1}{\sin (\pi /n)} \end{pmatrix} [X:Y] \]
We compute:
\begin{align*}
\tilde{X} & = X -\frac{\cos \left(\pi /n\right) + 1}{\sin (\pi /n)} Y\\
& = -1 - 6 \cos \left( \frac{\pi}{n} \right) + 4 \cos^2 \left(\frac{\pi}{n}\right) + 8 \cos^3 \left(\frac{\pi}{n} \right)\\
& = -1 -3 \lambda + \lambda^2 +\lambda^3
\end{align*}
and 
\begin{align*}
\tilde{Y} & = X + \frac{\cos \left(\pi /n\right) + 1}{\sin (\pi /n)} Y\\
& = -1 + 2 \cos \left(\frac{\pi}{n}\right) + 4 \cos^2 \left(\frac{\pi}{n}\right) \\
& = -1 + \lambda + \lambda^2
\end{align*}
We want to show that the direction $[-1 -3 \lambda + \lambda^2 +\lambda^3: -1 + \lambda + \lambda^2]$ is not periodic. Up to the action of $TST^{-1}$, this is equivalent to showing that the direction $[-1 -\lambda^2: -1 - 2 \lambda]$ is not periodic. Reducing this direction modulo two gives $[\overline{1 + \lambda^2} : \overline{1}]$ which, if $n$ is a prime number, does not belong to the orbit of $[\overline{1}:\overline{0}]$ by Proposition \ref{prop:lambda2}. This implies, as required, that the direction $[\tilde{X}: \tilde{Y}]$ is not periodic on the staircase model, or equivalently that the direction $[X:Y]$ is not periodic on the double regular $n$-gon. The chosen separatrix does not extend to a saddle connection, and this finishes the proof of Theorem \ref{theo:prime}.

\begin{Rema}
Let us highlight that the only step where we need $n$ to be a prime number is to certify that $[\overline{\lambda^2 +1}:\overline{1}]$ does not belong to the orbit of $[\overline{1}:\overline{0}]$. As we already mentioned, one can check numerically that this result is true for $17 \leq n \leq 201$, independently of whether $n$ is a prime number or not. In particular the separatrix of Theorem \ref{theo:prime} does not extend to a saddle connection for these values of $n$.
\end{Rema}

\bibliographystyle{alpha}
\bibliography{bibli}
\end{document}